\newcommand{\abs}{\vspace{12pt}}
\DeclareMathOperator{\vol}{vol}
\DeclareMathOperator{\id}{id}
\DeclareMathOperator{\const}{const.}
\DeclareMathOperator{\Iso}{Iso}
\DeclareMathOperator{\grad}{grad}
\DeclareMathOperator{\Hess}{Hess}
\DeclareMathOperator{\diag}{diag}
\DeclareMathOperator{\norm}{{{\|.\|}}}
\DeclareMathOperator{\la}{\langle}
\DeclareMathOperator{\ra}{\rangle}
\newcommand{\R}{\mathbb{R}}
\newcommand{\Z}{\mathbb{Z}}
\newcommand{\N}{\mathbb{N}}
\newcommand{\C}{\mathbb{C}}
\newcommand{\g}{\mathfrak{g}}
\renewcommand{\g}{\gamma}
\newcommand{\LL}{\mathcal{L}}
\renewcommand{\H}{\mathcal{H}}
\newcommand{\RR}{\mathcal{R}}
\newcommand{\J}{\mathcal{J}}
\newcommand{\M}{\mathcal{M}}
\newcommand{\G}{\mathcal{G}}
\newcommand{\A}{\mathcal{A}}
\newcommand{\e}{\varepsilon}
\newcommand{\del}{\xi}
\newcommand{\Om}{\Omega}
\newcommand{\al}{\alpha}
\newcommand{\om}{\omega}
\newcommand{\lam}{\lambda}
\newcommand{\sig}{\sigma}
\theoremstyle{plain}
\newtheorem{defn}{Definition}[section]
\newtheorem{lemma}[defn]{Lemma}
\newtheorem{prop}[defn]{Proposition}
\newtheorem{thm}[defn]{Theorem}
\newtheorem{cor}[defn]{Corollary}
\theoremstyle{definition}
\newtheorem{remark}[defn]{Remark}
\newcommand{\mane}{Ma\~n\'e}
\newcommand{\Poincare}{Poincar\'e }
\def\BibTeX{{\rm B\kern-.05em{\sc i\kern-.025em b}\kern-.08em
    T\kern-.1667em\lower.7ex\hbox{E}\kern-.125emX}}
\begin{document}

\hypersetup{pdftitle = {Minimal rays on surfaces of genus greater than one}, pdfauthor = {Jan Philipp Schr\"oder}}

\title[Minimal rays on surfaces]{Minimal rays on surfaces \\ of genus greater than one}
\author[J. P. Schr\"oder]{Jan Philipp Schr\"oder}
\address{Faculty of Mathematics \\ Ruhr University \\ 44780 Bochum \\ Germany}
\email{\url{jan.schroeder-a57@rub.de}}
\keywords{Finsler metrics, minimal geodesics, surface of higher genus, Busemann functions, weak KAM solutions}

\begin{abstract}
For Finsler metrics (no reversibility assumed) on closed orientable surfaces of genus greater than one, we study the dynamics of minimal rays and minimal geodesics in the universal cover. We prove in particular, that for almost all asymptotic directions the minimal rays with these directions laminate the universal cover and that the Busemann functions with these directions are unique up to adding constants. Moreover, using a kind of weak KAM theory, we show that for almost all types of minimal geodesics in the sense of Morse, there is precisely one minimal geodesic of this type.
\end{abstract}

\maketitle


\section{Introduction and main results}

We begin by fixing some notation. We assume throughout the paper that $M$ is a closed orientable surface of genus $>1$. On $M$, there exists a (hyperbolic) Riemannian metric $g_h$ of constant curvature $-1$. The Riemannian universal cover of $M$, denoted by $(X,g_h)$, is identified with the \Poincare disc model, i.e. $X=\{z\in\C : |z|<1\}$ and $(g_h)_z(v,w) = 4\cdot(1-|z|^2)^{-2}\la v,w\ra_{euc}$, where the geodesics $\g\subset X$ are circle segments meeting $S^1=\{z\in\C : |z|=1\}$ orthogonally. Let $d_h$ be the distance function on $X$ induced by $g_h$. We write $\Gamma\leq\Iso(X,g_h)$ for the group of deck transformations $\tau:X\to X$ with respect to the covering $X\to M$, which extend to naturally to the ``boundary at infinity'' $S^1$.

Let $\G$ be the set of all oriented, unparametrized geodesics $\g\subset X$ with respect to $g_h$. We can think of $\G$ as $S^1\times S^1-\diag$, associating to $\g\in\G$ its pair of endpoints $(\g(-\infty),\g(\infty))$ on $S^1$. Moreover, set
\[ \G_\pm(\del) := \{\g\in \G : \g(\pm\infty)=\del \}. \]

We consider any Finsler metric $F:TX\to\R$, which is assumed to be invariant under $\Gamma$ (cf. Definition \ref{def finsler}). We write $SX=\{F=1\}\subset TX$ and $c_v:\R\to X$ for the geodesic with respect to $F$ defined by $\dot c_v(0)=v$.

The main object of interest in this paper are rays and minimal geodesics, that is geodesics $c:[0,\infty)\to X, ~ c:\R\to X$, respectively that minimize the length with respect to $F$ between any of their points. In \cite{morse}, H. M. Morse studied the global behavior of minimal geodesics and showed that minimal geodesics lie in tubes around hyperbolic geodesics $\g\in\G$ of finite width $D$, where $D$ is a constant depending only on $F$ and $g_h$. In particular, minimal geodesics $c:\R\to X$ have well-defined endpoints $c(\pm\infty)$ at infinity, i.e. on $S^1$; the analogous result holds for rays. For $\del\in S^1, \g\in\G$ set
\begin{align*}
\RR_+(\del) & := \{v \in SX ~|~ c_v:[0,\infty)\to X \text{ is a ray, } c(\infty)=\del \}, \\
\M(\g) & := \{v \in SX ~|~ c_v:\R\to X \text{ is a minimal geodesic, } c(\pm\infty)=\g(\pm\infty) \}.
\end{align*}

Morse studied in particular the behavior of minimal geodesics in $\RR_+(\del)$, where $\del$ is fixed under a non-trivial group element $\tau\in\Gamma$; we will recall these results in Subsection \ref{section periodic}. However, Morse left open the finer structure in the asymptotic directions which are not fixed by an element of $\Gamma$ and the author is not aware of any other work in the literature in this direction. The purpose of this paper is to fill this gap, i.e. to study the structure of $\RR_+(\del)$ for general $\del\in S^1$. While we are still not able to give the structure for all $\del$, we will be able to do it for ``most'' $\del$.

Another novelty in this paper, compared to the work of Morse, is the use of Finsler instead of Riemannian metrics. It was observed by E. M. Zaustinsky \cite{zaustinsky}, that the results of Morse carry over to these much more general systems. Moreover, it is known that Finsler metrics can be used to describe the dynamics of arbitrary Tonelli Lagrangian systems in high energy levels, cf. \cite{contreras1}.

The sets $\M(\g)$ are bounded by two particular minimal geodesics. The following lemma is Theorem 8 of \cite{morse}.

\begin{lemma}[Bounding geodesics]\label{bounding geodesics morse}
for $\g\in\G$, there are two particular non-intersecting, minimal geodesics $c_\g^0,c_\g^1$ in $\M(\g)$, such that all minimal geodesics in $\M(\g)$ lie in the strip in $X$ bounded by $c_\g^0,c_\g^1$.
\end{lemma}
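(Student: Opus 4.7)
The plan is to introduce a natural linear order on $\M(\g)$ and then produce $c_\g^0, c_\g^1$ as the infimum and supremum with respect to this order, the key analytic input being a length-exchange argument that rules out transverse crossings and an Arzel\`a--Ascoli argument for the passage to the limit.

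First I would show that two minimal geodesics $c_1, c_2 \in \M(\g)$ cannot cross transversally. Suppose they did; using Morse's tube theorem all of $\M(\g)$ lies in a tube of width $D$ around $\g$, so we may take a ``first'' crossing point $p$ and a ``last'' crossing point $q$ (along, say, the parameterization of $c_1$). Between $p$ and $q$ both pieces are minimizing and have the same $F$-length. Swapping one subsegment into the other produces a competitor curve of the same length as $c_1$ from $c_1(-T)$ to $c_1(T)$ which has a corner at $p$ or $q$. Since any genuine corner can be shortened in a small neighborhood by a smoothing argument (using the strict convexity of $F$ on fibers, i.e.\ the Finsler Legendre condition), this contradicts the minimality of $c_1$. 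Hence any two elements of $\M(\g)$ are either disjoint or they coincide as sets; in either case the natural ordering of components of $X \setminus c_i$ induced by the orientations of $\g$ and of $X$ gives a linear order on $\M(\g)$.

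Next I would fix an auxiliary hyperbolic geodesic $\sigma$ transverse to $\g$ and parametrize each $c\in\M(\g)$ by the $\sigma$-coordinate $t(c)\in\R$ at which $c\cap\sigma$ occurs (since $c$ stays in the $D$-tube around $\g$, such a crossing exists and is unique by the non-crossing property; the image set $\{t(c) : c\in \M(\g)\}$ is bounded because of the tube width $D$). Set $t^0 = \inf t(c)$ and $t^1 = \sup t(c)$ and pick sequences $c_n^i$ with $t(c_n^i)\to t^i$. By $\Gamma$-invariance of $F$ and the fact that the $c_n^i$ all pass through a bounded region, Arzel\`a--Ascoli gives a subsequential limit $c_\g^i:\R\to X$ in the $C^1$-topology, which is a geodesic because geodesics for $F$ form a closed condition, and which is still minimal since minimality passes to limits (any length defect on a compact arc of $c_\g^i$ would persist on the approximants).

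Finally I would check that $c_\g^0, c_\g^1\in\M(\g)$, i.e.\ that their endpoints at infinity are $\g(\pm\infty)$. This is automatic from Morse's tube lemma: each $c_\g^i$ remains in the $D$-tube around $\g$ and hence has the same endpoints at infinity as $\g$. The ordering on $\M(\g)$ and the choice of $t^0, t^1$ as extrema ensure that every $c\in\M(\g)$ lies in the closed strip between $c_\g^0$ and $c_\g^1$, and the non-crossing property guarantees that $c_\g^0\cap c_\g^1=\emptyset$ (either they are disjoint, or they coincide, in which case $\M(\g)$ is a single curve and the statement is trivial).

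The main obstacle is the length-exchange/no-crossing step: one must justify that the swapped curve with a corner can be strictly shortened in a Finsler (non-reversible) setting. The standard proof in the Riemannian case uses the first variation formula together with positive definiteness of the metric; for Finsler metrics the same argument works but requires the strict convexity of $F|_{T_xX}$ at every $x$, which is built into the definition of a Finsler metric. All other steps (boundedness via Morse's tube, Arzel\`a--Ascoli, closure of minimality under uniform limits) are standard.
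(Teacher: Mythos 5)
The paper offers no proof of this lemma at all: it is quoted verbatim as Theorem 8 of Morse \cite{morse}. Your proposal therefore has to stand on its own, and it contains a genuine gap at its very first step. You claim that two minimal geodesics $c_1,c_2\in\M(\g)$ cannot cross transversally, and you justify this by exchanging subsegments between a ``first'' and a ``last'' crossing point. That argument only excludes \emph{two or more} transverse crossings; when $c_1$ and $c_2$ cross exactly once you have $p=q$ and there is nothing to exchange. A single crossing does occur in general: in the periodic case (Theorem \ref{morse periodic}\,(4)), between two neighboring periodic minimal geodesics $c_0,c_1$ there are heteroclinics $c_+$ (asymptotic to $c_0$ at $-\infty$ and to $c_1$ at $+\infty$) and $c_-$ with the opposite behaviour; both belong to $\M(\g)$, and a separation argument in the strip between $c_0$ and $c_1$ forces them to intersect, necessarily transversally. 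This is precisely why the paper asserts only that $\pi(\M_{per}(\g)\cup\M_+(\g))$ and $\pi(\M_{per}(\g)\cup\M_-(\g))$ are laminations, and not $\pi\M(\g)$ itself. Once pairwise disjointness fails, your linear order on $\M(\g)$ is not defined, the coordinate $t(c)$ on the transversal $\sigma$ does not order the geodesics consistently (and the asserted uniqueness of $c\cap\sigma$ is itself unjustified, since it was derived from the non-crossing claim), and the concluding step collapses: a geodesic $c$ with $t^0\le t(c)\le t^1$ could still cross $c_\g^0$ once and leave the strip, so ``all minimal geodesics lie between $c_\g^0$ and $c_\g^1$'' does not follow.

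What does survive is the exchange/corner-shortening argument itself (valid for non-reversible Finsler metrics by strict fibrewise convexity of $F$), which yields that two elements of $\M(\g)$ cross \emph{at most once}, and the Arzel\`a--Ascoli extraction of limits of minimal geodesics inside the Morse tube. The missing content of Morse's Theorem 8 is the construction of the extremal geodesics despite the possible single crossings: one works with the closed set $\pi\M(\g)$ inside the tube and shows that the boundary of each of the two unbounded complementary components is the image of a single minimal geodesic of the class, using the ``at most one crossing'' statement together with the fact that both ends of every $c\in\M(\g)$ converge to $\g(\pm\infty)$ (so a geodesic with a point strictly outside the candidate bounding geodesic would be forced to cross it twice). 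As written, the proposal does not establish the lemma.
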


As a rule, we will always assume that $c_\g^1$ lies left of $c_\g^0$ with respect to the orientation of $\g$.

We can now state our first result, saying that for most asymptotic directions $\del\in S^1$, the bounding geodesics $c_\g^i$ of the various $\g\in\G_+(\del)$ do not intersect. The proof makes use of weak KAM theory, but does not rely on the group action by $\Gamma$. In fact, Theorem \ref{thm countable unstable} and its Corollary \ref{cor thm countable unstable} hold for any Finsler metric $F$ on $X$, which is uniformly equivalent to the norm of $g_h$ (cf. Definition \ref{def finsler}).

\begin{thm}\label{thm countable unstable}
For all but countably many $\del\in S^1$, the set
\[ L(\del) := \bigcup\{c_\g^i(\R) : \g\in \G_+(\del), i = 0,1\} \]
is a lamination of $X$ (i.e. no curves in $L(\del)$ intersect transversely).
\end{thm}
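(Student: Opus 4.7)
The plan is to recast the bounding geodesics $c_\g^i$ as calibrated curves of Busemann functions in direction $\del$ via weak KAM theory, and then to reduce the theorem to the statement that the Busemann function in direction $\del$ is unique up to additive constants for all but countably many $\del$. Fix a base point $o\in X$ and, for each $\del\in S^1$, let $B(\del)$ denote the set of Lipschitz functions on $X$ of the form $b(x)=\lim_n(d_F(x,x_n)-d_F(o,x_n))$ for sequences $x_n\to\del$, where $d_F$ denotes the Finsler distance. Since $F$ is uniformly equivalent to the norm of $g_h$, standard Finslerian weak KAM theory applies: each $b\in B(\del)$ is a viscosity solution of the Finsler Hamilton--Jacobi equation, and a minimal geodesic $c:\R\to X$ satisfies $c(+\infty)=\del$ if and only if it is calibrated by some $b\in B(\del)$, in the sense that $b(c(s))-b(c(t))=t-s$ for all $s\le t$. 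In particular every bounding geodesic $c_\g^i$ with $\g\in\G_+(\del)$ is calibrated by some $b\in B(\del)$.

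Two minimal geodesics calibrated by one and the same $b\in B(\del)$ cannot meet transversely: at an interior transverse crossing, $b$ would admit two distinct Legendre-dual vectors both of Finsler-dual norm one, contradicting the strict convexity of $F$. Consequently, if $B(\del)$ consists of a single function modulo additive constants, then the entire family $\{c_\g^i:\g\in\G_+(\del),\,i=0,1\}$ is calibrated by that single Busemann function and $L(\del)$ is automatically a lamination. The theorem therefore reduces to showing that the set of $\del\in S^1$ for which $B(\del)\pmod{\R}$ has more than one element is at most countable.

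For the countability I would exploit the cyclic order of $S^1$ together with Morse's tube lemma. Using the left/right ordering of minimal geodesics sharing a common endpoint (provided by the orientation of the hyperbolic geodesic inside the Morse tube of Lemma \ref{bounding geodesics morse}), one selects two extremal Busemann functions $b_\del^-,b_\del^+\in B(\del)$ that depend monotonically on $\del$ with respect to the cyclic order on $S^1$. Evaluating $b_\del^{\pm}$ at a countable dense set of test points in $X$ yields countably many real-valued monotone functions of $\del$, and by Froda's theorem each has at most countably many jump discontinuities. A failure of uniqueness, $b_\del^-\neq b_\del^+\pmod{\R}$, forces a jump of at least one of these monotone test functions at $\del$, so the bad set is a countable union of countable sets, hence countable.

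The main obstacle is precisely this last step. One must construct the extremal Busemann functions $b_\del^{\pm}$ with a genuine monotone dependence on $\del$, show that $B(\del)\pmod{\R}$ is determined (as a convex hull, say) by its two extremal elements, and verify that non-coincidence of these extremes can be detected by a countable collection of test evaluations. Each piece ultimately reduces to an instance of the basic principle that two distinct minimal $F$-geodesics intersect at most once, together with the Morse tube geometry controlling how calibrated rays fit in strips around hyperbolic geodesics, but the combinatorial bookkeeping on $S^1$ and the passage from a pointwise monotone functional to the full convex set $B(\del)\pmod{\R}$ require some care.
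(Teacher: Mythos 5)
Your first step is sound and matches the paper's machinery: every bounding geodesic is calibrated by its own Busemann function, and two geodesics calibrated by the \emph{same} weak KAM solution cannot cross transversely (this is Lemma \ref{lemma fathi} and the graph property of $\J_+(u)$). The gap is in the reduction and in the countability mechanism. First, you reduce the theorem to the statement that $B(\del)$ is a single function mod constants for all but countably many $\del$. That is strictly stronger than the lamination property (for a periodic direction with a genuine flat strip, $L(\del)$ is a lamination but the weak KAM solution is not unique), and the paper only obtains uniqueness of Busemann functions for Lebesgue--almost every $\del$, via $w(\del)=0$, which requires the $\Gamma$-action and ergodicity of the hyperbolic geodesic flow; Theorem \ref{thm countable unstable} is explicitly proved without the group action, so you are reducing to a claim that is not known (and quite possibly false) at the co-countable level. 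What is actually needed, and what the paper proves, is the weaker statement that all bounding geodesics with endpoint $\del$ are forward unstable, i.e.\ lie in $\A_+(\del)=\bigcap_{u\in\H_+(\del)}\J_+(u)$.

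Second, the countability step is not substantiated. There is no natural monotone dependence of $\del\mapsto b_\del^{\pm}(x)$: already for $g_h$ itself the Busemann function at a fixed test point is a smooth, non-monotone function of $\del\in S^1$, so Froda's theorem has no purchase; you correctly flag this as the main obstacle, but it is where the proof lives. The paper's countability comes from an entirely different source: fix a countable dense set of \emph{backward} directions $\del_n$ and a backward weak KAM solution $u\in\H_-(\del_n)$; its calibrated set $\J(u)$ laminates $X$, and for each forward endpoint $\g(\infty)$ the two bounding calibrated geodesics $c^0_{\g,u},c^1_{\g,u}$ either coincide or bound an open strip; these strips are pairwise disjoint open subsets of the $\sigma$-compact surface $X$, hence there are only countably many of them (Lemma \ref{countable 1}). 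Outside these countably many exceptional endpoints the coinciding geodesic is forward unstable, and intersecting over the $\del_n$ and passing to limits (using closedness of $\A_+(\del)$) shows both bounding geodesics $c_\g^i$ are forward unstable for every $\g\in\G_+(\del)$ whenever $\del$ avoids a countable set. To repair your argument you would need to replace the monotonicity/Froda step with a disjointness argument of this kind, or else prove the much stronger co-countable uniqueness of Busemann functions, for which no mechanism is offered.
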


It would be desirable to know if the set $L(\del)$ is a lamination of $X$ for all $\del\in S^1$; we would then gain more insight in the structure of $\RR_+(\del)$ for all $\del$, cf. Theorem \ref{thm A} below.

From this we will deduce the following.

\begin{cor}[Uniqueness of minimal geodesics]\label{cor thm countable unstable}
For almost all $\g\in\G\cong S^1\times S^1-\diag$ with respect to the Lebesgue measure, the set $\M(\g)$ consists of precisely one minimal geodesic.
\end{cor}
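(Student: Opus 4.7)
The plan is to apply Fubini's theorem on $\G\cong S^1\times S^1\setminus\diag$ via the endpoint parametrization $\g\mapsto(\g(-\infty),\g(\infty))$, so that the fiber of the second projection over $\del\in S^1$ is $\G_+(\del)$. The goal is to show that for every $\del$ outside the countable exceptional set of Theorem \ref{thm countable unstable}, only countably many $\g\in\G_+(\del)$ satisfy $c_\g^0\ne c_\g^1$. Fubini then gives that $\{\g\in\G:c_\g^0\ne c_\g^1\}$ has Lebesgue measure zero in $\G$; and Lemma \ref{bounding geodesics morse} implies that for every remaining $\g$, any element of $\M(\g)$ has image on the single curve $c_\g^0=c_\g^1$. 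Since a Finsler geodesic is determined by its initial position and direction, $\M(\g)$ is then a single orbit of the geodesic flow, i.e.\ a single minimal geodesic in the intended sense.

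For the fiberwise step, fix such a $\del$ and, for $\g\in\G_+(\del)$, let $S(\g)\subset X$ be the open strip bounded by $c_\g^0$ and $c_\g^1$; this is non-empty exactly when $c_\g^0\ne c_\g^1$. I claim the $S(\g)$ are pairwise disjoint as $\g$ varies in $\G_+(\del)$, so by second countability of $X$ only countably many can be non-empty. Two inputs drive the disjointness. First, by Morse's tube result each $c_\g^i$ lies in a tube of bounded width about the hyperbolic geodesic $\g$, so in the compactification $\overline X=X\cup S^1$ one has $\overline{S(\g)}\cap S^1=\{\g(-\infty),\del\}$. Second, the ``no transverse intersection'' conclusion of Theorem \ref{thm countable unstable} upgrades to ``no intersection at all'' between distinct leaves of $L(\del)$: a tangential contact of two Finsler geodesics at an interior point would, by uniqueness of the geodesic initial value problem, force them to coincide, contradicting the different $-\infty$-endpoints of distinct elements of $\G_+(\del)$.

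Combining these facts: assume $\g\ne\g'\in\G_+(\del)$ and $p\in S(\g)\cap S(\g')$. Since $\g'(-\infty)\notin\overline{S(\g)}$, one can pick $q\in S(\g')\setminus\overline{S(\g)}$ (a point of $S(\g')$ close enough to $\g'(-\infty)$ in $\overline X$), and a continuous path in $S(\g')$ from $p$ to $q$ must cross $\partial S(\g)=c_\g^0\cup c_\g^1$, producing a point of some $c_\g^i$ inside the open strip $S(\g')$. The leaf $c_\g^i$ then enters $S(\g')$ but must ultimately leave toward $\g(-\infty)\notin\overline{S(\g')}$, and therefore has to cross $c_{\g'}^0$ or $c_{\g'}^1$, contradicting the strengthened disjointness of leaves of $L(\del)$. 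The main obstacle is precisely this geometric step, with the delicate handling of the closures at infinity and of the pinching of the strips at the common point $\del$; once disjointness is in hand, the Fubini argument and the deduction $c_\g^0=c_\g^1\Rightarrow|\M(\g)|=1$ (as a geodesic flow orbit) are routine.
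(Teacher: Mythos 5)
Your proposal is correct and follows essentially the same route as the paper: restrict to the co-countable set of $\del_+$ from Theorem \ref{thm countable unstable}, show that in each fibre $\G_+(\del_+)$ only countably many $\g$ have $c_\g^0\neq c_\g^1$ because the corresponding open strips are pairwise disjoint and $X$ is second countable, and then integrate via Fubini. The only (inessential) difference is that you derive the disjointness of the strips directly from the lamination property of $L(\del)$ by a topological argument in $\bar X$, whereas the paper gets it from the identification $c_\g^i=c_{\g,u}^i\in\J(u)$ and the graph property of calibrated sets (Lemma \ref{countable 1}).
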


For the statement of our next results, we need the following definitions.

\begin{defn}
A minimal ray $c:[0,\infty)\to X$ is called \emph{forward unstable}, if there is no minimal ray $c':[0,\infty)\to X$ with $c'(0)\in c(0,\infty)$ and $c'(\infty)=c(\infty)$, which is not a subray of $c$.
\end{defn}

\begin{remark}
\begin{itemize}
\item Instability of minimal geodesics was studied in a paper \cite{klingenberg} of W. Klingenberg, and some of our results appear in \cite{klingenberg}. However, \cite{klingenberg} contains errors and the results which we prove here, in particular the existence of unstable geodesics, remained open. Our work will be independent of \cite{klingenberg}.

\item It is easy to see (cf. Lemma \ref{partial instability bounding}) that the unique geodesic in $\M(\g)$ in Corollary \ref{cor thm countable unstable} is forward (and backward) unstable.
\end{itemize}
\end{remark}

\begin{defn}
For $\g\in\G, \del\in S^1$ define the \emph{(forward) width} of $\g, ~\del$, respectively by
\begin{align*}
w(\g)& := \liminf_{t\to\infty} d_h(c_\g^0(\R),c_\g^1(t)), \\
w(\del) &:= \sup \{ w(\g) : \g\in\G_+(\del) \}.
\end{align*}
\end{defn}

\begin{remark}\label{bem width def}
Due to the Morse Lemma (cf. Theorem \ref{morse lemma}), $w(\del)$ is bounded by the global constant $D<\infty$. As we will see, $w(\del)=0$ implies that all minimal rays in $\RR_+(\del)$ are forward unstable and in particular that the set $L(\del)$ in Theorem \ref{thm countable unstable} is a lamination of $X$, while $w(\g)=0$ implies the forward instability of all minimal geodesics in $\M(\g)$. Moreover, $w(\del)=0$ implies a uniqueness result for weak KAM solutions and Busemann functions. Hence, the main task in this paper is to show $w(\del)=0$ for directions $\del\in S^1$, which are not fixed by elements of $\Gamma$.
\end{remark}

While Theorem \ref{thm countable unstable} and its corollary did not depend on $\Gamma$, the following results rely strongly on the $\Gamma$-action. Assuming a certain behavior of a background geodesic $\g\in\G_+(\del)$ under $\Gamma$, we can calculate $w(\del)$.

\begin{thm}[Dense directions]\label{dense width 0}
If $\G_+(\del)$ contains a hyperbolic geodesic, which has a dense forward orbit in the hyperbolic unit tangent bundle of $M=X/\Gamma$, then $w(\del)=0$. In particular, for almost all $\del\in S^1$ with respect to the Lebesgue measure, we have $w(\del)=0$.
\end{thm}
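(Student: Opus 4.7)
The plan is to prove $w(\g_0)=0$ by showing $w:\G\to[0,D]$ is upper semi-continuous, then combining its $\Gamma$-invariance with density of the $\Gamma$-orbit of $\g_0$ in $\G$ and with Corollary \ref{cor thm countable unstable}. Density of the forward orbit of $\g_0$ in the hyperbolic unit tangent bundle of $M$ translates, via the identification of the hyperbolic unit tangent bundle of $X$ with the set of pointed oriented hyperbolic geodesics, to density of the $\Gamma$-orbit $\{\tau\g_0:\tau\in\Gamma\}$ in $\G\cong S^1\times S^1-\diag$.

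Granted upper semi-continuity and $\Gamma$-invariance of $w$, for any $\g\in\G$ pick $\tau_n\in\Gamma$ with $\tau_n\g_0\to\g$; then $w(\g)\geq\limsup_n w(\tau_n\g_0)=w(\g_0)$, so $w(\g_0)=\inf_\G w$. Corollary \ref{cor thm countable unstable} provides $\g^*\in\G$ with $\M(\g^*)$ a singleton; for such $\g^*$ one has $c_{\g^*}^0=c_{\g^*}^1$ and hence $w(\g^*)=0$. Thus $0\leq w(\g_0)\leq w(\g^*)=0$, i.e.\ $w(\g_0)=0$. Any other $\g\in\G_+(\del)$ is forward hyperbolically asymptotic to $\g_0$, and a tail of a dense orbit in the hyperbolic unit tangent bundle of $M$ is still dense (since the time-$T$ geodesic flow is a homeomorphism), so $\g$ also has dense forward orbit; the same argument yields $w(\g)=0$, and taking supremum over $\G_+(\del)$ gives $w(\del)=0$. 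The ``in particular'' statement follows from ergodicity of the hyperbolic geodesic flow on the hyperbolic unit tangent bundle of $M$ (Hopf's theorem) combined with Fubini's theorem in Hopf coordinates on $S^1\times S^1-\diag$: Liouville-a.e.\ pair of endpoints gives a geodesic with dense forward orbit, so $\G_+(\del)$ meets the hypothesis for Lebesgue-a.e.\ $\del$.

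The principal technical step is the upper semi-continuity of $w$. For $\g_n\to\g$ in $\G$, Morse's Lemma gives uniform $D$-tubes and Arzelà-Ascoli extracts a subsequence along which the bounding geodesics $c_{\g_n}^i$ converge (parametrized by hyperbolic arclength from a fixed basepoint) to minimal geodesics $c'^0,c'^1\in\M(\g)$ lying between $c_\g^0$ and $c_\g^1$. Pointwise convergence of hyperbolic distances together with the Fatou-type estimate $\limsup_n\liminf_t f_n(t)\leq\liminf_t\lim_n f_n(t)$ then yields
\[ \limsup_n w(\g_n) \leq \liminf_{t\to\infty} d_h(c'^0(\R),\,c'^1(t)). \]
Since the strip $\M(\g)$ is foliated by non-crossing minimal geodesics with a linear transverse ordering in which $c_\g^0,c_\g^1$ are extremal, one obtains the comparison $d_h(c'^0(\R),\,c'^1(t))\leq d_h(c_\g^0(\R),\,c_\g^1(t))$ for every $t$; taking liminf yields $\limsup_n w(\g_n)\leq w(\g)$. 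The principal obstacle is a careful verification of this monotonicity (inner distances dominated by the extremal one), which rests on the non-crossing of minimal geodesics in $\M(\g)$ and on choosing a consistent hyperbolic arclength parametrization across the strip.
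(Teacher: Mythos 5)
Your overall strategy (approximate by a geodesic $\g^*$ for which $\M(\g^*)$ is a singleton, supplied by Corollary \ref{cor thm countable unstable}, and transport the conclusion back via a semi-continuity property of the width plus $\Gamma$-invariance and ergodicity) is exactly the paper's strategy. However, the technical heart of your argument --- upper semi-continuity of $w$ along mere convergence $\g_n\to\g$ in $\G$ --- has a genuine gap. The quantity $w(\g_n)=\liminf_{t\to\infty}d_h(c_{\g_n}^0(\R),c_{\g_n}^1(t))$ is a \emph{tail} quantity along $\g_n$, whereas the convergence $c_{\g_n}^i\to c'^i$ you extract from Morse's Lemma and Arzel\`a--Ascoli is only locally uniform: for each fixed $t$ you control $f_n(t)=d_h(c_{\g_n}^0(\R),c_{\g_n}^1(t))$, but the liminf over $t\to\infty$ is determined by portions of $\g_n$ that leave every compact set on which the convergence holds. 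The ``Fatou-type estimate'' $\limsup_n\liminf_t f_n(t)\leq\liminf_t\lim_n f_n(t)$ is false in general: take $f_n(t)=0$ for $t\leq n$ and $f_n(t)=1$ for $t\geq n+1$; then the left side is $1$ and the right side is $0$. So nothing in your argument prevents $w(\g_n)$ from staying bounded away from $0$ while the bounding geodesics of $\g_n$ converge, on every compact set, to a pair that is very close together.

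The paper's Lemma \ref{width semi-cont} avoids exactly this problem by requiring the approximation to be implemented by a sequence $\tau_n\in\Gamma$ that is \emph{positive for} $\g$ (Definition \ref{def positive gamma}): points $x_n\in\g$ with $x_n\to\g(\infty)$ are moved by $\tau_n$ into a fixed compact set, so the limit geodesic $\g'$ genuinely records the forward-asymptotic behavior of $\g$, and one obtains the inequality in the useful direction, $w(\g(\infty))\leq i(\g')$ (note: $i$, the infimum over all $t$, of the \emph{limit}, bounding $w$ of the \emph{original}). Your hypothesis actually hands you such sequences for free: density of the forward orbit of $\dot\g_0$ in $SM$ gives, for the target $\g^*$, times $t_k\to\infty$ and deck transformations $\tau_k$ with $\tau_k\dot{\g}_0(t_k)$ convergent, i.e.\ $\tau_k$ is positive for $\g_0$ and $\tau_k\g_0\to\g^*$; then $w(\del)\leq i(\g^*)=0$. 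So the repair is to replace your semi-continuity lemma by the paper's version (or prove that version yourself); the rest of your proposal --- that every $\g\in\G_+(\del)$ inherits the dense forward orbit, and the ergodicity/Fubini argument for the ``in particular'' clause --- is sound and matches the paper.
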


Note that Theorem \ref{dense width 0} has the following analogue for the case where $M$ is the $2$-torus $\R^2/\Z^2$ (cf. \cite{bangert1} for the Riemannian, \cite{paper1} for the Finsler case): For all irrational rotation directions (which have full Lebesgue measure in $S^1$), the set of minimal geodesics with this direction is a lamination of the universal cover $\R^2$ and the weak KAM solutions with irrational directions are unique up to adding constants.

For completeness and to exemplify the concept of width, we state the following proposition, which follows directly from the results of Morse.

\begin{prop}[Periodic directions]\label{width periodic intro}
If $\g\in \G$ is an axis of a non-trivial group element $\tau\in\Gamma$ (i.e. $\tau \g=\g$), then
\begin{align*}
 w(\g) &  = w(\g(\infty)) = \inf_{t\in\R} d_h(c_\g^0(\R),c_\g^1(t)) , \\
 w(\g') & = 0 \qquad \forall \g'\in \G_+(\del)-\{\g\}.
\end{align*}
In particular, $w(\g(\infty))=0$ if and only if there is only one geodesic in $\M(\g)$. 
\end{prop}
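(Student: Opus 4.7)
The plan is to handle the three assertions of the proposition in turn, all three resting on the combination of $\tau$-equivariance of the whole minimal-geodesic picture with Morse's structural description of minimal rays asymptotic to a periodic point (Subsection~\ref{section periodic}).

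First I would derive the identity $w(\g)=\inf_{t\in\R}d_h(c_\g^0(\R),c_\g^1(t))$ from $\tau$-periodicity. Since $\tau\in\Gamma$ is orientation-preserving and fixes $\g$ setwise, it preserves each of the two sides of $\g$; by the uniqueness in Lemma~\ref{bounding geodesics morse}, each bounding geodesic $c_\g^i$ is $\tau$-invariant as an unparametrised curve, so in an $F$-arclength parametrisation $\tau c_\g^i(t)=c_\g^i(t+T_i)$ for some $T_i>0$. Because $\tau$ is a hyperbolic isometry, the function $\varphi(t):=d_h(c_\g^0(\R),c_\g^1(t))$ is continuous and $T_1$-periodic, so $\liminf_{t\to\infty}\varphi(t)=\inf_{t\in\R}\varphi(t)$.

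Next, to show $w(\g')=0$ for $\g'\in\G_+(\del)-\{\g\}$, I would appeal to Morse's classification of minimal rays with $\tau$-fixed asymptotic direction: every such ray either is contained in $\M(\g)$ or is forward-asymptotic to one of $c_\g^0,c_\g^1$. The two bounding geodesics $c_{\g'}^0,c_{\g'}^1$ of $\M(\g')$ have backward endpoint $\g'(-\infty)\neq\g(-\infty)$, so they are not in $\M(\g)$, and thus each is forward-asymptotic to one of $c_\g^0,c_\g^1$. Because $\g$ and $\g'$ share only the point $\del\in S^1$, the hyperbolic geodesic $\g'$ lies entirely on one side of $\g$, and the Morse tube of $\g'$ --- in which both $c_{\g'}^i$ are contained --- forces both to be asymptotic to the \emph{same} bounding geodesic $c_\g^j$ (the one on that side). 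Consequently $d_h(c_{\g'}^0(\R),c_{\g'}^1(t))\to 0$ as $t\to\infty$, giving $w(\g')=0$. Taking the supremum over $\G_+(\del)$ then yields $w(\del)=w(\g)$.

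Finally, the equivalence $w(\g(\infty))=0\iff|\M(\g)|=1$ comes almost for free: if $|\M(\g)|=1$ then $c_\g^0=c_\g^1$ and $w(\g)=0$ trivially, while if $w(\g)=0$ then by the first step the continuous $T_1$-periodic $\varphi$ attains $0$, so some point $c_\g^1(t_0)$ lies on $c_\g^0(\R)$; two non-crossing minimal geodesics with the same endpoints at infinity that meet at an interior point must coincide, so $c_\g^0=c_\g^1$ and by Lemma~\ref{bounding geodesics morse} all of $\M(\g)$ collapses to this single geodesic.

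The main obstacle is the \emph{side argument} in the second step: Morse's structural theorem supplies the dichotomy that $c_{\g'}^i$ is asymptotic to some $c_\g^j$, but the substantive point is that both $c_{\g'}^0$ and $c_{\g'}^1$ land on the same $c_\g^j$. This needs the observation that the Morse tube of a hyperbolic geodesic sharing only one endpoint with $\g$ lies, in the asymptotic forward direction, cleanly on one side of $\g$. Everything else reduces to $\tau$-equivariance and formal manipulations with hyperbolic distance.
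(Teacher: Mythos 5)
Your overall strategy is the same as the paper's: the first identity comes from the $\tau$-periodicity of $t\mapsto d_h(c_\g^0(\R),c_\g^1(t))$ (the bounding geodesics being $\tau$-invariant by Theorem \ref{morse periodic}(1)), the vanishing of $w(\g')$ comes from Morse's structure theorem for periodic directions, and the ``in particular'' is handled identically. Steps 1 and 3 of your argument are correct.

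There is, however, a gap in step 2. Theorem \ref{morse periodic}(2) says that a forward ray with direction $\del$ not belonging to $\M_{per}(\g)$ is asymptotic to \emph{some} periodic minimal geodesic in $\M_{per}(\g)$ --- not necessarily to one of $c_\g^0,c_\g^1$: there may be $\tau$-invariant minimal geodesics strictly inside the strip $S$ between them. So ``not in $\M(\g)$, hence forward-asymptotic to one of $c_\g^0,c_\g^1$'' does not follow as stated, and you must exclude the scenario in which $c_{\g'}^0$ is asymptotic to an interior periodic geodesic while $c_{\g'}^1$ is asymptotic to a bounding one, which would give $w(\g')>0$. Moreover, the geometric justification you offer for the ``same side'' claim is backwards: the Morse tubes of $\g$ and $\g'$ \emph{overlap} in the forward direction, since both geodesics converge to $\del$; it is in the \emph{backward} direction that $\g'$ separates from $\g$. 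The correct argument (the one implicit in the paper's proof of Corollary \ref{width periodic}) is: since $\g'(-\infty)\notin\{\g(-\infty),\g(\infty)\}$, for $t\ll 0$ both points $c_{\g'}^0(t),c_{\g'}^1(t)$ lie outside $S$, in the same connected component of $X-\pi\M_{per}(\g)$; by Theorem \ref{morse periodic}(3), applied to subrays, a forward ray with direction $\del$ starting in $X-\pi\M_{per}(\g)$ can never meet $\pi\M_{per}(\g)$, so both rays remain in that component and are therefore forward-asymptotic to the unique periodic geodesic on its boundary, namely the bounding geodesic $c_\g^j$ on that side of $\g$. This gives $d_h(c_{\g'}^0(\R),c_{\g'}^1(t))\to 0$, hence $w(\g')=0$, and the rest of your argument goes through.
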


A direct generalization of periodicity is recurrence; we say that $\g\in\G$ is forward recurrent, if the projection of $\dot \g$ has a forward recurrent orbit in the hyperbolic unit tangent bundle of $M=X/\Gamma$. For the case $w(\del)=0$ the structure of $\RR_+(\del)$ was explained in Remark \ref{bem width def} above. If $w(\del)>0$, we have the following results.

\begin{thm}[Recurrent directions]\label{thm intro recurrent}
Let $\del\in S^1$ and $w(\del)>0$.
\begin{enumerate}
 \item There are at most two forward recurrent geodesics in $\G_+(\del)$,
 
 \item If $\g\in \G_+(\del)$ is forward recurrent, then $w(\g)=w(\del)$ and there is a forward unstable geodesic in $\M(\g)$,

 \item If $\del$ satisfies the conclusion of Theorem \ref{thm countable unstable} (i.e. if the set of bounding geodesics $L(\del)$ with direction $\del$ is a lamination of $X$), then there can be at most one forward recurrent direction in $\G_+(\del)$ and if there is a forward recurrent direction $\g\in\G_+(\del)$, then
\begin{align*}
 w(\g) &  = w(\g(\infty)) = \inf_{t\in\R} d_h(c_\g^0(\R),c_\g^1(t)) , \\
 w(\g') & = 0 \qquad \forall \g'\in \G_+(\del)-\{\g\}.
\end{align*}
\end{enumerate}
\end{thm}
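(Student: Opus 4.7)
The plan is to combine three ingredients throughout: the $\Gamma$-equivariance $\tau c_\g^i = c_{\tau\g}^i$ of the bounding geodesics, their upper semicontinuous dependence on $\g\in\G$ (from Morse's lemma together with the uniform tube-width bound $D$), and recurrence, which furnishes sequences $t_n \to \infty$ and $\tau_n \in \Gamma$ with $\tau_n \dot\g(t_n) \to \dot\g(0)$ in the hyperbolic unit tangent bundle of $X$. A key consequence of recurrence is that $\tau_n \g' \to \g$ in $\G$ for every $\g' \in \G_+(\del)$, since the $\tau_n$ asymptotically behave as hyperbolic elements of $\Gamma$ with axis approaching $\g$ and translating toward $\g(-\infty)$; in particular $\tau_n \del \to \del$ and $\tau_n \g'(-\infty) \to \g(-\infty)$.

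For part~(2), equivariance gives $d_h(\tau_n c_\g^0(\R), \tau_n c_\g^1(t_n + s)) = d_h(c_\g^0(\R), c_\g^1(t_n + s))$, while $\tau_n c_\g^i \to c_\g^i$ locally uniformly transfers this distance function from $s$ near $t_n$ back to $s$ near $0$. Consequently the liminf defining $w(\g)$ is realized as the infimum over $t \in \R$. For the comparison $w(\g) \ge w(\g')$ with an arbitrary $\g' \in \G_+(\del)$, the $\Gamma$-invariance $w(\tau_n \g') = w(\g')$ combined with upper semicontinuity at the limit $\tau_n \g' \to \g$ yields $w(\g) = w(\del)$. For the existence of a forward unstable minimal geodesic in $\M(\g)$, I consider the transverse order on minimal geodesics in the closed strip between $c_\g^0$ and $c_\g^1$; if every element admitted a strictly larger neighbor agreeing with it at some positive time, iterating and pulling back by $\tau_n$ would produce an infinite transversely ordered family of pairwise disjoint minimal geodesics confined to a Morse tube of width at most $D$, contradicting this uniform bound.

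For part~(1), suppose three distinct forward recurrent $\g_1, \g_2, \g_3 \in \G_+(\del)$ with $\g_2$ strictly between $\g_1$ and $\g_3$ in the transverse order on $\G_+(\del)$. By part~(2), each $w(\g_i) = w(\del) > 0$, and (by the infimum equality) each strip has positive hyperbolic width at every height. The three strips remain transversely ordered for all time, and pulling back by the recurrence sequence $\tau_n$ of $\g_2$ brings $\g_1, \g_3$ arbitrarily close to $\g_2$ while preserving their positive widths and their transverse order. A quantitative packing argument, using the tube-width bound $D$ together with the horocyclic geometry of $\G_+(\del)$ and the common value $w(\g_i) = w(\del)$, rules out three such nested strips and leaves room for at most two.

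For part~(3), under the lamination hypothesis the bounding geodesics of different $\g \in \G_+(\del)$ are pairwise non-crossing. If distinct forward recurrent $\g, \g' \in \G_+(\del)$ both existed, part~(2) would give $w(\g) = w(\g') = w(\del) > 0$, and applying the recurrence sequence of $\g$ would push $\g'$ arbitrarily close to $\g$ while preserving the positive-width strips, forcing a transverse intersection of leaves of $L(\del)$ and contradicting the lamination. The formula for $w(\g)$ is then part~(2). Finally, if $\g' \ne \g$ in $\G_+(\del)$ had $w(\g') > 0$, compactness of the hyperbolic unit tangent bundle of $M$ together with the positive-width hypothesis would yield a $\Gamma$-translate subsequence of $\g'$ converging to a forward recurrent geodesic in $\G_+(\del)$ distinct from $\g$, contradicting uniqueness; hence $w(\g') = 0$. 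The main technical obstacle is the packing argument in part~(1): three positive-width strips of equal width $w(\del)$, transversely ordered and pulled back to a bounded region near a common limit geodesic, must be shown incompatible with the tube-width bound $D$, and working out the precise quantitative incompatibility is where the most careful analysis will concentrate.
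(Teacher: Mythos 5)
Your computation of the width identities in part~(2) --- pulling back by a recurrence sequence $\tau_n$ positive for $\g$ with $\tau_n\g\to\g$, using $\Gamma$-invariance and the semicontinuity $w(\g(\infty))\le i(\g')$ --- is exactly the paper's route (Lemma \ref{width semi-cont}), and it correctly gives $w(\g)=i(\g)=w(\del)$. The remaining three claims, however, all rest on arguments that do not close. First, the existence of a forward unstable geodesic in $\M(\g)$: an infinite, transversely ordered family of pairwise disjoint minimal geodesics inside a Morse tube of width $D$ is \emph{not} a contradiction --- in the periodic case $\M_{per}(\g)$ can already be an uncountable disjoint family filling the strip (Theorem \ref{morse periodic}), so no packing obstruction exists. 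The paper's actual mechanism is different: from $i(\g)=w(\g)$ one deduces that the limit $c_0=\lim\tau_n c_\g^0$ comes arbitrarily close to $c_\g^0$ near $+\infty$, so any ray leaving $c_\g^0$ into the strip traps a point of some $\tau_n c_\g^0$ between itself and $c_\g^0$, contradicting Lemma \ref{crossing minimals}. Second, part~(1): the ``quantitative packing'' of three strips of width $w(\del)$ in a tube of width $D$ cannot yield a contradiction, since $w(\del)$ may be arbitrarily small relative to $D$; you acknowledge this step is unproved, and it is in fact the wrong tool. The paper instead upgrades the instability of one bounding geodesic of a recurrent $\g$ to the statement that $w(\g')=0$ for \emph{all} $\g'\in\G_+(\del)$ on that side of $\g$ (Remark \ref{bem recurrent thm 1}); the middle of three recurrent geodesics then forces one of the outer two to have width zero, contradicting part~(2).

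Third, part~(3) has two gaps. The translates $\tau_n c_{\g'}^i=c_{\tau_n\g'}^i$ have forward endpoint $\tau_n\del\ne\del$, so they are not leaves of $L(\del)$; the lamination hypothesis says nothing about them, and no transverse intersection of leaves of $L(\del)$ is forced by pushing $\g'$ toward $\g$ (disjoint curves can converge to coincident limits without ever crossing). The paper uses the hypothesis quite differently: the lamination guarantees that the bounding geodesics of a second recurrent $\g'$ cannot cross $c_\g^1$, which lets the squeezing argument of Remark \ref{bem recurrent thm 1} run on the side where instability of the bounding geodesic is not known, giving $w(\g')=0$ and hence at most one recurrent direction. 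Finally, your closing step --- that $w(\g')>0$ for some $\g'\ne\g$ would produce a \emph{second forward recurrent geodesic in $\G_+(\del)$} --- is unsupported: limits of $\Gamma$-translates of $\g'$ land in $\G_+(\del'')$ for other boundary points $\del''$, not in $\G_+(\del)$. The vanishing $w(\g')=0$ for $\g'\ne\g$ again comes from the one-sided argument of Remark \ref{bem recurrent thm 1} combined with the lamination hypothesis, not from a uniqueness-of-recurrence argument.
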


\begin{cor}
If $\g\in \G$ is forward recurrent, then there exists forward unstable minimal geodesic in $\M(\g)$.
\end{cor}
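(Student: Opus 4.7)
The corollary follows almost immediately from Theorem \ref{thm intro recurrent}(2) combined with Remark \ref{bem width def}, via a case split on the value of $w(\del)$ where $\del := \g(\infty)$.

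The plan is as follows. First, set $\del := \g(\infty)\in S^1$ and observe that $\g\in\G_+(\del)$, so by definition of the width of a direction, $w(\g)\leq w(\del)$. The set $\M(\g)$ is nonempty in any case, since it contains the bounding geodesics $c_\g^0, c_\g^1$ supplied by Lemma \ref{bounding geodesics morse}.

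Next, distinguish two cases. If $w(\del)>0$, then by hypothesis $\g$ is forward recurrent, so part (2) of Theorem \ref{thm intro recurrent} applies directly and yields a forward unstable geodesic in $\M(\g)$. If on the other hand $w(\del)=0$, then from $0\leq w(\g)\leq w(\del)=0$ we conclude $w(\g)=0$. By the second assertion of Remark \ref{bem width def}, $w(\g)=0$ implies that every minimal geodesic in $\M(\g)$ is forward unstable, so again $\M(\g)$ contains (in fact consists entirely of) forward unstable minimal geodesics.

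Combining the two cases, $\M(\g)$ contains a forward unstable minimal geodesic under the sole assumption that $\g$ is forward recurrent. There is no real obstacle here: all the work is hidden in Theorem \ref{thm intro recurrent}(2) (the case $w(\del)>0$) and in the width-zero implication recorded in Remark \ref{bem width def} (the case $w(\del)=0$). The only thing to be careful about is the monotonicity $w(\g)\leq w(\del)$, which is immediate from the definition of $w(\del)$ as a supremum over $\G_+(\del)$.
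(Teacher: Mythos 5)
Your proof is correct and follows exactly the paper's own argument: a case split on $w(\del)$ for $\del=\g(\infty)$, invoking Theorem \ref{thm intro recurrent}(2) when $w(\del)>0$ and the width-zero instability statement when $w(\del)=0$. The only cosmetic difference is that you cite Remark \ref{bem width def} (where the implication is merely announced) rather than Proposition \ref{w=0 implies unique weak KAM}, where it is actually proved.
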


\begin{proof}
If $w(\g(\infty))=0$, then all minimal geodesics in $\M(\g)$ are forward unstable by Proposition \ref{w=0 implies unique weak KAM}. If $w(\g(\infty))>0$, then the claim follows from Theorem \ref{thm intro recurrent} (2).
\end{proof}

In Mather theory, one studies minimal geodesics $c:\R\to M$ (more generally: action minimizers of Lagrangian systems), such that $\dot c:\R\to SM$ is a graph over its projection in $M$. This means that in the universal cover, the curves $\tau c(\R)$ and $c(\R)$ are equal or disjoint for all $\tau\in\Gamma$, and $c$ is called \emph{simple}. Note that, if a minimal geodesic $c$ in $\M(\g)$ is simple, then so is its background geodesic $\g$. In this case we have the following result.

\begin{thm}[Simple directions]\label{thm simple}
If $\G_+(\del)$ contains a simple hyperbolic geodesic, which is not the axis of any $\tau\in\Gamma-\{\id\}$, then $w(\del)=0$.
\end{thm}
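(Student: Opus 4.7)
The plan is to argue by contradiction: assume $w(\del)>0$, and exploit the simplicity and non-periodicity of $\g$ together with the $\Gamma$-action on minimal geodesics to derive a contradiction.

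First I would extract a recurrence sequence. Since $\g$ is a complete non-periodic hyperbolic geodesic on the closed surface $M=X/\Gamma$, compactness of the hyperbolic unit tangent bundle of $M$ yields $\tau_n\in\Gamma$ and $t_n\to\infty$ with $\tau_n\dot\g(t_n)\to\dot\g(0)$; hence $\tau_n\g\to\g$ in $\G$. By simplicity of $\g$ and the non-axial hypothesis, $\tau_n\g\cap\g=\emptyset$ for all $n$, and after passing to a subsequence all $\tau_n\g$ lie on the same side of $\g$. Because $\g$ is thereby forward recurrent in the hyperbolic sense, Theorem~\ref{thm intro recurrent}(2) applied under the contradiction hypothesis $w(\del)>0$ gives $w(\g)=w(\del)>0$; in particular the bounding geodesics $c_\g^0,c_\g^1\in\M(\g)$ are distinct, and there exist $s_n\to\infty$ with
\[ d_h(c_\g^0(\R),c_\g^1(s_n))\ge w(\g)/2. \]

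Next, by the $\Gamma$-equivariance $c_{\tau_n\g}^i=\tau_n c_\g^i$ and the Morse lemma, each $\tau_n c_\g^i$ is a minimal geodesic lying in the $D$-tube around $\tau_n\g$; since $\tau_n$ is a $g_h$-isometry, the separation above is transported to $\tau_n c_\g^0$ and $\tau_n c_\g^1$ at the parameters $s_n$. An Arzel\`a--Ascoli argument, combined with the convergence of asymptotic endpoints $\tau_n\g(\pm\infty)\to\g(\pm\infty)$ on $S^1$, then extracts subsequential limits $c_0^*,c_1^*\in\M(\g)$ of $\tau_n c_\g^0$ and $\tau_n c_\g^1$ as oriented unparametrized curves, both contained in the closed strip bounded by $c_\g^0$ and $c_\g^1$. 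The non-crossing property of minimal geodesics (any two cross at most once), the side condition that all $\tau_n\g$ lie on one fixed side of $\g$, and the extremal role of $\tau_n c_\g^i$ in $\M(\tau_n\g)$ should together force both limits $c_0^*$ and $c_1^*$ to coincide with the single extremal bounding geodesic of $\M(\g)$ on that same side. Passing the inherited separation to the limit then contradicts $c_0^*=c_1^*$, so $w(\del)=0$.

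The main obstacle is the pinning step in the second paragraph: one must combine the non-crossing property with the simplicity-induced side condition to prove that both subsequential limits of the translated bounding geodesics collapse onto a single extremal bounding geodesic of $\M(\g)$. A secondary subtlety is that $\tau_n\in\Gamma$ is a $g_h$-isometry but is \emph{not} a Finsler isometry; Finsler minimality of the translated objects is preserved by $\Gamma$-invariance of $F$, while the lower bound on $g_h$-distance propagates via the isometry property of $\tau_n$ with respect to $g_h$. Handling these two structures in parallel is the principal technical step.
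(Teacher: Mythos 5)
There is a genuine gap at the very first step. You assert that compactness of the unit tangent bundle of $M$ yields $\tau_n\in\Gamma$ and $t_n\to\infty$ with $\tau_n\dot\g(t_n)\to\dot\g(0)$, i.e.\ that $\g$ is forward recurrent. Compactness only gives that the forward orbit accumulates \emph{somewhere} in $SM$; it does not force it to return to its own initial vector. A simple, non-periodic geodesic on a closed hyperbolic surface need not be recurrent: the standard example is a simple geodesic that spirals in forward time onto a simple closed geodesic. So all you can extract is $\tau_n\g\to\g'$ for some limit $\g'\in\G$, which in general is different from $\g$ and may well be the axis of some $\tau\in\Gamma-\{\id\}$ (exactly what happens in the spiralling example). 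This breaks your second paragraph twice over: the limits of $\tau_nc_\g^i$ then live in $\M(\g')$ rather than $\M(\g)$, so they cannot be ``pinned'' to a bounding geodesic of $\M(\g)$ and compared with the separation along $c_\g^1$; and the case where $\g'$ is an axis is not addressed at all, although it genuinely occurs and requires a separate argument. The appeal to Theorem~\ref{thm intro recurrent}(2) is also unnecessary even where recurrence does hold.

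The paper's proof is built precisely around this difficulty. One takes a positive sequence with $\tau_n\g\to\g'$ and notes that $\g'$ inherits simplicity. If $\g'$ is an axis, Lemma~\ref{periodic approx} together with Remark~\ref{bem periodic approx} already gives $w(\del)=0$. If $\g'$ is not an axis, one approximates $\g'$ by its \emph{own} translates, $\tau_n'\g'\to\g''$, where simplicity of $\g'$ forces $\tau_n'\g'\cap\g''=\emptyset$; Lemma~\ref{one-sided approx} (which is the rigorous version of your ``pinning step'': a one-sided, disjoint approximation collapses both translated bounding geodesics onto a single extremal geodesic of the limit class) then yields $w(\g')=0$, and the semicontinuity Lemma~\ref{width semi-cont} transfers this back to the original direction via $w(\del)\le i(\g')\le w(\g')=0$. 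To repair your argument you would need to replace the false recurrence claim by this two-step approximation, add the axis case, and route the conclusion through Lemma~\ref{width semi-cont} rather than working inside $\M(\g)$ itself.
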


It is not clear to us whether the conclusion of Theorem \ref{thm countable unstable} is true for all $\del\in S^1$. If it would hold, then this would clarify the structure of all $\RR_+(\del)$, as seen in the following theorem.

\begin{thm}\label{thm A}
If $F$ is such that for all $\del\in S^1$ the set of bounding geodesics $L(\del)$ with direction $\del$ defined in Theorem \ref{thm countable unstable} is a lamination of $X$, then $w(\del)=0$ for all $\del\in S^1$ that are not fixed under any $\tau\in\Gamma-\{\id\}$.
\end{thm}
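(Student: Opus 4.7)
The plan is to argue by contradiction, combining the recurrent-direction result of Theorem \ref{thm intro recurrent} with a compactness/conjugation argument, to produce a non-trivial element of $\Gamma$ fixing $\del$. Assume for contradiction that $\del\in S^1$ is not fixed by any $\tau\in\Gamma-\{\id\}$ and yet $w(\del)>0$. Pick $\g\in\G_+(\del)$ and $\e>0$ with $w(\g)>\e$, and choose $t_n\to\infty$ such that $d_h(c_\g^0(\R),c_\g^1(t_n))\to w(\g)$.

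First I would carry out a limit construction. Using cocompactness of the $\Gamma$-action on $X$, pick $\tau_n\in\Gamma$ so that $\tau_n c_\g^1(t_n)$ lies in a fixed compact fundamental domain. Along a subsequence, $\tau_n\dot c_\g^1(t_n)$ converges in $SX$, the reparametrised hyperbolic geodesics $\tau_n\g(t_n+\cdot)$ converge to a hyperbolic geodesic $\g^*\in\G$ whose projection to $SM$ is forward recurrent by construction, and the bounding geodesics $\tau_n c_\g^1(t_n+\cdot)$ and a similarly shifted $\tau_n c_\g^0(s_n+\cdot)$ converge to disjoint minimal geodesics $c^{0*},c^{1*}\in\M(\g^*)$ with $d_h(c^{0*}(\R),c^{1*}(0))=w(\g)\geq\e$.

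Next I would apply Theorem \ref{thm intro recurrent} to $\g^*$. Setting $\del^*:=\g^*(\infty)$, the standing hypothesis of Theorem \ref{thm A} guarantees that $L(\del^*)$ is a lamination, so parts (2) and (3) yield
\[ w(\g^*)=w(\del^*)=\inf_{t\in\R} d_h(c_{\g^*}^0(\R),c_{\g^*}^1(t)) , \]
and identify $\g^*$ as the unique forward recurrent geodesic in $\G_+(\del^*)$. Combined with the positive distance between $c^{0*}$ and $c^{1*}$, together with Lemma \ref{bounding geodesics morse} which ensures they lie inside the outermost bounding strip of $\g^*$, this yields $w(\g^*)\geq\e>0$; in particular the bounding strip of $\g^*$ is bi-infinite with uniform hyperbolic width at least $\e$. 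The forward recurrence of $\dot\g^*$ in $SM$ then produces a sequence $\sigma_n\in\Gamma$ with $\sigma_n\g^*\to\g^*$ in $\G$ and $\sigma_n\del^*\to\del^*$. Each $\sigma_n\g^*$ is the unique forward recurrent direction in $\G_+(\sigma_n\del^*)$ and its bounding strip has uniform width $\geq\e$; combining this with discreteness of $\Gamma$ and the lamination hypothesis applied to every $\sigma_n\del^*$, the strips of $\sigma_n\g^*$ cannot remain transverse to that of $\g^*$ while accumulating on it, forcing $\sigma_n\g^*=\g^*$ for some $n$. This produces a non-trivial $\tau^*\in\Gamma$ with axis $\g^*$, and $\del^*$ is fixed by $\tau^*$.

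Finally I would pull this symmetry back to $\g$. From $\tau_n\g(t_n+\cdot)\to\g^*$ together with $\tau^*\g^*=\g^*$, the conjugates $\tau_n^{-1}\tau^*\tau_n\in\Gamma-\{\id\}$ approximately preserve $\g$ up to a time shift equal to the translation length of $\tau^*$. Discreteness of $\Gamma$ and the local injectivity of the projection $SX\to SM$ force the conjugate sequence to be eventually constant, equal to some $\tau'\in\Gamma-\{\id\}$; passing to the limit in the approximate equation gives $\tau'\g=\g$, so $\del=\g(\infty)$ is fixed by the non-trivial element $\tau'$, contradicting our hypothesis on $\del$. The main obstacle is the axis-extraction step in the previous paragraph: it requires weaving together the uniqueness conclusion of Theorem \ref{thm intro recurrent}(3), the bi-infinite positive width of the bounding strip of $\g^*$, the $\Gamma$-equivariance of bounding strips, and the discreteness of $\Gamma$, to rule out the possibility that infinitely many distinct forward-recurrent strips accumulate onto that of $\g^*$ without one of them coinciding with it. Once the symmetry $\tau^*$ is in hand, the pullback contradiction is routine.
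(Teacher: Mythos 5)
Your overall strategy---pass to a limit direction under a positive sequence and exploit recurrence---is in the same spirit as the paper's proof, but the argument breaks at two places, one minor and one fatal. The minor one: a subsequential limit of $\tau_n\dot\g(t_n)$ with $t_n\to\infty$ lies in the $\omega$-limit set of the projected orbit, and a point of an $\omega$-limit set need not be forward recurrent (for instance, a geodesic forward-asymptotic to a closed geodesic without being its axis is not forward recurrent, yet it can occur as such a limit). You would have to pass further to a minimal subset of the orbit closure to get recurrence, and then recover the lower bound on the width of the new limit from Lemma \ref{width semi-cont} rather than from direct convergence of the strips.

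The fatal gap is the axis-extraction step, which you flag as the main obstacle but then only assert. Forward recurrence of $\g^*$ gives $\sigma_n\g^*\to\g^*$ with, in general, $\sigma_n\g^*\neq\g^*$ for all $n$; discreteness of $\Gamma$ does not forbid this, since the $\sigma_n$ leave every finite subset of $\Gamma$ (this is exactly the generic picture for a non-periodic recurrent geodesic). Your claim that ``the strips cannot remain transverse to that of $\g^*$ while accumulating on it'' addresses precisely the case you cannot dismiss: if $\sigma_n\g^*\cap\g^*=\emptyset$ for infinitely many $n$, Lemma \ref{one-sided approx} already yields $w(\g^*)=0$, so in the remaining hard case the translates \emph{do} cross $\g^*$ and their strips \emph{do} cross the strip of $\g^*$ transversely. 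Assumption (A) does not help here, because it only forbids crossings among bounding geodesics sharing the same forward endpoint, whereas $\sigma_n\del^*\neq\del^*$. What is missing is the quantitative mechanism the paper supplies: one works with $W=\sup\{w(\g) : \g\ \text{not an axis of any}\ \tau\in\Gamma-\{\id\}\}$ and a nearly extremal $\g$; after a second positive sequence, the limit of the crossing strip is stacked next to the limit of the original strip inside a single $\M(\g'')$ (this is where (A) is genuinely used, to keep the crossing bounding geodesic from escaping $\M(\g'')$), giving $w(\g'')\geq 2(W-\e)>W$ and hence forcing $\g''$ to be an axis, after which Lemma \ref{periodic approx} produces the contradiction $w(\g')=0$. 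Without such a doubling argument (or a substitute), the assertion that accumulating recurrent strips must eventually coincide is essentially the theorem you are trying to prove. The final conjugation step is also unsound as written---the conjugates $\tau_n^{-1}\tau^*\tau_n$ move the points $\g(t_n)$, which tend to infinity, by a bounded amount, and that does not force the sequence to be eventually constant---but this is downstream of the main gap.
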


\begin{remark}
The set $L(\del)$ is a lamination of $X$ for all $\del\in S^1$, e.g., if $F$ has no conjugate points, cf. Theorem 12.1 in \cite{morse_hedlund}. In particular, if $F$ is Riemannian with non-positive curvature, then the only flat strips of uniformly positive width are periodic. By Theorem \ref{thm countable unstable}, this assumption of Theorem \ref{thm A} is always ``almost'' true.
\end{remark}

\begin{remark}\label{bem high dim}
The main concepts in this paper work in any dimension, in particular the weak KAM theory for manifolds of hyperbolic type developed in Section \ref{section weak KAM} and the concept of width in Section \ref{section width}. E.g., if in a closed manifold $M$ carrying a Riemanniam metric $g_h$ of strictly negative curvature there exists a hyperbolic geodesic $\g$ with respect to $g_h$, such that in $\M(\g)$ there is only one minimal geodesic, then $w(\del)=0$ for almost all $\del$ in the so-called Gromov boundary (with respect to the Lebesgue measure, identifying the Gromov boundary with a sphere $S^{\dim M}$) , cf. the arguments for the proof of Theorem \ref{dense width 0}. Here $w(\del)$ has to be defined differently, for instance by setting
\[ w(\del) = \sup \big\{ \liminf_{t\to\infty} d_h(c_v(\R), c_w(t)) : v,w\in \RR_+(\del) \big\} . \]
Most results, however, are strongest in dimension two, hence we stick to this case to simplicity the exposition.
\end{remark}

{\bf Structure of this paper} In Section \ref{section finsler}, we recall basic definitions and properties as well as the Morse Lemma. Section \ref{section weak KAM} is devoted to the proof of Theorem \ref{thm countable unstable}; here we study so-called weak KAM solutions, which are used to prove Theorem \ref{thm countable unstable} and its corollary. In Section \ref{section width}, the concept of width of asymptotic directions is introduced to obtain Theorems \ref{dense width 0}, \ref{width periodic intro}, \ref{thm intro recurrent}, \ref{thm simple} and \ref{thm A}. In Appendix \ref{appendix}, we make additional remarks on weak KAM solutions in dimension two; the results of the appendix are, however, not used in this paper.

\section{Finsler metrics and minimal rays}\label{section finsler}

We write $\pi:TX\to X$ for the canonical projection, $0_X$ denotes the zero section and $T_xX=\pi^{-1}(x)$ the fibers.

\begin{defn}\label{def finsler}
A function $F:TX\to\R$ is a \emph{Finsler metric on $X$}, if the following conditions are satisfied:
\begin{enumerate}
\item (smoothness) $F$ is $C^\infty$ off $0_X$, 

\item (positive homogeneity) $F(\lam v) = \lam F(v)$ for all $v\in TX, \lam\geq 0$,

\item (strict convexity) the fiberwise Hessian $\Hess(F^2|_{T_xX})$ of the square $F^2$ is positive definite off $0_X$ for all $x\in X$.
\end{enumerate}

We say that $F$ is \emph{uniformly equivalent} to $g_h$, if
\[ \exists c_F > 0: \quad \frac{1}{c_F}\cdot F \leq \norm_h \leq c_F\cdot F. \]

We say that $F$ is \emph{invariant under $\Gamma\leq \Iso(X,g_h)$}, the group of deck transformations $\Gamma$ with respect to the covering $X\to M$, if
\[ F(d\tau(\pi v)v)=F(v) \quad \forall v\in TX, \tau\in\Gamma. \]
\end{defn}

Note that uniform equivalence of $F, g_h$ is implied by invariance under $\Gamma$ due to compactness of $M$. For the rest of this paper, we fix the Finsler metric $F$ on $X$, which is assumed to be uniformly equivalent to $g_h$.

Write $SX=\{v\in SX : F(v)=1\}$ for the unit tangent bundle of $F$. The \emph{geodesic flow} $\phi_F^t:SX\to SX$ of $F$ is given by $\phi_F^tv=\dot c_v(t)$, where $c_v:\R\to X$ is the $F$-geodesic defined by $\dot c_v(0)=v$. We write $l_F(c) = \int_a^bF(\dot c) dt$ for the $F$-length of absolutely continuous ($C^{ac}$) curves $c:[a,b]\to X$ and
\[ d_F(x,y) = \inf\{l_F(c) ~|~ c:[0,1]\to X ~ C^{ac}, ~ c(0)=x,c(1)=y \} \]
for the $F$-distance. Note that if $F$ is not reversible, i.e. if not $F(\lam v)=|\lam|F(v)$ for all $\lam\in\R$, we have $d_F(x,y)\neq d_F(y,x)$ in general.

\begin{defn}
A $C^{ac}$ curve segment $c:[a,b]\to X$ with $F(\dot c)=1$ a.e. is said to be \emph{minimal}, if $l_F(c)=d_F(c(a),c(b))$. Curves $c:[0,\infty)\to X$, $c:(-\infty,0]\to X$, $c:\R\to X$ are called \emph{forward rays, backward rays, minimal geodesics}, respectively, if each restriction $c|_{[a,b]}$ is a minimal segment. Set
\begin{align*}
\RR_- & := \{v \in SX : c_v:(-\infty,0]\to X \text{ is a backward ray} \}, \\
\RR_+ & := \{v \in SX : c_v:[0,\infty)\to X \text{ is a forward ray} \}, \\
\M & := \{v \in SX : c_v:\R\to X \text{ is a minimal geodesic} \}.
\end{align*}
\end{defn}

We will in this paper mainly be concerned with forward rays, the results for backward rays being completely analogous.

The following lemma is a key property of rays. It excludes in particular successive intersections of rays and shows that asymptotic rays can cross only in a common initial point. The idea of the proof is classical; it can be found in \cite{paper1}, Lemma 2.21.

\begin{lemma}\label{crossing minimals}
Let $v_n,v,w\in \RR_+$ with $v_n\to v$. Assume that $\pi w=c_v(a)$ for some $a>0$, but $w\neq \dot c_v(a)$. Then, for all $\delta>0$ and sufficiently large $n$,
\[ \inf \{ d_h(c_{v_n}(s),c_w(t)) : s\in [a,\infty), t\in [\delta,\infty) \} > 0. \]
\end{lemma}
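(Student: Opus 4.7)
The plan is to argue by contradiction and exploit the strict convexity of $F$ through a classical corner-rounding shortcut; this is exactly the mechanism that rules out ``successive intersections'' of minimal rays. Assume the conclusion fails. Then there exist $\delta>0$, a subsequence (still denoted $v_n$), and sequences $s_n\geq a$, $t_n\geq\delta$ with $d_h(c_{v_n}(s_n),c_w(t_n))\to 0$. Because $c_{v_n}$ and $c_w$ are rays, $d_F(c_{v_n}(0),c_{v_n}(s_n))=s_n$ and $d_F(c_w(0),c_w(t_n))=t_n$, so the uniform equivalence $F\asymp g_h$ forces $(s_n)$ and $(t_n)$ to be either both bounded or both unbounded.

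First I would treat the bounded case. Extract a subsequence so that $s_n\to s^*\geq a$ and $t_n\to t^*\geq\delta$; since $v_n\to v$ this yields $c_v(s^*)=c_w(t^*)$. Now I invoke minimality twice. The concatenation of $c_v|_{[0,a]}$ with $c_w|_{[0,t^*]}$ is a competitor from $c_v(0)$ to $c_v(s^*)$, hence $s^*\leq a+t^*$. The segment $c_v|_{[a,s^*]}$ is a competitor from $c_w(0)=c_v(a)$ to $c_w(t^*)=c_v(s^*)$, hence $s^*-a\geq t^*$. Thus $s^*=a+t^*$ and the concatenation itself is a minimizing curve, but it carries a corner at $c_v(a)$ since $w\neq\dot c_v(a)$, which contradicts the smoothness of $F$-minimizers. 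Concretely, the strict convexity of $F^2$ on the fiber $T_{c_v(a)}X$ provides $\rho(\eta)>0$ for small $\eta$ with $d_F(c_v(a-\eta),c_w(\eta))\leq 2\eta-\rho(\eta)$, and splicing this chord in place of the corner yields a strictly shorter curve from $c_v(0)$ to $c_v(s^*)$, contradicting the minimality of $c_v$.

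For the unbounded case $s_n,t_n\to\infty$, which I expect to be the main obstacle, the same corner-rounding works but must be made quantitative. The triangle inequality, $F\asymp g_h$, the convergence $c_{v_n}(a)\to c_v(a)=c_w(0)$, and $d_h(c_{v_n}(s_n),c_w(t_n))\to 0$ yield $s_n-a=t_n+o(1)$ as $n\to\infty$, via the same two competitor arguments as above. I then fix $\eta>0$ small enough that the strict-convexity saving $\rho(\eta)>0$ at the corner $(\dot c_v(a),w)$ is available and $\eta<\delta$. For large $n$ the composite path made of $c_{v_n}|_{[0,a-\eta]}$, a chord to $c_w(\eta)$, $c_w|_{[\eta,t_n]}$, and a chord to $c_{v_n}(s_n)$ has $F$-length at most $a+t_n-\rho(\eta)+o(1)$, which is strictly less than $s_n$ for large $n$, contradicting $d_F(c_{v_n}(0),c_{v_n}(s_n))=s_n$. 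The delicate point is the bookkeeping of the $o(1)$ error terms so that they are dominated by the fixed saving $\rho(\eta)$; here $v_n\to v$ is used to transport the corner estimate from $c_v$ to $c_{v_n}$, and $t_n\geq\delta$ guarantees that the chord at $c_w(\eta)$ really lies on the segment being shortcut.
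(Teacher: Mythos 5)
Your proof is correct and follows exactly the classical Morse corner-cutting argument that the paper itself invokes by reference (it gives no proof here, citing Lemma 2.21 of the author's torus paper instead). The case split into bounded and unbounded parameter sequences, the identity $s_n-a=t_n+o(1)$ from the two competitor paths, and the fixed corner saving $\rho(\eta)>0$ dominating the $o(1)$ errors are all sound and constitute the standard mechanism.
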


\subsection{The Morse Lemma and asymptotic directions}\label{section morse}

The following theorem, due to H. M. Morse, is the cornerstone of our investigation and arises from the uniform equivalence of $F$ and the negatively curved $g_h$. The fact that the Morse Lemma also holds in the Finsler case was first observed by E. M. Zaustinsky \cite{zaustinsky}. Due to Klingenberg \cite{klingenberg}, the Morse Lemma holds in any dimension.

\begin{thm}[Morse Lemma, cf. \cite{morse}]\label{morse lemma}
Let $F, g_h$ be uniformly equivalent. Then there exists a constant $D\geq 0$ depending only on $F, g_h$ with the following property. For any two points $x,y\in X$, the hyperbolic geodesic segment $\g:[0,d_h(x,y)]\to X$ from $x$ to $y$ and any minimal segment $c:[0,d_F(x,y)]\to X$ from $x$ to $y$ we have
\[ \max_{t\in[0,d_F(x,y)]} d_h(\g[0,d_h(x,y)],c(t)) \leq D. \]
If $c:[0,\infty)\to X$ is a forward ray, then there exists a hyperbolic ray $\g:[0,\infty)\to X$, and conversely, if $\g:[0,\infty)\to X$ is a hyperbolic ray, then there exists a forward ray $c:[0,\infty)\to X$, such that
\[ \sup_{t\in[0,\infty)} d_h(\g[0,\infty),c(t)) \leq D. \]
The analogous statements hold for backward rays and minimal geodesics.
\end{thm}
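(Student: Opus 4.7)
The plan is to reduce the Morse Lemma to stability of quasi-geodesics in the CAT$(-1)$ space $(X,g_h)$. Uniform equivalence $c_F^{-1}\|\cdot\|_h\leq F\leq c_F\|\cdot\|_h$ passes to the induced distances, giving $c_F^{-1}d_h\leq d_F\leq c_F d_h$. If $c:[0,L]\to X$ is an $F$-minimal segment parametrized by $F$-arclength, then $d_F(c(s),c(t))=t-s$ for $s\leq t$, and the comparison yields
\[ c_F^{-1}(t-s)\leq d_h(c(s),c(t))\leq c_F(t-s), \]
so $c$ is a $(c_F,0)$-quasi-geodesic in the Poincar\'e disc with constants depending only on $c_F$.

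For the first (segment) assertion, I would invoke the classical Morse stability lemma in a CAT$(-1)$ space: there exists $D=D(c_F)\geq 0$ such that any $(c_F,0)$-quasi-geodesic segment lies in the $D$-neighborhood of the unique hyperbolic geodesic joining its endpoints, and vice versa. A self-contained argument proceeds via exponential divergence: if on a subinterval the curve $c$ stays at hyperbolic distance $\geq R$ from $\gamma$, orthogonal projection onto $\gamma$ together with the constant-curvature-$-1$ identities shows that the hyperbolic length of that subarc is at least $\cosh(R)$ times the length of its projection. Replacing the subarc by a detour onto $\gamma$ and back then produces a strictly shorter $F$-curve as soon as $R$ exceeds a constant depending only on $c_F$, contradicting $F$-minimality.

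For the ray assertion, I would run a diagonal/compactness argument. Given an $F$-ray $c:[0,\infty)\to X$, let $\gamma_T$ be the hyperbolic geodesic from $c(0)$ to $c(T)$; the segment version applied to $c|_{[0,T]}$ gives $d_h(c(t),\gamma_T)\leq D$ for $t\in[0,T]$. The initial hyperbolic tangent vectors $\dot\gamma_T(0)$ lie in the compact hyperbolic unit sphere at $c(0)$, so along a subsequence $T_n\to\infty$ they converge to some $v$; the hyperbolic ray $\gamma$ with $\dot\gamma(0)=v$ then satisfies $d_h(\gamma[0,\infty),c(t))\leq D$ by continuous dependence on initial conditions and a routine limit. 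The converse --- producing an $F$-ray close to a given hyperbolic ray --- is symmetric: $F$-minimal segments from $\gamma(0)$ to $\gamma(T)$ exist by completeness and uniform equivalence, and a subsequential limit of their initial $F$-unit velocities inside the compact $F$-unit sphere at $\gamma(0)$ produces the desired $F$-ray. Backward rays and two-sided minimal geodesics follow by the same construction on the negative half-line or on both sides simultaneously.

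The main difficulty is obtaining the uniformity of $D$: it must depend on $F$ and $g_h$ alone, not on the particular segment or its length. This is precisely the content of quasi-geodesic stability in Gromov hyperbolic spaces, and the constant curvature $-1$ of $g_h$ makes the exponential-divergence estimate quantitatively sharp and explicit. Compactness of $M$ plays no direct role here; it enters only through the uniform equivalence of $F$ and $g_h$ assumed at the outset.
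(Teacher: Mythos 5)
The paper does not actually prove this theorem: it is quoted from Morse \cite{morse} (Riemannian surfaces), Zaustinsky \cite{zaustinsky} (Finsler case) and Klingenberg \cite{klingenberg} (higher dimensions). Your reduction --- $F$-minimal segments parametrized by $F$-arclength are $(c_F,0)$-quasi-geodesics for $d_h$, so the classical stability theorem for quasi-geodesics in Gromov-hyperbolic (here CAT$(-1)$) spaces applies --- is exactly the modern packaging of those classical proofs, and your limiting arguments for the ray/geodesic statements (Arzel\`a--Ascoli on initial velocities, uniform $D$ from the segment case, subsequential limits of minimal segments $c_n$ from $\g(0)$ to $\g(n)$) are correct and routine. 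So as a citation-level argument your proposal is sound and is essentially the same route as the literature the paper points to.

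The one genuine weak point is your ``self-contained'' version of the stability estimate. The single surgery you describe --- replace a subarc staying at distance $\geq R$ from $\g$ by the two perpendiculars plus the projected segment --- does not by itself contradict minimality. The detour costs about $2R$ (in $d_h$, hence up to $c_F$ in $F$-length) to descend to $\g$ and climb back, while the only a priori lower bound on the excised subarc is also of order $R$ (an excursion can spike out to distance $R$ and return with projection of length zero), so the comparison $l_F(\text{detour})<l_F(\text{subarc})$ need not hold for any choice of $R$. The $\cosh(R)$ contraction of the projection only wins when the excursion has long shadow on $\g$, and turning this into a uniform bound on $\sup_t d_h(c(t),\g)$ requires the iterated/logarithmic argument (\`a la Bridson--Haefliger III.H.1.7, or Morse's original estimates): first bound the distance from points of $\g$ to the image of $c$ via the exponential lower bound for paths avoiding a ball centered on $\g$, then bound the gaps between successive returns of $c$ to that neighborhood using the quasi-geodesic parameter control. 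If you intend the proof to be self-contained rather than a citation, you need to supply that second layer; as written, the sketch would not compile into a proof.
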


With the Morse Lemma we can associate asymptotic directions to rays. Namely, if $c:[0,\infty)\to X$ is a ray, choose any hyperbolic ray $\g:[0,\infty)\to X$, such that $\sup_{t\in[0,\infty)} d_h(\g[0,\infty),c(t)) <\infty$ and write $c(\infty):= \g(\infty)\in S^1$. Note that $c(\infty)=\del\in S^1$ if and only if $c(t)\to \del$ in the euclidean sense in $\C\supset X$. Recall that $\G$ is the set of oriented, unparametrized hyperbolic geodesics $\g\subset X$ and that $\RR_\pm, \M$ are the sets of initial vectors of forward and backward rays and minimal geodesics, respectively.

\begin{defn}
For $\del\in S^1, \g\in \G$ we set
\begin{align*}
 \RR_\pm(\del) & := \{ v\in \RR_\pm : c_v(\pm\infty)=\del \}, \\
 \M(\g) & := \{ v\in \M : c_v(-\infty)=\g(-\infty) \text{ and } c_v(\infty)=\g(\infty) \}.
\end{align*}
\end{defn}

\begin{remark}\label{remark bounding geod}
Due to the Morse Lemma we have
\[ \RR_\pm = \bigcup_{\del\in S^1} \RR_\pm(\del), \quad \M = \bigcup_{\g\in\G} \M(\g).\]
\end{remark}

The following lemma shows that the usual topology of $S^1$ is connected with the topology in $SX$. Recall that we identified $X$ with the interior of the unit disc in $\C$. For the closure $\bar X = X\cup S^1$ we take the usual topology induced by $\C$.

\begin{lemma}\label{as dir cont rays}
If $v_n,v\in\RR_+$ with $v_n\to v$, then $c_{v_n}(\infty)\to c_v(\infty)$ in $S^1$.
\end{lemma}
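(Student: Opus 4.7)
The plan is to argue by contradiction: assume, after passing to a subsequence, that $c_{v_n}(\infty)\to\del$ for some $\del\neq c_v(\infty)$ in $S^1$, and produce a contradiction via the Morse Lemma together with the hyperbolic geometry of the Poincar\'e disc.

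First I would associate to each $c_{v_n}$ the hyperbolic ray $\g_n:[0,\infty)\to X$ starting at $\pi(v_n)$ with ideal endpoint $c_{v_n}(\infty)$. The Morse Lemma, combined with the standard fact that two asymptotic hyperbolic rays stay within a uniformly bounded hyperbolic distance of one another, yields a constant $D'$ independent of $n$ with $d_h(c_{v_n}(t),\g_n)\leq D'$ for every $t\geq 0$. Since $\pi(v_n)\to\pi(v)$ in $X$ and $c_{v_n}(\infty)\to\del$ in $S^1$, continuity of the hyperbolic geodesic flow in its endpoints gives that $\g_n$ converges, uniformly on compact parameter intervals, to the hyperbolic ray $\g^*$ from $\pi(v)$ to $\del$.

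Next I would upgrade this to a bound for $c_v$ itself. For a fixed $t\geq 0$, continuity of the $F$-geodesic flow gives $c_{v_n}(t)\to c_v(t)$; choose $s_n\geq 0$ with $d_h(c_{v_n}(t),\g_n(s_n))\leq D'$. The points $\g_n(s_n)$ stay in a hyperbolically bounded subset of $X$ (they sit within $D'$ of a convergent sequence), while the images of $\g_n$ eventually leave every compact subset of $X$, so the parameters $s_n$ must remain bounded. Passing to a subsequence, $s_n\to s^*$ and $\g_n(s_n)\to\g^*(s^*)\in\g^*([0,\infty))$, yielding $d_h(c_v(t),\g^*)\leq D'$ for every $t\geq 0$.

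Finally I would close the argument with boundary behavior. Because $c_v$ is a forward ray, $c_v(t)\to c_v(\infty)$ in the Euclidean topology of $\bar X=X\cup S^1$, while each $c_v(t)$ lies in the hyperbolic $D'$-neighborhood of $\g^*$. In the Poincar\'e disc, hyperbolic balls of fixed radius shrink to zero Euclidean diameter as they approach $S^1$, so the Euclidean closure of the hyperbolic $D'$-neighborhood of $\g^*$ meets $S^1$ only at the ideal endpoints of $\g^*$. Since the finite endpoint $\pi(v)$ lies in $X$, the only ideal endpoint available is $\del$, forcing $c_v(\infty)=\del$ and contradicting our assumption. The main obstacle is the middle step: guaranteeing that the closest-point parameters $s_n$ on $\g_n$ do not escape to infinity, so the limit point genuinely lies on $\g^*$. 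This is a short compactness argument once one notes that $\g_n(s)$ leaves any compact region of $X$ as $s\to\infty$ while the candidate limit $c_v(t)$ is fixed in $X$.
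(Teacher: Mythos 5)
Your argument is correct and follows essentially the same route as the paper: associate hyperbolic rays to the $c_{v_n}$ via the Morse Lemma, pass to the limiting hyperbolic ray from $\pi v$, and identify its ideal endpoint with $c_v(\infty)$ by the uniqueness of the hyperbolic ray at bounded distance from $c_v$. Your version merely phrases this as a proof by contradiction and spells out the compactness step for the closest-point parameters, which the paper leaves implicit.
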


\begin{proof}
Let $\g_n,\g\in\G$ be the hyperbolic geodesics connecting $\pi v_n, \pi v$ to $c_{v_n}(\infty), c_v(\infty)$, respectively. The Morse Lemma shows that for large $n$ the hyperbolic geodesics $\g_n, \g$ stay at bounded $d_h$-distance on long subsegments, their length increasing to $\infty$ with $n$. In the limit, by $\pi v_n\to \pi v$, we obtain a hyperbolic limit geodesic initiating from $\pi v$, with bounded distance from $c_v[0,\infty)$ and hence from $\g$. By the uniqueness of such $\g$, we obtain $\g_n\to \g$ with respect to the euclidean Hausdorff distance in $\C$ and in particular their endpoints converge in $S^1$.
\end{proof}

\section{Weak KAM solutions}\label{section weak KAM}

We continue to assume that $F,g_h$ are uniformly equivalent. For the proof of Theorem \ref{thm countable unstable}, we will use certain functions, that have been studied in various situations throughout the literature; in particular, they arise as Busemann functions, as we shall see below. We use the language of A. Fathi's weak KAM theory (cf. \cite{fathi} for an introduction).

\begin{defn}
 A function $u:X\to\R$ is called a \emph{forward weak KAM solution of direction $\del\in S^1$}, written $u\in\H_+(\del)$, if the following two conditions hold:
\begin{enumerate}
 \item $u(y)-u(x)\leq d_F(x,y)$ for all $x,y\in X$,

 \item for all $x\in X$ there exists a forward ray $c:[0,\infty)\to X$ with $c(0)=x$, $c(\infty)=\del$, $F(\dot c)=1$ and $u(c(t))-u(x) = t$ for all $t\in[0,\infty)$.
\end{enumerate}
We write $\J_+(u)$ for the set of all $v\in SX$, such that $u(c_v(t))-u(\pi v) = t$ for all $t\geq 0$, call the ray $c_v:[0,\infty)\to X$ \emph{$u$-calibrated} and set
\[ \J(u) := \bigcap_{t\geq 0} \phi_F^t\J_+(u). \]
The set of all forward weak KAM solutions is denoted by
\[ \H_+ := \bigcup_{\del\in S^1} \H_+(\del). \]
We write
\[ \om: \H_+ \to S^1, \quad  \om(u) = \del ~ :\iff ~ \J_+(u)\subset \RR_+(\del) \]
and call $\om(u)$ the \emph{asymptotic direction} of $u\in \H_+$.

The set of backward weak KAM solutions $\H_-$ is defined analogously, with analogous sets $\J_-(u)$ and $\J(u) = \cap_{t\leq 0}\phi_F^t\J_-(u)$ for $u\in\H_-$ and the asymptoptic direction
\[ \al: \H_- \to S^1, \quad  \al(u) = \del ~ :\iff ~ \J_-(u)\subset \RR_-(\del) . \] 
\end{defn}

We will show in Lemma \ref{as dir well-def}, that $\om:\H_+\to S^1$ is well-defined. Note that it follows from the definition (condition (1)) that $\J_+(u)\subset \RR_+$ and $\J(u)\subset \M$ for all $u\in\H_+$.

The following lemma is well-known in weak KAM theory and probably also in the theory of Busemann functions. If $u:X\to\R$ is differentiable in $x\in X$, write
\[ \grad_F u(x) := \LL_F^{-1}(du(x)), \quad \text{where } \LL_F(v) = \frac{1}{2} d_vF^2(v) \in T_{\pi v}^*X \subset T^*X. \]
Here $d_v$ denotes differentiation along the fiber, $\LL_F:TX\to T^*X$ is the Legendre transform associated to $F$. Note that if $F$ is Riemannian, then $\grad_Fu$ is the usual gradient.

\begin{lemma}\label{lemma fathi}
Let $u\in \H_+$. Then for all $t>0$, $u$ is differentiable in $\pi\phi_F^t\J_+(u)$. If $u$ is differentiable in $x\in X$, then
\[ \J_+(u)\cap T_xX = \{ \grad_Fu(x) \}. \]
\end{lemma}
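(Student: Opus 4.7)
The plan is to exploit the two-sided access to a $u$-calibrated geodesic through $x=\pi v$ that is available when $v=\phi_F^tw$ with $w\in\J_+(u)$ and $t>0$. Set $x_s:=c_w(t-s)$ for $0<s<t$ (backward anchors) and $y_s:=c_w(t+s)$ for $s>0$ (forward anchors). Calibration along $c_w$ yields $u(x)-u(x_s)=s=d_F(x_s,x)$ and $u(y_s)-u(x)=s=d_F(x,y_s)$. Combining these with the dominance inequalities $u(y)-u(x_s)\leq d_F(x_s,y)$ and $u(y_s)-u(y)\leq d_F(y,y_s)$ gives the two-sided pinching
\[ d_F(x,y_s) - d_F(y,y_s) \ \leq \ u(y)-u(x) \ \leq \ d_F(x_s,y) - d_F(x_s,x) \]
valid for every $y\in X$.

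Next I would observe that for $y$ in a small enough neighborhood of $x$ the minimizing geodesics from $x_s$ to $y$ and from $y$ to $y_s$ are unique and free of conjugate points, being small perturbations of subsegments of $c_w$. The Finsler first variation formula then gives that $y\mapsto d_F(x_s,y)$ and $y\mapsto d_F(y,y_s)$ are $C^\infty$ near $x$ with differentials at $x$ equal to $\LL_F(v)$ and $-\LL_F(v)$ respectively, since $v=\dot c_w(t)$ is simultaneously the arrival velocity of the first geodesic and the departure velocity of the second. Expanding both sides of the pinching to first order collapses it to
\[ u(y)-u(x) \ = \ \LL_F(v)(y-x) + o(|y-x|), \]
which is precisely differentiability of $u$ at $x$ together with $\grad_Fu(x)=v$.

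For the second assertion, suppose $u$ is differentiable at $x$ and $v\in\J_+(u)\cap T_xX$. The forward anchors $c_v(s)$ alone yield, by the same first-variation argument, the lower estimate $u(y)-u(x)\geq \LL_F(v)(y-x)+o(|y-x|)$; applying this inequality to $y$ and to a point whose local-coordinate offset is the negative of $y-x$, and combining with the exact expansion $u(y)-u(x)=du(x)(y-x)+o(|y-x|)$ from differentiability, forces $du(x)=\LL_F(v)$, so $v=\grad_Fu(x)$ is unique. The main step to justify carefully is the Finsler first variation identity for the differentials of $d_F(z,\cdot)$ and $d_F(\cdot,z)$ on the smooth locus of the distance function; this is standard, but because $F$ need not be reversible one must track which slot is being differentiated and distinguish the arrival from the departure velocity of the minimizer.
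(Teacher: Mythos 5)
Your argument is correct, but it takes a genuinely different route from the paper. The paper proves the lemma by a reduction to Fathi's weak KAM machinery: it introduces the Tonelli Lagrangian $L=\tfrac12(F^2+1)$, verifies that \mane's potential $\Phi_L$ coincides with $d_F$ (the only real computation being that $A_L$-minimizers are unit-speed $F$-geodesics), and then quotes Theorem 4.3.8 of \cite{fathi}, which asserts exactly the two claims for functions dominated by $L$. You instead give the direct squeeze argument that underlies that theorem: calibration on both sides of $x=c_w(t)$ pinches $u(\cdot)-u(x)$ between $d_F(x_s,\cdot)-s$ and $s-d_F(\cdot,y_s)$, which are differentiable at $x$ with common differential $\LL_F(v)$ because $x$ is interior to the minimizer $c_w|_{[t-s,t+s]}$ and hence outside the relevant forward/backward cut loci; the one-sided version handles the second claim. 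This is the classical Busemann-function differentiability proof, self-contained modulo the Finsler first-variation formula, and your care in distinguishing arrival from departure covectors is exactly what irreversibility requires; the paper's route is shorter given the reference and sets up the domination language used implicitly elsewhere. Two small points you should make explicit: (i) the justification for smoothness of $d_F(x_s,\cdot)$ and $d_F(\cdot,y_s)$ near $x$ is precisely that $x$ is an interior point of a minimizing segment, which is why the hypothesis $t>0$ is needed for the first claim; (ii) in the second claim $x$ is an endpoint, not an interior point, of the calibrated ray, so you should either note that the segment from $x$ to $c_v(s)$ extends to the longer minimizer $c_v|_{[0,2s]}$ (forcing uniqueness and non-conjugacy, hence differentiability of $d_F(\cdot,c_v(s))$ at $x$), or observe that a smooth upper support function for $d_F(\cdot,c_v(s))$ at $x$ suffices for the one-sided estimate. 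Neither is a gap, just a step to write out.
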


Note that $\J_+(u)$ is forward-invariant by the geodesic flow $\phi^t_F$ and that by Lemma \ref{lemma fathi} the set $\phi_F^\e\J_+(u)$ for $\e>0$ is a graph over the zero section $0_X$ (it is even locally Lipschitz by Theorem 4.11.5 in \cite{fathi}). We will refer to this fact as the \emph{graph property} of $\J_+(u)$.

\begin{proof}
Define the Lagrangian $L=\frac{1}{2}(F^2+1)$, then $L$ is a Tonelli Lagrangian (the fact that it is only $C^1$ in $0_X$ does not matter in this situation). Write $A_L(c)=\int L(\dot c)dt$ for the action and consider \mane's potential
\[ \Phi_L(x,y) := \inf\left\{ A_L(c) : T>0, c:[0,T]\to X ~ C^{ac}, c(0)=x,c(T)=y \right\}. \]

We claim that $\Phi_L=d_F$. This shows that $u$ is dominated by $L$ in the sense of Fathi \cite{fathi}, such that the lemma is just a reformulation of Theorem 4.3.8 in \cite{fathi}.

Proof of the claim: Observe that the action of any closed curve in $X$ with respect to $L-1/2$ is non-negative and, using Proposition 5.11 in \cite{sorrentino}, one finds for each $x,y\in X$ a time $T>0$ and a $C^{ac}$ curve $c:[0,T]\to X$ with $c(0)=x,c(T)=y$, such that $A_L(c)=\Phi_L(x,y)$. The energy function of $F^2/2$ is just $F^2/2$ itself and $c$ is also critical for the action with respect to $F^2/2$ when fixing the connection time, hence we obtain $F^2(\dot c)=\const$. Consider for $s>0$ the reparametrizations $c_s:[0,T/s]\to X$, $c_s(t) = c(st)$. By minimality of $c=c_1$ under all $c_s$ and homogeneity of $F$, one easily shows
\[ A_L(c_s)=T/2 \cdot(s F^2(\dot c) + 1/s), \quad 0 = \frac{d}{ds}\bigg|_{s=1} \frac{2}{T} A_L(c_s) = F^2(\dot c) - 1, \]
and hence $F(\dot c)=1$. Thus, by minimality of $c$ with respect to $A_L$
\begin{align*}
d_F(x,y) & \leq l_F(c) = \int_0^T 1 dt = \int_0^T L(\dot c) dt = \Phi_L(x,y).
\end{align*}
On the other hand, if $c:[0,d_F(x,y)]\to X$ is a minimal geodesic segment from $x$ to $y$ with $F(\dot c)=1$, we obtain the other inquality:
\[ d_F(x,y) = l_F(c) = \int_0^{d_F(x,y)} 1 dt = \int_0^{d_F(x,y)} L(\dot c) dt \geq \Phi_L(x,y) . \]
\end{proof}

\begin{lemma}\label{as dir well-def}
If $u\in\H_+(\del)$, then $\J_+(u)\subset \RR_+(\del)$.
\end{lemma}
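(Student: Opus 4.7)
The plan is to take $v \in \J_+(u)$, set $x = \pi v$, and show that the forward ray $c_v$ has the prescribed endpoint $\del$ at infinity by combining the graph property of $\J_+(u)$ (Lemma \ref{lemma fathi}) with the existence clause (2) in the definition of $\H_+(\del)$. The essential observation is that condition (1) of the weak KAM definition and a direct computation show $\J_+(u)$ is forward-invariant under the geodesic flow: if $u(c_v(t))-u(\pi v)=t$ for all $t\geq 0$, then for any $s>0$ the translated ray $c_{\phi_F^sv}(t)=c_v(s+t)$ also calibrates $u$, so $\phi_F^sv\in \J_+(u)$.

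First I would fix some $t>0$ and set $y=c_v(t)\in\pi\phi_F^t\J_+(u)$. By Lemma \ref{lemma fathi}, $u$ is differentiable at $y$ and
\[ \J_+(u)\cap T_yX = \{\grad_Fu(y)\}. \]
By the forward invariance noted above, $\phi_F^tv\in\J_+(u)\cap T_yX$, so $\phi_F^tv=\grad_Fu(y)$. Now I would apply condition (2) of the definition of $\H_+(\del)$ at the point $y$: there exists a forward ray $c':[0,\infty)\to X$ with $c'(0)=y$, $c'(\infty)=\del$, $F(\dot c')=1$ and $u(c'(s))-u(y)=s$ for all $s\geq 0$. This $u$-calibration is exactly the statement $\dot c'(0)\in \J_+(u)\cap T_yX$, hence $\dot c'(0)=\grad_Fu(y)=\phi_F^tv$.

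By uniqueness of solutions to the geodesic equation this forces $c'(s)=c_v(t+s)$ for all $s\geq 0$, so the tail of $c_v$ coincides with $c'$ and therefore $c_v(\infty)=c'(\infty)=\del$. This gives $v\in \RR_+(\del)$, as desired. The only subtle point is that $u$ need not be differentiable at the base point $x$ itself, which is why the argument is carried out at $y=c_v(t)$ for some $t>0$ where the graph property of Lemma \ref{lemma fathi} is available; once the conclusion is reached on $[t,\infty)$, the asymptotic direction at $\infty$ follows automatically.
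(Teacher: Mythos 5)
Your argument is correct and is essentially the paper's own proof: both apply condition (2) of the weak KAM definition at the point $c_v(t)$ for $t>0$ and use Lemma \ref{lemma fathi} to identify the calibrated ray issuing from that point with the tail of $c_v$. Your explicit verification of the forward invariance of $\J_+(u)$ and of why the argument must be carried out at $c_v(t)$ rather than at $\pi v$ is a welcome bit of extra care, but not a different route.
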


\begin{proof}
Let $v\in \J_+(u)$ and $t>0$. By definition, there exists a minimal ray $c:[0,\infty)\to X$ with $\dot c\in\J_+(u)$ and $c(0)=c_v(t), c(\infty)=\del$. By Lemma \ref{lemma fathi} we find $\dot c(0)=\dot c_v(t)$, i.e. also $c_v(\infty)=c(\infty)=\del$.
\end{proof}

We have the following corollary to Lemma \ref{lemma fathi}.

\begin{cor}\label{J=J' then u=u'}
If $u,u'\in\H_+$ with $\J_+(u)=\J_+(u')$, then $u-u'$ is constant.
\end{cor}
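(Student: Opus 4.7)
The plan is to reduce the claim to the standard fact that a locally Lipschitz function on the connected manifold $X$ whose differential vanishes almost everywhere must be constant. First I would check that $u$ and $u'$ are locally Lipschitz: condition (1) in the definition of a weak KAM solution gives $|u(y)-u(x)|\leq \max\{d_F(x,y),d_F(y,x)\}$, and since $F$ is uniformly equivalent to $g_h$, this is dominated by a multiple of $d_h(x,y)$; the same applies to $u'$. Hence by Rademacher's theorem (applied in local charts), both $u$ and $u'$ are differentiable almost everywhere on $X$.

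Next, I would pin down the differentials pointwise using Lemma \ref{lemma fathi}. At every $x \in X$, condition (2) produces a $u$-calibrated ray starting at $x$, so $\J_+(u) \cap T_xX$ is non-empty (and likewise for $u'$, which by assumption is the same set). If $u$ is additionally differentiable at $x$, Lemma \ref{lemma fathi} gives $\J_+(u)\cap T_xX = \{\grad_F u(x)\}$; the analogous identity holds for $u'$. At a point $x$ where both $u$ and $u'$ are differentiable, the equality $\J_+(u) = \J_+(u')$ then forces $\grad_F u(x) = \grad_F u'(x)$, and since the Legendre transform $\LL_F$ is a bijection between fibers, also $du(x) = du'(x)$.

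Consequently $d(u-u') = 0$ almost everywhere on $X$. Because $u-u'$ is locally Lipschitz and $X$ is connected, this forces $u-u'$ to be constant, which is the desired conclusion. The only mild obstacle is ensuring that Lemma \ref{lemma fathi} applies at almost every point simultaneously for $u$ and $u'$; this is handled automatically once one observes that condition (2) guarantees $\J_+(u)\cap T_xX \neq \emptyset$ everywhere, so the lemma can be invoked at any $x$ lying in the full-measure set of common differentiability. An alternative, more hands-on route would be to note directly that $u-u'$ is constant along every ray in $\J_+(u)=\J_+(u')$ (both $u$ and $u'$ increase by $t$ along such a ray) and then propagate this between rays; but the Lipschitz/a.e.-differential argument is shorter.
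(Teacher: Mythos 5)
Your proof is correct and follows essentially the same route as the paper: both arguments invoke Rademacher's theorem on the full-measure set where $u$ and $u'$ are simultaneously differentiable, apply Lemma \ref{lemma fathi} together with $\J_+(u)=\J_+(u')$ to conclude $du=du'$ there, and deduce that the Lipschitz function $u-u'$ is constant. Your write-up merely spells out the intermediate steps (the Lipschitz bound from condition (1) and the injectivity of the Legendre transform) in more detail than the paper does.
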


\begin{proof}
Let $U\subset X$ be the set where both $u,u'$ are differentiable. $U$ has full measure by Rademacher's Theorem (weak KAM solutions are Lipschitz by definition). Lemma \ref{lemma fathi} and $\J_+(u)=\J_+(u')$ show $du(x)=du'(x)$ for all $x\in U$. The claim follows.
\end{proof}

We now show $\H_+\neq \emptyset$. There is a classical way to construct weak KAM solutions by considering so-called Busemann functions or horofunctions. For this, let $x_0\in X$ and $x_n\in X$ be a sequence with $d_h(x_0,x_n)\to \infty$. Any $C_{loc}^0$ limit $u\in C^0(X)$ of the sequence of functions
\[ x\mapsto d_F(x_0,x_n) - d_F(x,x_n)  \]
is called a horofunction. If $v\in \RR_+$ and $x_n=c_v(n)$, then $u$ is called the Busemann function of $c_v$.

\begin{lemma}\label{existence horofunction}
If $x_0,x_n\in X$ with $d_h(x_0,x_n)\to\infty$, then for the sequence of functions $d_F(x_0,x_n) - d_F(. ,x_n) : X\to \R$ there exist $C_{loc}^0$ limit functions $u\in C^0(X)$. Moreover, any limit $u$ belongs to $\H_+$.
\end{lemma}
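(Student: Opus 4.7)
The plan is to apply Arzelà--Ascoli to produce the $C^0_{loc}$ limit, and then verify the two defining conditions of $\H_+$ by transferring the relevant inequalities from $u_n(x) := d_F(x_0,x_n) - d_F(x,x_n)$ to the limit $u$.

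First I would establish equicontinuity. The triangle inequality for $d_F$ yields
$$|u_n(x) - u_n(y)| \le \max\bigl(d_F(x,y),\,d_F(y,x)\bigr) \le c_F\, d_h(x,y),$$
using $F \le c_F\|\cdot\|_h$ from Definition \ref{def finsler}. Together with $u_n(x_0) = 0$, this makes $\{u_n\}$ equi-Lipschitz and locally uniformly bounded, so Arzelà--Ascoli extracts a $C^0_{loc}$-convergent subsequence with continuous limit $u$. Because $d_h(x_0,x_n) \to \infty$ and $\bar X = X\cup S^1$ is compact in the Euclidean topology, I may further extract so that $x_n \to \delta \in S^1$. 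Condition (1) of the weak KAM definition is immediate: $d_F(x,x_n) \le d_F(x,y) + d_F(y,x_n)$ rearranges to $u_n(y) - u_n(x) \le d_F(x,y)$, and this survives the limit.

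For condition (2), I would fix $x \in X$ and pick minimal $F$-geodesic segments $c_n:[0,T_n]\to X$ from $x$ to $x_n$ with $F(\dot c_n)=1$ and $T_n = d_F(x,x_n)\to\infty$. Compactness of the $F$-unit sphere $S_xX \subset T_xX$, plus continuous dependence of geodesics on initial data, yields (along a subsubsequence) $\dot c_n(0) \to v \in S_xX$ and $c_n(t) \to c_v(t)$ uniformly on compact $t$-intervals. Minimality of $c_n|_{[s,t]}$ gives $d_F(c_n(s),c_n(t)) = t-s$; passing to the limit by continuity of $d_F$ shows $c_v:[0,\infty)\to X$ is a forward ray. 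Calibration then follows from
$$u(c_v(t)) - u(x) \;=\; \lim_n\bigl[d_F(x,x_n) - d_F(c_v(t),x_n)\bigr] \;=\; \lim_n\bigl[t + \bigl(d_F(c_n(t),x_n) - d_F(c_v(t),x_n)\bigr)\bigr] \;=\; t,$$
using $d_F(x,x_n) = t + d_F(c_n(t),x_n)$ (minimality of $c_n$) and $d_F(c_v(t),x_n) - d_F(c_n(t),x_n)\to 0$ (triangle inequality with $c_n(t)\to c_v(t)$). Finally, the Morse Lemma (Theorem \ref{morse lemma}) places each $c_n$ within hyperbolic distance $D$ of the hyperbolic segment from $x$ to $x_n$; since $x_n\to\delta$ in $\bar X$, the limit $c_v$ lies within $D$ of the hyperbolic ray from $x$ to $\delta$, forcing $c_v(\infty)=\delta$. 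Thus $u \in \H_+(\delta) \subset \H_+$.

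The hard part I anticipate is securing a \emph{uniform} asymptotic direction across all base points: the subsequence extraction producing $v$ depends on $x$, yet $c_v(\infty) = \delta$ must be the same $\delta$ for every $x$ if $u$ is to lie in a single class $\H_+(\delta)$. The Morse Lemma is the decisive ingredient here, because it links the endpoint at infinity of any limit ray emanating from $x$ to the Euclidean limit of $(x_n)$ in $\bar X$, independently of the base point $x$ used to construct the ray.
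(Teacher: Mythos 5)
Your proof is correct and follows essentially the same route as the paper: equi-Lipschitz bounds plus Arzel\`a--Ascoli for existence of the $C^0_{loc}$ limit, the triangle inequality for condition (1), and limits of unit-speed minimal segments $c_n$ from $x$ to $x_n$ together with the Morse Lemma for condition (2). The only cosmetic difference is how the common direction $\delta$ is pinned down: the paper compares every limit ray with a reference limit ray built from the segments $x_0 \to x_n$, whereas you extract $x_n \to \delta \in S^1$ and compare with the hyperbolic ray from $x$ to $\delta$ --- both are valid applications of the Morse Lemma.
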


In particular, associating the Busemann function $u_v\in\H$ to $v\in\RR^+$ shows
\[ \RR_+(\del) = \bigcup_{u\in\H_+(\del)} \J_+(u) . \]

\begin{proof}
There exists a constant $c_F$, such that $\frac{1}{c_F} d_F \leq d_h \leq c_F\cdot d_F$ due to uniform equivalence of $F,g_h$ by assumption. Let $u_n := d_F(x_0,x_n) - d_F(. ,x_n)$, then by the triangle inequality
\[ u_n(x)-u_n(y) = d_F(y ,x_n)- d_F(x ,x_n) \leq d_F(y,x)\leq c_F\cdot d_h(x,y) \]
and by symmetry of $d_h$, the functions $u_n$ are Lipschitz with Lipschitz constant $c_F$. Using $u_n(x_0)=0$ for all $n$, the Arzela-Ascoli Theorem shows that the $u_n$ have a $C_{loc}^0$ convergent subsequence with limit in $C^0(X)$. After passing to a subsequence, we now assume $u=\lim u_n$. We then obtain for $x,y\in X$ as before
\[ u(y) - u(x) = \lim u_n(y)-u_n(x) \leq d_F(x,y). \]
Now let $x\in X$ and $c_n:[0,d_F(x,x_n)]\to X$ be a minimal segment from $x$ to $x_n$ with $F(\dot c_n)=1$. Choose a convergent subsequence of $\dot c_n(0)$ with limit $v$, then $c_v:[0,\infty)\to X$ is a minimal ray. Similarly, let $c_n^0$ connect $x_0$ to $x_n$ with $\dot c_n(0)\to v_0$. All $c_n,c_n^0$ and hence the limits $c_v,c_{v_0}$ have uniformly bounded distance by the Morse Lemma. Setting $\del:=c_{v_0}(\infty)$, we have $c_v(\infty)=c_{v_0}(\infty)=\del$ for all so obtained $v$. We have to show $u(c_v(t)) - u(x) = t$ for $t\geq 0$. The triangle inequality for $d_F$ and $c_n(t)\to c_v(t)$ for $n\to\infty$ show $d_F(c_n(t),x_n)-d_F(c_v(t),x_n)\to 0$. Hence, using minimality of $c_n$, we have
\begin{align*}
&~ u(c_v(t)) - u(x) = \lim u_n (c_v(t)) - u_n(x) = \lim d_F(x ,x_n)- d_F(c_v(t) ,x_n) \\
= &~ \lim d_F(x ,x_n)- d_F(c_v(t) ,x_n) + d_F(c_n(t),x_n) - d_F(c_n(t),x_n) \\
= &~ \lim d_F(x ,x_n) - (d_F(x,x_n) - t) = t.
\end{align*}
\end{proof}

So far, everything in this section works for manifolds of hyperbolic type (i.e. possessing a metric $g_h$ of negative curvature) in any dimension. For the next proposition, we need $\dim M=2$. Recall the definition of the bounding geodesics $c_\g^0,c_\g^1$ of $\M(\g)$ in Lemma \ref{bounding geodesics morse}. In each $\J(u)$ there are similar bounding geodesics.

\begin{prop}\label{bounding geodesics for rays}
Given $u\in\H_+$ and $\g\in\G_+(\om(u))$, there exist two particular non-intersecting, minimal geodesics $c_{\g,u}^0,c_{\g,u}^1$ in $\J(u)\cap\M(\g)$, such that all minimal geodesics in $\J(u)\cap\M(\g)$ lie between $c_{\g,u}^0,c_{\g,u}^1$. Moreover, for any $x\in X$, there exists a unique $\g\in\G_+(\om(u))$, such that $x$ lies between $c_{\g,u}^0,c_{\g,u}^1$.
\end{prop}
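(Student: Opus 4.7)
The plan is to first show that $\J(u)\cap\M(\g)$ is nonempty for every $\g\in\G_+(\om(u))$, then use the graph property of $\J(u)$ to conclude that curves in $\J(u)\cap\M(\g)$ are pairwise disjoint so that extremal bounding curves can be extracted as limits, and finally to deduce the covering and uniqueness statement by applying the graph property to bounding curves of different $\g$'s. For the nonemptiness, set $\del=\om(u)$ and choose base points $x_n\in X$ at bounded $d_h$-distance from $\g$ with $x_n\to\g(-\infty)$. For each $n$ select $v_n\in\J_+(u)\cap T_{x_n}X$, which exists by the definition of $\H_+$. By the Morse Lemma the calibrated ray $c_{v_n}$ lies in a $D$-neighbourhood of the hyperbolic ray from $x_n$ to $\del$, which converges locally uniformly to $\g$. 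Shifting time by $t_n\to\infty$ so that the base points of $w_n:=\phi_F^{t_n}v_n$ lie in a fixed compact set of $X$, and passing to a convergent subsequence $w_n\to v$, the bi-infinite limit geodesic $c_v:\R\to X$ stays $d_h$-close to $\g$ throughout, so $v\in\M(\g)$ by another application of the Morse Lemma. Since $w_n\in\phi_F^s\J_+(u)$ whenever $s\le t_n$, and each $\phi_F^s\J_+(u)$ is closed (being a locally Lipschitz graph over a closed base by Lemma \ref{lemma fathi}), the limit $v$ belongs to $\phi_F^s\J_+(u)$ for every $s\ge 0$, i.e. $v\in\J(u)$.

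For the bounding curves, the graph property provided by Lemma \ref{lemma fathi} (applied to $\phi_F^\e\J_+(u)\supset\J(u)$ for each $\e>0$) forces any two curves $c_1,c_2\in\J(u)\cap\M(\g)$ whose projections meet at a point to share a velocity there and hence to coincide as geodesics. Thus the projections of curves in $\J(u)\cap\M(\g)$ to $X$ are pairwise disjoint and linearly ordered by their transversal position inside the strip bounded by $c_\g^0,c_\g^1$ (Lemma \ref{bounding geodesics morse}). Passing to a limit of a maximally rightward (resp. leftward) sequence of such curves yields vectors in $\J(u)\cap\M(\g)$, since both $\J(u)$ (intersection of closed sets) and $\M(\g)$ (closed by Lemma \ref{as dir cont rays} and its backward analogue) are closed; these limits define $c_{\g,u}^0$ and $c_{\g,u}^1$.

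For the final assertion, fix $x\in X$. All bounding curves $c_{\g,u}^i$ for $\g\in\G_+(\del)$ lie in $\J(u)$, so again by the graph property any two such curves are disjoint or coincide; two bounding curves belonging to $\g\ne\g'$ have distinct backward endpoints and hence must be disjoint. Parametrising $\G_+(\del)$ by $s=\g(-\infty)\in S^1-\{\del\}$, the strips $S(\g)$ vary monotonically with $s$, and a standard connectedness argument on the partition of $S^1-\{\del\}$ into the $s$ for which $S(\g_s)$ lies strictly to the left (resp. right) of $x$ produces a boundary value $s_0$ with $x\in\overline{S(\g_{s_0})}=S(\g_{s_0})$, giving existence. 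Uniqueness follows because $x\in S(\g)\cap S(\g')$ with $\g\ne\g'$ would force two bounding curves with distinct backward endpoints to meet at $x$, contradicting the graph property. The main obstacle is Step one: the limit vector $v$ must be simultaneously shown to lie in every $\phi_F^s\J_+(u)$ (requiring closedness of these flow images together with forward invariance of $\J_+(u)$) and to have the prescribed backward asymptotic $\g(-\infty)$ (requiring careful use of the Morse Lemma on bi-infinite limits). The covering statement inherits this difficulty, as ruling out gaps between strips as $s$ varies continuously ultimately relies on the bounding curves existing inside every $\M(\g)\cap\J(u)$.
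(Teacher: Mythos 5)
Your proposal is correct and follows essentially the same route as the paper: non-emptiness of $\J(u)\cap\M(\g)$ by taking limits of calibrated rays issuing from points tending to $\g(-\infty)$ (controlled by the Morse Lemma), extraction of the bounding curves from the graph property and closedness of $\J(u)$, and the covering/uniqueness statement from the fact that calibrated geodesics of distinct backward endpoints cannot cross while every intermediate direction carries a calibrated geodesic. The only cosmetic difference is that you phrase the last step as a connectedness argument on $S^1-\{\om(u)\}$, whereas the paper takes the two calibrated geodesics closest to $x$ on either side and derives a contradiction from an intermediate $\g'$; both arguments rest on exactly the same two ingredients.
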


\begin{proof}
We first show that for each $\g\in\G_+(\om(u))$, there is at least one geodesic in $\J(u)\cap\M(\g)$. For this, we take $x_n=\g(-n)$ and a ray $c_n:[0,\infty) \to X$ in $\J_+(u)$ initiating from $x_n$. Each $c_n$ has bounded distance from $\g$, independently of $n$, by the Morse Lemma. With $n\to\infty$, we obtain as a limit a minimal geodesic in $\J(u)\cap\M(\g)$. By the closedness of $\pi(\J(u)\cap\M(\g))$ and the graph property of $\J(u)$, there are rightmost and leftmost geodesics $c_{\g,u}^0,c_{\g,u}^1$ in $\J(u)\cap\M(\g)$; the first claim follows.

Now let $x\in X$ and assume that $x\notin \pi\J(u)$. Then there are two minimal geodesics $c_0,c_1$ in $\J(u)$ lying closest to $x$ on either side by the closedness of $\pi\J(u)$. We have to show that $c_0(-\infty)=c_1(-\infty)$ and assume the contrary. Choose then a hyperbolic geodesic $\g'\in \G_+(\om(u))$ with $\g'(-\infty)$ in the open segment $\sig\subset S^1$ between $c_i(-\infty)$, such that $\om(u)\notin\sig$. There exists $v'\in \J(u)\cap \M(\g')$, such that $c_{v'},c_0,c_1$ cannot intersect and $c_{v'}$ lies between $c_0,c_1$, contradicting the assumption that $c_0,c_1$ are closest to $x$.
\end{proof}

\subsection{Unstable geodesics}\label{section unstable}

We repeat the definition of instability.

\begin{defn}
A forward ray $c:[0,\infty)\to X$ is called \emph{forward unstable}, if for all minimal rays $c':[0,\infty)\to X$ with $c'(0)\in c(0,\infty)$ and $c'(\infty)=c(\infty)$ we have $c'[0,\infty)\subset c(0,\infty)$, i.e. $c'$ is a subray of $c$.

Backward instability of backward rays $c:(-\infty,0]\to X$ is defined analogously. A minimal geodesic is called \emph{unstable}, if it is both forward and backward unstable.
\end{defn}

Instability can be characterized by weak KAM solutions.

\begin{prop}\label{unstable Aubry}
For $v\in\RR_+(\del)$, the minimal ray $c_v:[0,\infty)\to X$ is forward unstable if and only if
\[ v\in \A_+(\del) :=  \bigcap_{u\in\H_+(\del)} \J_+(u). \]
In particular, the set of forward unstable rays $\A_+(\del)\subset\RR_+(\del)$ is closed.
\end{prop}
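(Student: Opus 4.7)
My plan is to prove the biconditional by a short argument in each direction, using the Busemann function construction of Lemma \ref{existence horofunction} for one direction and the defining calibration property of weak KAM solutions for the other; the closedness statement will then follow immediately from the definition of $\A_+(\del)$ as an intersection.

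For the implication $v\in\A_+(\del)\Rightarrow c_v$ forward unstable, I would argue by contradiction. Suppose there is a minimal ray $c':[0,\infty)\to X$ with $c'(0)=c_v(a)$ for some $a>0$, $c'(\infty)=\del$, that is not a subray of $c_v$. Let $u$ be the Busemann function of $c'$ from Lemma \ref{existence horofunction}. Then $u\in\H_+$ with $\dot c'(0)\in\J_+(u)$, and Lemma \ref{as dir well-def} identifies $\om(u)=\del$, so $u\in\H_+(\del)$. Since $v\in\A_+(\del)\subset\J_+(u)$ and $\J_+(u)$ is forward invariant under $\phi_F^t$, we get $\dot c_v(a)\in\J_+(u)$. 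Hence $c_v(a)=\pi\phi_F^av\in\pi\phi_F^a\J_+(u)$, so by Lemma \ref{lemma fathi} the function $u$ is differentiable at $c_v(a)$ and $\J_+(u)\cap T_{c_v(a)}X$ is the single vector $\grad_Fu(c_v(a))$. Therefore $\dot c'(0)=\dot c_v(a)$, and uniqueness of $F$-geodesics with prescribed initial vector forces $c'=c_v|_{[a,\infty)}$, contradicting the choice of $c'$.

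For the converse, assume $c_v$ is forward unstable and fix an arbitrary $u\in\H_+(\del)$. For each $a>0$, condition (2) in the definition of a weak KAM solution provides a $u$-calibrated ray $c^a:[0,\infty)\to X$ with $c^a(0)=c_v(a)$ and $c^a(\infty)=\del$. Forward instability of $c_v$ forces $c^a$ to be a subray of $c_v$; since unit-speed minimizing rays are injective, this pins it down to $c^a(t)=c_v(a+t)$ for all $t\geq 0$. Thus $u(c_v(a+t))-u(c_v(a))=t$ for every $t\geq 0$ and every $a>0$, and letting $a\downarrow 0$ and using continuity (in fact Lipschitz-ness) of $u$ yields $u(c_v(t))-u(c_v(0))=t$ for all $t\geq 0$, i.e.\ $v\in\J_+(u)$. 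As $u\in\H_+(\del)$ was arbitrary, $v\in\A_+(\del)$. Finally, closedness is immediate: each $\J_+(u)$ is the common zero set of the continuous functions $v\mapsto u(c_v(t))-u(\pi v)-t$ over $t\geq 0$, hence closed in $SX$, and $\A_+(\del)$ is an intersection of such closed sets. The main obstacle is really just the first implication, where one must be careful to land the Busemann function in the correct class $\H_+(\del)$ and then invoke the graph property of Lemma \ref{lemma fathi} at the right base point; once that is in place, everything else is routine.
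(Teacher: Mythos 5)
Your proof is correct and follows essentially the same route as the paper: the direction $v\in\A_+(\del)\Rightarrow$ unstable via the Busemann function of the competing ray together with the graph property of Lemma \ref{lemma fathi}, and the converse via calibrated rays forced by instability to be subrays, plus closedness of $\J_+(u)$. The only nitpick is that $\om(u)=\del$ for the Busemann function of $c'$ follows from the construction in (the proof of) Lemma \ref{existence horofunction} rather than from Lemma \ref{as dir well-def}, which states the reverse implication.
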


\begin{remark}
A similar set $\A_+(\del)$ appears in the setting of A. Fa\-thi's weak KAM theory. We called it the \emph{forward Aubry set of direction $\del$}. It is not clear to us, whether $\A_+(\del)\neq\emptyset$ for all $\del$. We do not expect any kind of continuity of $\del\mapsto \A_+(\del)$.
\end{remark}

\begin{proof}
Let $v\in\RR_+(\del)$ be forward unstable, $t>0$ and $u\in\H_+(\del)$. For $x=c_v(t)$, we find a ray $c:[0,\infty)\to X$ in $\J_+(u)$, such that $c(0)=x$. But due to $\J_+(u)\subset \RR_+(\del)$ and the instability of $c_v$, we have $c[0,\infty)=c_v[t,\infty)$, showing by closedness of $\J_+(u)$, that $v\in\J_+(u)$.

Conversely, let $v\in\A_+(\del)$, $t>0$ and assume that $c:[0,\infty)\to X$ is a ray in $\RR_+(\del)$ initiating from $c(0)=c_v(t)$. The Busemann function $u$ of $c$ then belongs to $\H_+(\del)$, while by assumption $v\in \J_+(u)$. Lemma \ref{lemma fathi} shows differentiability of $u$ in $c_v(t)$ and $\dot c(0)=\dot c_v(t)$, i.e. $c$ is a subray of $c_v$.
\end{proof}

For later reference, we prove the following lemma, showing a partial instability of the bounding geodesics $c_\g^i$ of $\M(\g)$ from Lemma \ref{bounding geodesics morse}. The lemma was basically already known to Morse.

\begin{lemma}\label{partial instability bounding}
If $\g\in \G$ and $S\subset X$ is the closed strip between $c_\g^0,c_\g^1$, then any ray $c:[0,\infty)\to X$ initiating in $S$ with $c(\infty)=\g(\infty)$ lies entirely in $S$.

The analogous statement holds for backward rays with $c(-\infty)=\g(-\infty)$.
\end{lemma}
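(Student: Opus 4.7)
The plan is to argue by contradiction using the crossing Lemma \ref{crossing minimals} together with the asymptotic control provided by the Morse Lemma. Let $c:[0,\infty)\to X$ be a forward ray with $c(0)\in S$ and $c(\infty)=\g(\infty)$, and suppose $c$ exits $S$; by the symmetry of the argument, I may assume that the exit happens across $c_\g^0$. Set $t_0:=\inf\{t\geq 0: c(t)\notin S\}$ and write $c(t_0)=c_\g^0(a_0)$ for some $a_0\in\R$. A first small observation is that $\dot c(t_0)\neq \dot c_\g^0(a_0)$: otherwise uniqueness of the geodesic flow would force $c|_{[t_0,\infty)}$ to coincide with the subray $c_\g^0|_{[a_0,\infty)}\subset S$, contradicting that $c$ leaves $S$ at time $t_0$.

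The next step is the application of Lemma \ref{crossing minimals}. To meet its positivity hypothesis $a>0$, I would shift the parametrization of $c_\g^0$ by setting $v:=\dot c_\g^0(a_0-1)$---a valid forward ray vector since $c_\g^0$ is minimal on all of $\R$---and $w:=\dot c(t_0)\in \RR_+$. Then $\pi w=c_v(1)$ and $w\neq \dot c_v(1)$, so the lemma, applied with the constant sequence $v_n\equiv v$, supplies constants $\delta,\mu>0$ with $d_h(c_v(s), c_w(t))\geq \mu$ for all $s\geq 1$ and all $t\geq \delta$. The contradiction comes from letting $s,t\to\infty$: both $c_v$ and $c_w$ are forward rays with endpoint $\g(\infty)$, so by the Morse Lemma each stays within $d_h$-distance $D$ of a hyperbolic ray to $\g(\infty)$, and any two such hyperbolic rays are asymptotic in $d_h$. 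Hence $\liminf_{s,t\to\infty} d_h(c_v(s),c_w(t))=0$, contradicting $\mu>0$.

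The backward statement is the time-reversed analogue, carried out with $c_\g^1$ at the endpoint $\g(-\infty)$. I expect the only real subtlety to lie in this setup step: reducing the general exit configuration---including the boundary case $t_0=0$ where $c$ departs directly from $\partial S$---to the precise hypothesis $\pi w=c_v(a)$ with $a>0$ and $w\neq \dot c_v(a)$ required by Lemma \ref{crossing minimals}. Once this bookkeeping is in place, the crossing lemma and the Morse Lemma together close the argument immediately.
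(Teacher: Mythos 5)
There is a genuine gap at the final step of your argument. After invoking Lemma \ref{crossing minimals} to get a uniform positive lower bound $\mu$ on $d_h(c_v(s),c_w(t))$, you try to contradict it by claiming that two forward $F$-rays with the same endpoint $\g(\infty)$ must satisfy $\liminf_{s,t\to\infty} d_h(c_v(s),c_w(t))=0$. This does not follow from the Morse Lemma: each ray lies within $d_h$-distance $D$ of a hyperbolic ray to $\g(\infty)$, and the two hyperbolic rays are indeed asymptotic, but this only yields a \emph{bounded} distance (roughly $2D$) between the $F$-rays, not convergence to $0$. Whether asymptotic $F$-rays actually approach each other is precisely the content of the width $w(\g)=\liminf_{t\to\infty}d_h(c_\g^0(\R),c_\g^1(t))$, which the paper is at pains to analyze and which is genuinely positive in general --- for instance, for a periodic direction with a nontrivial strip of minimal geodesics (Proposition \ref{width periodic intro}), the two bounding geodesics $c_\g^0,c_\g^1$ both have endpoint $\g(\infty)$ yet stay a uniformly positive distance apart. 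So the conclusion of the crossing lemma is perfectly consistent with the Morse Lemma, and no contradiction arrives.

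The paper's proof takes a different and essentially unavoidable route: supposing $c$ initiates on $c_\g^1$ and leaves $S$ to the left, it considers minimal segments $c_n$ from $c_\g^1(-n)$ to $c(n)$; by the Morse Lemma these converge to a minimal geodesic in $\M(\g)$, and since minimal curves cannot cross twice this limit must lie (weakly) left of $c_\g^1$, i.e.\ outside $S$ --- contradicting Lemma \ref{bounding geodesics morse}, which says every element of $\M(\g)$ lies between $c_\g^0$ and $c_\g^1$. The key input is thus the \emph{bounding} property of $c_\g^0,c_\g^1$ from Lemma \ref{bounding geodesics morse}, which your argument never uses; without it one cannot rule out a ray that starts in $S$ and then runs parallel to, but outside of, the strip forever.
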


\begin{proof}
Suppose $c:[0,\infty)\to X$ is a ray with $c(0)\in c_\g^1$ and $c(\infty)=\g(\infty)$, leaving $S$. Consider a sequence of minimal segments $c_n$ from $c_\g^1(-n)$ to $c(n)$, converging to a minimal geodesic $c$ in $\M(\g)$ by the Morse Lemma. But due to minimality of $c_\g^1, c$, it has to lie left of $c_\g^1$, contradicting the fact that all minimal geodesics in $\M(\g)$ lie right of $c_\g^1$ by definition.
\end{proof}

\subsection{Proof of Theorem \ref{thm countable unstable} and its corollary}

We prove a lemma. Recall the definition of $c_{\g,u}^i$ in Proposition \ref{bounding geodesics for rays} for $\g\in\G, u\in \H_\pm, i\in\{0,1\}$.

\begin{lemma}\label{countable 1}
Let $\del\in S^1$ and $u\in \H_-(\del)$, then for all but countably many $\g\in\G_-(\del)$ we have $c_{\g,u}^0=c_{\g,u}^1$, which is then forward unstable.
\end{lemma}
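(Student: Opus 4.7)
The plan is to handle the two assertions of the lemma separately: first the countability of $\g$ with $c_{\g,u}^0\neq c_{\g,u}^1$, and then the forward instability of the common geodesic when they agree.

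For the countability I would invoke the time-reversed version of Proposition~\ref{bounding geodesics for rays}: the closed strips $S_\g$ bounded by $c_{\g,u}^0$ and $c_{\g,u}^1$ partition $X$ as $\g$ ranges over $\G_-(\del)$. A strip with $c_{\g,u}^0\neq c_{\g,u}^1$ has non-empty interior, and since the strips are pairwise disjoint and $X$ is second countable, only countably many of them can have non-empty interior. This gives the first claim.

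For the forward instability, write $c_\g:=c_{\g,u}^0=c_{\g,u}^1$ and $\del':=\g(\infty)$ and argue by contradiction: assume there is a minimal ray $c':[0,\infty)\to X$ with $c'(0)=c_\g(T)$ for some $T>0$, $c'(\infty)=\del'$ and $\dot c'(0)\neq \dot c_\g(T)$. By Lemma~\ref{crossing minimals}, $c'$ stays at uniform $d_h$-distance $\geq\e>0$ from $c_\g[T,\infty)$, so for every $s>0$ the point $c'(s)\notin c_\g(\R)$ and lies in a strip $S_{\g_s}$ with $\g_s\neq\g$; in particular $\del_s:=\g_s(\infty)\neq\del'$. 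Condition (2) of $\H_-(\del)$ together with the graph property produces a $\J(u)$-leaf $\ell_s$ passing through $c'(s)$ inside $S_{\g_s}$, and by the uniqueness $\J(u)\cap\M(\g)=\{c_\g\}$ we have $\ell_s\neq c_\g$. Since the closure of any strip $S_{\g'}$ with $\g'(\infty)\neq\del'$ meets $S^1$ only in $\{\del,\g'(\infty)\}$, the convergence $c'(s)\to\del'$ in $\bar X$ forces $\del_s\to\del'$, hence $\g_s\to\g$; a Morse-tube plus Arzel\`a-Ascoli argument, together with closedness of $\J(u)$, then yields subsequential limits $\ell_\infty\in\J(u)\cap\M(\g)=\{c_\g\}$, so that $\ell_s\to c_\g$ uniformly on compact subsets of $X$ (after parametrizing by the closest point to $\g(0)$).

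The main obstacle is turning this compact-set convergence $\ell_s\to c_\g$ into a contradiction with the fact that each $\ell_s$ passes through $c'(s)$ at $d_h$-distance $\geq\e$ from $c_\g$: since $c'(s)\to\del'$ leaves every compact set, the parameter $t_s$ with $\ell_s(t_s)=c'(s)$ must tend to $+\infty$, and one has to compare quantitatively the ``divergence rate'' of $\ell_s$ from $c_\g$ at its forward tail (controlled by $\del_s\to\del'$) against the rate at which $c'(s)\to\del'$ along $c'$. Making this quantitative comparison rigorous, using the Morse Lemma tubes around the $\g_s$ and the graph property of $\J(u)$ to prevent matching of the two rates, is the delicate step that produces the contradiction and thereby shows $c_\g$ is forward unstable.
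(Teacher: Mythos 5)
The countability half of your argument is correct and is essentially the paper's: the (interiors of the) strips between $c_{\g,u}^0$ and $c_{\g,u}^1$ are pairwise disjoint open sets by the graph property of $\J_-(u)$, and $\sigma$-compactness/second countability of $X$ allows only countably many of them to be non-empty. The paper phrases this via the connected components of $X-L$ where $L$ is the lamination by the leaves $c_{\g,u}^i$, but the content is the same.

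The instability half has a genuine gap, and you have flagged it yourself: you never derive the contradiction, deferring it to a ``quantitative comparison of divergence rates'' that you do not carry out. That route is also not the intended one, and I do not see how to make it work: the Morse Lemma gives no control on \emph{rates} of separation, only on bounded Hausdorff distance from hyperbolic background geodesics. The missing idea is purely topological and uses the points at infinity. Since $c_{\g,u}^0=c_{\g,u}^1=:c_\g$, the leaves $c_{\g',u}^i$ with $\g'\in\G_-(\del)$ close to $\g$ (on the side into which $c'$ escapes) accumulate on $c_\g$ uniformly on compact sets, yet each such leaf $\ell$ is a minimal geodesic with endpoints $\del=\g(-\infty)$ and $\g'(\infty)\neq\g(\infty)=\del'$. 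Hence $\ell\cup\{\del,\g'(\infty)\}$ separates $\bar X$ into two components, and $\del'$ lies in the component containing $c_\g$. Choosing $\ell$ so close to $c_\g$ that it separates $c'(s_0)$ (your point at $d_h$-distance $\geq\e$ from $c_\g$) from $c_\g$, the ray $c'$ must cross $\ell$ once to reach $c'(s_0)$ (it starts on $c_\g$, i.e.\ on the near side) and must cross back, because $c'(\infty)=\del'$ lies on the near side of $\ell$ in $\bar X$. A minimal ray intersecting a minimal geodesic transversely twice is impossible (Lemma \ref{crossing minimals} / the standard shortcut argument), which is the contradiction. No rate estimate is needed; only the fact that the approximating leaves have forward endpoints different from $\del'$.
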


\begin{proof}
Let
\begin{align*}
 L &:= \bigcup \{ c_{\g,u}^i(\R) : \g\in \G_-(\del), i=0,1\} , \\
 A &:= \{ C\subset X : C \text{ connected component of $X-L$}\}. 
\end{align*}
$L$ defines a closed lamination of $X$ by the graph property of $\J_-(u)$, hence all open sets $C\in A$ are pairwise disjoint. Moreover, for any $\g\in \G_-(\del)$ with $c_{\g,u}^0 \neq c_{\g,u}^1$, the strip between the $c_{\g,u}^i$ is an element of $A$. The first part of the lemma follows, as long as $A$ is countable. But as $X$ is the union of countably many compact sets $K_n$, and each $K_n$ can contain at most countably many disjoint open sets. The claim follows.

For the forward instability of $c:=c_{\g,u}^0=c_{\g,u}^1$, suppose $c'$ is a forward ray in $\RR_+(\g(\infty))$ initiating from $c$. The nearby $c_{\g',u}^i$ converge to $c$, as $\g'\in\G_-(\del)$ converges to $\g$, while they have different points at $+\infty$. Hence $c'$ would have to be intersected twice by a minimal geodesic, contradiction.
\end{proof}

\begin{proof}[Proof of Theorem \ref{thm countable unstable}]
Take a dense sequence $\del_n\in S^1$ and write $A_n\subset S^1$ for the set of points $\g(\infty)$ of $\g\in \G_-(\del_n)$ and such that $\M(\g)$ contains a forwards unstable minimal geodesic. Then $A_n$ has a countable complement in $S^1$ by Lemma \ref{countable 1}. Set
\[A := \bigcap_{n\in\N} A_n, \]
which still has a countable complement in $S^1$. If $\del\in A$, then for all $n$, there exists a forward unstable, minimal geodesic $\M(\g_n)$, where $\g_n$ is the hyperbolic geodesic from $\del_n$ to $\del$. If $\g\in\G_+(\del)$ is arbitrary, choose two subsequences $\del_m^\pm$ of $\del_n$, such that $\del_m^\pm\to \g(-\infty)$ with $\del_m^-<\g(-\infty)<\del_m^+$ in the counterclockwise orientation of $S^1$. Writing $c_m^\pm:\R\to X$ for the forward unstable geodesic found in $\M(\g_m^\pm)$, we find limits $c^\pm$ of $\{c_m^\pm\}$ in $\M(\g)$. Since the $c_m^\pm$ cannot intersect the bounding geodesics $c_\g^i$ by instability, we have $c^-=c_\g^1, c^+=c_\g^0$. Since $\A_+(\del)$ is closed, the theorem follows.
\end{proof}

\begin{proof}[Proof of Corollary \ref{cor thm countable unstable}]
Let $A^+\subset S^1$ be the set of $\del$ satisfying the conclusion of Theorem \ref{thm countable unstable}, then $A^+$ has full Lebesgue measure, since it has a countable complement. For $\del\in A^+$ and any $u\in \H_+(\del)$, both $c_\g^i=c_{\g,u}^i$ belong to $\J(u)$ by construction, so we can apply Lemma \ref{countable 1}, showing $c_\g^0=c_\g^1$ for all but countably many $\g\in \G_+(\del)$. Hence, if we write $A_\del^-$ for the set of points $\g(-\infty)$ of $\g\in\G_+(\del)$ with $\M(\g)$ consisting of only one minimal geodesic, each $A_\del^-$ with $\del\in A^+$ again has full Lebesgue measure in $S^1$. We obtain
\begin{align*}
&~ \vol(\{(\del_-,\del_+) \in S^1\times S^1 : \del_+\in A^+, \del_-\in A_{\del_+}^- \} ) \\
= &~ \int_{A^+} \left(\int_{A^-_{\del_+}} 1 d\del_- \right) d\del_+ = \int_{S^1} \left(\int_{S^1} 1 d\del_-  \right) d\del_+ \\
= &~ \vol(S^1\times S^1).
\end{align*}
\end{proof}

\section{The width of asympotic directions}\label{section width}

For the whole Section \ref{section width}, we assume that $F$ is invariant under the group $\Gamma$ of deck transformations with respect to the covering $X\to M$. Note that then $F$ can be considered a Finsler metric on $M$. We concentrate of the forward behavior of forward rays, while all results have analogons for backward rays.

As we remarked earlier, the Morse Lemma is the cornerstone of this work. It allows us to define the width $w(\del)$ of $\del\in S^1$, which is special for surfaces of genus $>1$. It does not work for the 2-torus, even though the Morse Lemma also holds in this case due to G. A. Hedlund \cite{hedlund}, but does work in closed manifolds of arbitrary dimension admitting a Riemannian metric $g_h$ of negative curvature. Hence the finiteness of the following objects reflects once more the hyperbolic background structure of $X$.

Recall the definition of the bounding geodesics $c_\g^i$ of $\M(\g)$ in Lemma \ref{bounding geodesics morse}.

\begin{defn}\label{def width}
For $\del\in S^1, \g\in\G$ set
\begin{align*}
i(\g) &:= \inf_{t\in\R} d_h(c_\g^0(\R),c_\g^1(t)), \\
w(\g) &:= \liminf_{t\to\infty} d_h(c_\g^0(\R),c_\g^1(t)), \\
w(\del) &:= \sup_{\g\in\G_+(\del)} w(\g), \\
\end{align*}
We call $w(\g),w(\del)$, the \emph{(forward) width} of $\g,\del$, respectively.
\end{defn}

\begin{remark}\label{width bounded}
Let
\[ D:= \sup\{ d_h(c_\g^0(\R),c_\g^1(t)) : \g\in\G, t\in \R \}, \]
which is finite by the Morse Lemma. Then
\[ i(\g)\leq w(\g)\leq w(\g(\infty)) \leq D \quad \forall \g\in\G. \]
\end{remark}

One can study uniqueness of weak KAM solutions using the notion of width as follows.

\begin{prop}\label{w=0 implies unique weak KAM}
If $w(\del)=0$ for $\del\in S^1$, then there is up to adding a constant only one forward weak KAM solution $u\in\H_+(\del)$, i.e. for any $u\in\H_+(\del)$ we have
\[ \H_+(\del) = \{u + c : c\in\R \}. \]
In particular, if $w(\del)=0$, then $\A_+(\del)=\RR_+(\del)$, i.e. all rays $c:[0,\infty)\to X$ with $c(\infty)=\del$ are forward unstable.
\end{prop}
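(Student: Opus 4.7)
My plan is to derive both statements of the proposition by first reducing to the second one ($\A_+(\del)=\RR_+(\del)$) and then proving that. For the reduction, note that by Proposition~\ref{unstable Aubry} one has $\A_+(\del)=\bigcap_{u\in\H_+(\del)}\J_+(u)$, and by Lemma~\ref{as dir well-def} each $\J_+(u)\subset\RR_+(\del)$. Hence whenever $\A_+(\del)=\RR_+(\del)$, this sandwich forces $\J_+(u)=\RR_+(\del)$ for every $u\in\H_+(\del)$. Then for any $u,u'\in\H_+(\del)$ we have $\J_+(u)=\J_+(u')$, and Corollary~\ref{J=J' then u=u'} gives $u-u'=\const$. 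So the proposition reduces to showing $\A_+(\del)=\RR_+(\del)$ under the hypothesis $w(\del)=0$.

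I would argue the reduced claim by contradiction. Suppose some $v\in\RR_+(\del)$ has $c_v$ not forward unstable. Unraveling the definition produces $t_0>0$ and a minimal ray $c'\colon[0,\infty)\to X$ with $c'(0)=c_v(t_0)$, $c'(\infty)=\del$, and $\dot c'(0)\neq\dot c_v(t_0)$; writing $p:=c_v(t_0)$, the curves $c_1:=c_v|_{[t_0,\infty)}$ and $c_2:=c'$ (reparametrized to start at~$0$) give two distinct forward rays from $p$ to $\del$. The plan is now to manufacture $\g\in\G_+(\del)$ with $w(\g)>0$, contradicting $w(\del)=0$. I would choose $\g$ whose backward endpoint $\g(-\infty)\in S^1-\{\del\}$ lies in the arc for which $\g$ passes into the ``bigon'' bounded by $c_1,c_2$, so that $c_1(t),c_2(t)$ lie on opposite sides of $\g$ for large $t$. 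Using the limit construction from the proof of Proposition~\ref{bounding geodesics for rays}, set $y_n:=\g(-n)$ and take minimal $F$-segments $\sigma_n^i$ from $y_n$ to $c_i(n)$; by the Morse Lemma these stay in a uniform $d_h$-tube around $\g$, so subsequences converge to bi-infinite minimal geodesics $\tilde c_1,\tilde c_2\in\M(\g)$. Since the forward endpoints $c_i(n)$ are on opposite sides of $\g$, one expects $\tilde c_1\neq\tilde c_2$, and hence the bounding geodesics of $\M(\g)$ satisfy $c_\g^0\neq c_\g^1$.

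The main obstacle is then to promote the distinctness $c_\g^0\neq c_\g^1$ to the positive asymptotic separation $\liminf_{t\to\infty}d_h(c_\g^0(\R),c_\g^1(t))>0$ that $w(\g)>0$ demands: a~priori two distinct non-intersecting minimal geodesics could still pinch together at infinity along a subsequence. To rule this out, I would exploit the $\Gamma$-invariance of $F$ and compactness of $M=X/\Gamma$: if the liminf vanished, then after translating by suitable $\tau_n\in\Gamma$ one could extract subsequential limits $\tau_n\dot c_\g^0(s_n)\to v_\infty$ and $\tau_n\dot c_\g^1(t_n)\to v_\infty$ to a common vector $v_\infty\in SX$. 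Combining the graph property of $\J(u)$ for any $u\in\H_+(\del)$ (Lemma~\ref{lemma fathi}) with the no-successive-crossings Lemma~\ref{crossing minimals} would then identify the two bounding geodesics asymptotically and, traced back through the construction, force the original rays $c_1,c_2$ to coincide, contradicting $\dot c'(0)\neq\dot c_v(t_0)$. Making this $\Gamma$-recurrence argument precise, and verifying that $\g$ can indeed be chosen so that the opposite-side configuration survives passage through the Morse tube, is the technical crux.
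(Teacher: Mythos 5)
Your reduction of the uniqueness statement to $\A_+(\del)=\RR_+(\del)$, via the sandwich $\A_+(\del)\subset\J_+(u)\subset\RR_+(\del)$ together with Corollary~\ref{J=J' then u=u'}, is correct, and you have rightly identified Lemma~\ref{crossing minimals} as the engine of the proof. The gap is in the central step: from two distinct rays $c_1,c_2$ emanating from $p=c_v(t_0)$ you try to manufacture some $\g\in\G_+(\del)$ with $w(\g)>0$ by taking limits of minimal segments from $\g(-n)$ to $c_i(n)$. This does not deliver what you need. The two limit geodesics $\tilde c_1,\tilde c_2$ may coincide (the segments share the initial point $\g(-n)$ and can separate only near their far ends), so even $c_\g^0\neq c_\g^1$ is not established; the existence of a hyperbolic geodesic whose forward ray stays inside the bigon bounded by $c_1,c_2$ is itself unproven; and the patch you propose for the remaining pinching problem --- translating by $\tau_n\in\Gamma$ to a common limit vector $v_\infty$ --- is not an argument. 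It is also a detour in the wrong direction: the proposition holds without the $\Gamma$-action (it is used in Section 2/3 of the paper before $\Gamma$-invariance is assumed), so compactness of $M$ cannot be the missing ingredient.

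What is actually missing is the \emph{trapping} of rays, which converts your contrapositive into a direct argument. One first shows that every bounding geodesic $c_\g^i$, $\g\in\G_+(\del)$, is forward unstable: a ray $c$ starting on $c_\g^0$ with $c(\infty)=\del$ stays in the closed strip $S$ between $c_\g^0,c_\g^1$ by Lemma~\ref{partial instability bounding}; since $w(\g)\leq w(\del)=0$ there are times $t_n\to\infty$ with $d_h(c_\g^0(\R),c_\g^1(t_n))\to 0$, so the trapped ray $c$ comes arbitrarily close to $c_\g^0$ at infinity, and Lemma~\ref{crossing minimals} then forces $c$ to be a subray of $c_\g^0$. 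With this instability in hand, Proposition~\ref{bounding geodesics for rays} shows that your point $p$ lies in the strip between $c_\g^0,c_\g^1$ for a (unique) $\g\in\G_+(\del)$; Lemma~\ref{partial instability bounding} traps both $c_v|_{[t_0,\infty)}$ and $c'$ in that strip, the same pinching forces them arbitrarily close to each other at infinity, and Lemma~\ref{crossing minimals} (applicable since $c'$ starts at the interior point $c_v(t_0)$, taking $v_n=v$ constant) yields the contradiction. So the correct mechanism is: choose $\g$ by the strip containing $p$, trap, pinch, and apply the crossing lemma --- no separating geodesic, no limit construction, and no recurrence argument are needed.
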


\begin{proof}
First observe, that all $c_\g^i$ with $\g\in\G_+(\del)$ are positively unstable. Namely, if $c:[0,\infty)\to X$ is a ray initiating e.g. from $c_\g^0$, then $c(0,\infty)$ has to lie inside the strip between $c_\g^0,c_\g^1$ by Lemma \ref{partial instability bounding}; now $w(\g)=0$ and Lemma \ref{crossing minimals} show that $c$ is a subray of $c_\g^0$. This shows that the bounding geodesic $c_{\g,u}^i$ in $\J(u)$ for $u\in \H_+(\del)$ (cf. Proposition \ref{bounding geodesics for rays}) coincide with $c_\g^i$.

Let now $u,u'\in\H_+(\del)$, $v\in \J_+(u)$, $t>0$ and $v'\in \J_+(u')$ with $\pi v'=c_v(t)$. By the instability of the $c_\g^i$, the rays $c_v$ and $c_{v'}$ lie in a common gap of $\A_+(\del)$, and hence between a pair $c_\g^0,c_\g^1$ for some $\g\in\G_+(\del)$. Again $w(\g)=0$ and Lemma \ref{crossing minimals} show that $c_{v'}$ is a subray of $c_v$. By closedness of $\J_+(u')$ we find $v\in \J_+(u')$, showing $\J_+(u)=\J_+(u')$ by symmetry. Corollary \ref{J=J' then u=u'} shows that $u-u'$ is constant.
\end{proof}

The widths defined above satisfy a useful upper semi-continuity property, from which we can deduce Theorem \ref{dense width 0} from the introduction. For this, we use the group action by $\Gamma$.

\begin{defn}\label{def positive gamma}
We say that a sequence $\g_n\in\G$ converges to $\g\in\G$, if for the pairs of endpoints $(\g_n(-\infty),\g_n(\infty))\to (\g(-\infty),\g(\infty))$ with respect to the euclidean metric in $\C\supset S^1$. 

A sequence $\tau_n\in\Gamma$ is \emph{positive for $\g\in\G$}, if there exists a sequence $x_n\in \g$ with $x_n\to \g(\infty)\in S^1$ (with respect to the euclidean distance in $\C\supset X$) and a compact set $K\subset X$, such that $\tau_nx_n\in K$ for all $n$.
\end{defn}

Intuitively, a sequence $\tau_n$ is positive for $\g\in \G$, if $\tau_n\g$ describes the behavior of $\g(t)$ in the compact quotient $M$, as $t\to\infty$.

\begin{lemma}\label{width semi-cont}
Let $\g,\g'\in\G$ and $\tau_n\in \Gamma$ be a positive sequence for $\g$, such that $\tau_n \g \to \g'$. Then $w(\g(\infty)) \leq i(\g')$.
\end{lemma}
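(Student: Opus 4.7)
I would prove $w(\g_0) \le i(\g')$ for every $\g_0 \in \G_+(\g(\infty))$ and then take the supremum. First I would show that the positive sequence $\tau_n$ extends to $\g_0$: the hypothesis $\tau_n x_n \in K$ with $x_n \to \g(\infty)$ forces the repelling fixed points $q_n \in S^1$ of the M\"obius transformations $\tau_n$ to satisfy $q_n \to \g(\infty)$, yielding a sequential north-south dynamics $\tau_n(z) \to \g'(-\infty)$ for every $z \in \bar X \setminus \{\g(\infty)\}$ (with $\tau_n(\g(\infty)) \to \g'(\infty)$ the only exception, as given). Since $\g_0(\infty) = \g(\infty)$ and $\g_0(-\infty) \ne \g(\infty)$, this gives $\tau_n \g_0 \to \g'$; moreover, asymptoticity of $\g_0$ to $\g$ at $+\infty$ provides $y_n \in \g_0$ with $d_h(y_n, x_n) \to 0$, so $\tau_n y_n$ stays in a bounded neighborhood of $\bar x$ and $\tau_n$ is positive for $\g_0$ as well. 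After renaming, it thus suffices to prove $w(\g) \le i(\g')$.

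Next I would establish the geometric fact: for any $\bar c^0, \bar c^1 \in \M(\g')$, $d_h(\bar c^0(\R), \bar c^1(\R)) \le i(\g')$. Given $\e > 0$, pick $p^* \in c_{\g'}^0(\R)$ and $q^* \in c_{\g'}^1(\R)$ with $d_h(p^*, q^*) \le i(\g') + \e$, and let $\sigma$ be a $d_h$-minimizing segment from $p^*$ to $q^*$. Each $\bar c^i$ is a bi-infinite curve from $\g'(-\infty)$ to $\g'(\infty)$ lying inside the strip bounded by $c_{\g'}^0$ and $c_{\g'}^1$ (Lemma \ref{bounding geodesics morse}) and hence topologically separates that strip into two components; since $\sigma$ joins the two opposite boundary curves of the strip, it must cross both $\bar c^0$ and $\bar c^1$, giving $\tilde p \in \sigma \cap \bar c^0$, $\tilde q \in \sigma \cap \bar c^1$ with $d_h(\tilde p, \tilde q) \le d_h(p^*, q^*) \le i(\g') + \e$, and the claim follows by letting $\e \to 0$.

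For the main estimate I would use $\Gamma$-equivariance $\tau_n c_\g^i = c_{\tau_n \g}^i$ and the Morse Lemma to confine these curves to a $D$-tube around $\tau_n \g \to \g'$, then extract via Arzela--Ascoli locally uniform limits $\bar c^i \in \M(\g')$ of reparameterized curves $\tilde c_n^i(\cdot) = c_{\tau_n \g}^i(\cdot + T_n^{*,i})$, with shifts $T_n^{*,i} \to \infty$ chosen so that $\tilde c_n^i(0)$ lies in a fixed compact reference set (the limits have the correct asymptotic endpoints by Lemma \ref{as dir cont rays}). Using the geometric fact, pick $\tilde q \in \bar c^1(\R)$ with $d_h(\tilde q, \bar c^0(\R)) \le i(\g') + \e$; by local Hausdorff convergence of $c_{\tau_n \g}^1$ to $\bar c^1$ there is $T_n = T_n^{*,1} + s$ (with $s$ bounded, hence $T_n \to \infty$) such that $\tau_n c_\g^1(T_n) \to \tilde q$, and then
\[
 d_h(c_\g^0(\R), c_\g^1(T_n)) = d_h(c_{\tau_n \g}^0(\R), \tau_n c_\g^1(T_n)) \longrightarrow d_h(\bar c^0(\R), \tilde q) \le i(\g') + \e.
\]
Thus $w(\g) \le i(\g') + \e$, and letting $\e \to 0$ then taking the supremum over $\g_0$ gives $w(\g(\infty)) \le i(\g')$. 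The main obstacle is organizing the Arzela--Ascoli compactness (choosing the shifts $T_n^{*,i} \to \infty$ and verifying that the limiting curves genuinely lie in $\M(\g')$) and rigorously carrying out the topological separation step so that the minimizing segment $\sigma$ truly crosses $\bar c^0$ and $\bar c^1$.
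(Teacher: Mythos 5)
Your proof is correct and follows essentially the same route as the paper's: reduce to a single $\g$ via positivity and forward asymptoticity, pass to limits $\bar c^i$ of $\tau_n c_\g^i$ in $\M(\g')$, bound $\inf_t d_h(\bar c^0(\R),\bar c^1(t))$ by $i(\g')$ through the separation/crossing argument inside the strip of $\M(\g')$, and use positivity to pull this back to the $\liminf$ as $t\to\infty$. You merely spell out details the paper leaves implicit (the north--south dynamics for the reduction, Arzela--Ascoli in place of the citation of Morse's Theorem~7, and the explicit choice of shifts $T_n\to\infty$).
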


\begin{proof}
Let $\g(\infty)=\del$. As $\tau_n$ is positive for $\g$, any other $\g''\in\G_+(\del)$ will converge under the $\tau_n$ to the same $\g'$ ($\g,\g''$ are asymptotic with respect to $d_h$, which is invariant under the isometries $\tau_n$). Hence it is enough to show $w(\g)\leq i(\g')$. Under $\tau_n$, the minimal geodesics $c_\g^0,c_\g^1$ have as a limit two geodesics $c^0,c^1$ in $\M(\g')$ (Theorem 7 in \cite{morse}), which have
\[ \inf_{t\in \R} d(c^0(\R),c^1(t)) \leq \inf_{t\in \R} d(c_{\g'}^0(\R),c_{\g'}^1(t)) = i(\g'). \]
Since $\tau_n$ is positive, the claim follows.
\end{proof}

\begin{proof}[Proof of Theorem \ref{dense width 0}]
The set $A$ of $\del\in S^1$, such that the geodesics $\g\in\G_+(\del)$ have a dense forward orbit in the hyperbolic unit tangent bundle of $M$, has full Lebesgue measure (the geodesic flow is ergodic with respect to the Lebesgue measure in the hyperbolic unit tangent bundle; Theorem 1.7 in \cite{walters} shows that almost all forward orbits are dense). For $\del\in A$, take any $\g\in \G_+(\del)$ and a sequence $\tau_n\in\Gamma$ positive for $\g$, such that $\tau_n\g\to \g'$, where $\g'$ is such that $\M(\g')$ contains only one minimal geodesic. Such $\g'$ exist by Corollary \ref{cor thm countable unstable}. But then $i(\g')=0$ and Lemma \ref{width semi-cont} proves that $w(\del)=0$ for all $\del\in A$.
\end{proof}

Intuitively the above proof shows that, if $\del\in S^1$ is the point at infinity of a forward dense hyperbolic geodesic, then the complicated behavior of the geodesics in $\G_+(\del)$ forces all rays in $\RR_+(\del)$ to come close to each other at infinity. Otherwise, the hyperbolic topology of $M$ would ``peel'' the geodesics away from each other, as the geodesics move around the surface.

Observe that the techniques in the proof of Theorem \ref{dense width 0} work in any dimension, as long as there is one $\g\in\G$ with $i(\g)=0$ (cf. also Remark \ref{bem high dim} in the intoduction). The existence of $\g$ with $i(\g)=0$ in general hyperbolic manifolds is, however, not clear to us.

\abs

We have two more techniques to derive $w(\g)=0$, that we will use below.

\begin{lemma}\label{one-sided approx}
Suppose $\g,\g'\in \G$ and $\tau_n\in\Gamma$ is positive for $\g$ with $\tau_n \g\to \g'$, such that $\tau_n \g\cap \g'=\emptyset$ in $X$. Then $w(\g)=0$.
\end{lemma}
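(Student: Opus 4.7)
The plan is to use positivity of $\tau_n$ and the $\Gamma$-invariance of $d_h$ to transport the question $w(\g)=0$ to the asymptotic behavior of the translated bounding geodesics near $\g'$. After passing to a subsequence, I may assume that $\tau_n\g$ lies entirely on one side of $\g'$, say the left, for every $n$. By positivity I pick $x_n\in\g$ with $x_n\to\g(\infty)$ and $\tau_nx_n$ in a fixed compact $K\subset X$; the Morse Lemma then yields $y_n=c_\g^1(s_n)$ with $s_n\to\infty$ and $d_h(x_n,y_n)\leq D$, so $\tau_ny_n$ remains in an enlarged compact set. Following the extraction argument in Lemma \ref{width semi-cont}, after a further subsequence I obtain $\tau_nc_\g^0\to c^0$ and $\tau_nc_\g^1\to c^1$ locally uniformly in $X$, with $c^0,c^1\in\M(\g')$, $c^1$ left of $c^0$, and $\tau_ny_n\to y^*\in c^1(\R)$.

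Using $\Gamma$-invariance of $d_h$ I will then deduce
\[ w(\g)\;\leq\;\liminf_{n\to\infty}d_h(c_\g^0(\R),c_\g^1(s_n))\;=\;\liminf_{n\to\infty}d_h(\tau_nc_\g^0(\R),\tau_ny_n)\;=\;d_h(c^0(\R),y^*). \]
Since $y^*\in c^1(\R)$, Lemma \ref{crossing minimals} (and its backward analogue) forbids a transverse intersection of the two asymptotic minimal geodesics $c^0,c^1\in\M(\g')$, so $y^*\in c^0(\R)$ is equivalent to $c^0=c^1$. Once this equality is in hand, the displayed inequality yields $w(\g)=0$.

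The hard part is to derive $c^0=c^1$ from the one-sided hypothesis $\tau_n\g\cap\g'=\emptyset$. I plan to argue by contradiction: assuming $c^0\neq c^1$, they are disjoint and bound a non-degenerate strip in $X$. On one hand, the endpoints of $\tau_nc_\g^i$ coincide with those of $\tau_n\g$ and therefore lie on the left arc of $S^1\setminus\{\g'(\pm\infty)\}$; since minimal geodesics of $\M(\tau_n\g)$ remain in a Morse tube of width $D$ around $\tau_n\g$ while $\tau_n\g\to\g'$, any excursion of $\tau_nc_\g^i$ past $\g'$ is confined to a uniformly bounded region and disappears under the limit, forcing both $c^0$ and $c^1$ onto the closed left half of $\g'$. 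On the other hand, asymptotically near each endpoint on $S^1$ the hyperbolic geodesic $\tau_n\g$ is squeezed between $\tau_nc_\g^0$ and $\tau_nc_\g^1$, so $\g'$ sits in the closed limiting strip bounded by $c^0$ and $c^1$. These two conclusions are incompatible when $c^0\neq c^1$, since a strip bounded by two curves lying on the closed left of $\g'$ cannot contain $\g'$ in its interior; this contradiction gives $c^0=c^1$ and completes the plan.
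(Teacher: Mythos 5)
Your overall architecture (translate by $\tau_n$, extract limits $c^i=\lim\tau_n c_\g^i$ in $\M(\g')$, and use positivity to convert $c^0=c^1$ into $w(\g)=0$) matches the paper's, and the displayed estimate $w(\g)\leq d_h(c^0(\R),y^*)$ is sound. The gap is in your derivation of $c^0=c^1$: both halves of the contradiction argument are unjustified. First, the claim that excursions of $\tau_n c_\g^i$ past $\g'$ ``disappear under the limit'' is false --- the Morse Lemma only confines $\tau_n c_\g^i$ to a tube of width $D$ around $\tau_n\g$, and a sequence of curves each making an excursion of size up to $D$ beyond $\g'$ can perfectly well converge to a limit that still crosses $\g'$. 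Second, the background hyperbolic geodesic $\tau_n\g$ is in general \emph{not} squeezed between $\tau_n c_\g^0$ and $\tau_n c_\g^1$ (the Morse Lemma gives distance bounds, not an ordering), so you cannot conclude that $\g'$ lies in the limiting strip between $c^0$ and $c^1$; indeed, if both of your claims held they would force one of the $F$-geodesics $c^i$ to coincide with the hyperbolic geodesic $\g'$, which already signals the argument is off. The mechanism the paper uses is to compare with a \emph{bounding minimal geodesic} $c_{\g'}^j$ of $\M(\g')$ rather than with $\g'$ itself: when both endpoints of $\tau_n\g$ lie strictly on one side of $\{\g'(\pm\infty)\}$ in $S^1$, any geodesic of $\M(\tau_n\g)$ meeting $c_{\g'}^j$ would have to meet it twice, contradicting Morse's theorem that minimal geodesics of different types intersect at most once; hence $\tau_n c_\g^0,\tau_n c_\g^1$ stay weakly on one side of $c_{\g'}^j$ while their limits must lie in the strip of $\M(\g')$ on the other side, forcing both limits to equal $c_{\g'}^j$.

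You also silently assume that $\tau_n\g$ and $\g'$ share no endpoint at infinity (your assertion that the endpoints of $\tau_nc_\g^i$ lie in the open left arc of $S^1\setminus\{\g'(\pm\infty)\}$ requires this). Disjointness in $X$ does not rule out a common endpoint on $S^1$, and in that case the double-intersection argument above breaks down. The paper handles these cases separately: if $\tau_n\g(\infty)=\g'(\infty)$ for infinitely many $n$, then $\tau_n^{-1}\tau_m$ is a nontrivial element fixing $\g(\infty)$ whose axis is not $\g$, and Theorem \ref{morse periodic} yields that $c_\g^0,c_\g^1$ are asymptotic, so $w(\g)=0$; if $\tau_n\g(-\infty)=\g'(-\infty)$ for infinitely many $n$, all $\tau_n$ are powers of a single element fixing $\g(-\infty)$, which contradicts the positivity of $\tau_n$ for $\g$. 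Your proof needs both of these cases added.
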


\begin{proof}
First assume that $\tau_n\g$ and $\g'$ have no points at infinity in common for infinitely many $n$. The bounding geodesics $c_\g^i$ converge (after taking a subsequence if necessary) under $\tau_n$ to one bounding geodesic, $c_{\g'}^0$, say (no geodesic in $\M(\tau_n\g)$ can intersect $c_{\g'}^0$, cf. Theorem 4 in \cite{morse}). This shows that for large $n$, the two geodesics $\tau_n c_\g^i$ are close. Using that $\tau_n$ is positive for $\g$, the claim follows.

Now let $\tau_n\g(\infty)=\g'(\infty)$ for infinitely many $n$, w.l.o.g. for all $n$. Then we find $n,m$, such that $\tau:=\tau_n^{-1}\tau_m$ fixes $\g(\infty)$. Since a periodic hyperbolic geodesic cannot approach any other hyperbolic geodesic, $\g$ is not the axis of $\tau$. Theorem \ref{morse periodic} below shows that $c_\g^0,c_\g^1$ are asymptotic.

Finally, suppose $\tau_n\g(-\infty)=\g'(-\infty)$ for infinitely many $n$, w.l.o.g. for all $n$. Replace $\g$ by $\tau_1\g$, such that we can assume $\del:=\g(-\infty)=\g'(-\infty)$. Now it follows that all $\tau_n$ fix $\del$ and there exists $\tau\in\Gamma-\{\id\}$, such that $\tau_n=\tau^{k_n}$ for a sequence $k_n\in \Z$. But for such $\tau_n$, there cannot be a sequence $x_n\in \g, x_n\to \g(\infty)$, which is moved under $\tau_n$ into a given compact set, contradicting the assumption that $\tau_n$ is positive for $\g$.
\end{proof}

\begin{lemma}\label{periodic approx}
Let $\g,\g'\in\G$, $\tau_n\in\Gamma$ positive for $\g$ and $\g'$ be the axis of some $\tau\in\Gamma-\{\id\}$, such that $\tau_n\g\to \g'$ and $\tau_n\g\neq \g'$ for all $n$. Then $w(\g)=0$.
\end{lemma}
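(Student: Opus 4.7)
The plan is to mirror the case analysis of Lemma \ref{one-sided approx} and invoke the axis hypothesis on $\g'$ to dispose of the configurations that Lemma \ref{one-sided approx} explicitly excludes. Two principal cases arise, depending on whether $\tau_n\g$ shares an endpoint at infinity with $\g'$.

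First I would rule out the common-endpoint configurations using positivity. Suppose $\tau_n\g(\infty)=\g'(\infty)$ along a subsequence. Since $\g'(\infty)$ is a fixed point of $\tau$, the stabilizer $\Stab_\Gamma(\g'(\infty))$ is a cyclic group $\la\tau_0\ra$ generated by a primitive hyperbolic element whose axis is $\g'$ (with $\tau$ a power of $\tau_0$). Writing $\tau_n=\tau_0^{k_n}\tau_{n_0}$, the hypothesis $\tau_n\g\to\g'$ forces $k_n\to-\infty$ (this is the only way to drive $\tau_{n_0}\g(-\infty)$ onto $\g'(-\infty)$ via the repelling action of $\tau_0^{-1}$ there). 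But then $\tau_{n_0}x_n\to \tau_{n_0}\g(\infty)=\g'(\infty)$, and $\tau_0^{k_n}$ with $k_n\to-\infty$ pushes neighborhoods of $\g'(\infty)$ out of every compact set, so $\tau_n x_n=\tau_0^{k_n}\tau_{n_0}x_n\notin K$ for large $n$, contradicting positivity. The case $\tau_n\g(-\infty)=\g'(-\infty)$ along a subsequence is handled analogously and produces the absurd equation $\tau_{n_0}\g(\infty)=\g'(-\infty)=\tau_{n_0}\g(-\infty)$. Discarding these finitely many indices, the endpoint pairs are pairwise disjoint on $S^1$.

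After this reduction, pass to a subsequence so that the relative configuration is uniform: either all $\tau_n\g$ are \emph{unlinked} with $\g'$ on $S^1$, in which case $\tau_n\g\cap\g'=\emptyset$ in $X$ and Lemma \ref{one-sided approx} applies verbatim to yield $w(\g)=0$; or all $\tau_n\g$ are \emph{linked}, i.e.\ cross $\g'$ transversely at a single point $p_n\in X$. The linked case is the main obstacle. There, the bounding geodesics $c_{\tau_n\g}^0,c_{\tau_n\g}^1$ sit on opposite sides of $\g'$ in a compact region and may limit onto the two distinct bounding geodesics $c_{\g'}^0\neq c_{\g'}^1$ of $\M(\g')$, so a direct application of Lemma \ref{width semi-cont} produces only the weak estimate $w(\g)\le i(\g')$, which can be positive when $\M(\g')$ has a nontrivial flat strip.

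To close the gap I would use the $\tau$-invariance of $\g'$ to dichotomize on the behavior of $p_n$. If $p_n$ escapes along $\g'$ to $\g'(\pm\infty)$, then in any compact set $K'\supset K$ the geodesics $\tau_n\g$ are, for large $n$, on one consistent side of $\g'$; locally this is the unlinked situation and the argument of Lemma \ref{one-sided approx} (Case 1) runs through, producing $w(\g)=0$ via the positivity transfer. If instead $p_n$ stays in a bounded region, then by passing to a subsequence $p_n\to p_\infty\in\g'$, and the limiting bounding geodesics $c^0,c^1\in\M(\g')$ both pass within the Morse constant $D$ of $p_\infty$; by the graph property of Lemma \ref{lemma fathi} applied to any weak KAM solution $u\in\H_+(\g'(\infty))$, the two $u$-calibrated rays through $p_\infty$ coincide, forcing the two limit strips to collapse and yielding $w(\g)=0$ by the positivity argument. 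The hard part is the careful verification of this dichotomy: one must control where the crossings $p_n$ can accumulate (using the hyperbolic dynamics of $\tau$ on $\bar X$) and execute the graph-property collapse using that $\g'$ is $\tau$-invariant, which is precisely the point where the axis hypothesis is essential.
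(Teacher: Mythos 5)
There is a genuine gap in the linked case, which is exactly where the lemma's content lies. Your case (b) rests on a ``graph-property collapse'': you assert that the two limit geodesics $c^0=\lim\tau_nc_\g^0$ and $c^1=\lim\tau_nc_\g^1$ must coincide because they both pass near $p_\infty$ and are calibrated. But the graph property of Lemma \ref{lemma fathi} only says that for a \emph{fixed} $u$ there is at most one calibrated vector over a given point of differentiability; $c^0$ and $c^1$ do not pass through a common point, need not calibrate the same weak KAM solution, and being within the Morse constant of each other does not make them equal. If your collapse argument worked it would prove $i(\g')=0$, which is false in general: for a periodic $\g'$ the strip between $c_{\g'}^0$ and $c_{\g'}^1$ may have positive width, and the whole point of the lemma is to get $w(\g)=0$ \emph{despite} $i(\g')>0$ (note Lemma \ref{width semi-cont} only gives $w(\g)\le i(\g')$, as you observe). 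Your case (a) is also not closed: when the crossing points escape to $\g'(\pm\infty)$ the background geodesics $\tau_n\g$ and $\g'$ are still linked on $S^1$, so minimal geodesics in $\M(\tau_n\g)$ \emph{must} cross those in $\M(\g')$, and the disjointness that drives the proof of Lemma \ref{one-sided approx} is simply unavailable; ``locally unlinked'' does not rescue it.

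The missing idea is the one the paper uses: since $\tau_n\g$ and $\g'$ are linked, $c_n:=\tau_nc_\g^0$ must cross the \emph{leftmost} periodic bounding geodesic $c_{\g'}^1$ at some $x_n$. Translate by powers $\tau^{k_n}$ of the axis element so that $\tau^{k_n}x_n$ stays in a compact set (here the axis hypothesis enters: $c_{\g'}^1$ is $\tau$-invariant by Theorem \ref{morse periodic}(1), so the normalized intersection points still lie on $c_{\g'}^1$). The backward rays of $\tau^{k_n}c_n$ then converge to a backward ray issuing from a point of $c_{\g'}^1$ with endpoint $\g'(-\infty)$, and Theorem \ref{morse periodic}(3) forces this limit to be a subray of $c_{\g'}^1$ itself. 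Hence $\tau^{k_n}\tau_nc_\g^0\to c_{\g'}^1$; since $\tau^{k_n}\tau_nc_\g^1$ limits to a geodesic of $\M(\g')$ lying weakly left of this and $c_{\g'}^1$ is leftmost, both bounding geodesics of $\g$ converge under the (still positive) sequence $\tau^{k_n}\tau_n$ to the \emph{same} curve, giving $w(\g)=0$. Your proposal never invokes Theorem \ref{morse periodic}(3), which is the step that actually uses that $\g'$ is an axis, so the argument does not go through as written. (Your preliminary elimination of shared endpoints is unnecessary: distinct hyperbolic geodesics sharing an endpoint at infinity are disjoint in $X$, so Lemma \ref{one-sided approx} already covers those configurations.)
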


\begin{proof}
By Lemma \ref{one-sided approx}, we can assume that $\tau_n\g\cap \g'\neq\emptyset$ for all $n$. Assume that $\tau_n\g(\pm\infty)>\g'(\pm\infty)$ in the counterclockwise orientation of $S^1$, the other case being analogous. Then $c_n:=\tau_nc_\g^0$ and $c_{\g'}^1$ have to intersect in some point $x_n=c_n(s_n) = c_{\g'}^1(t_n)$. Choose $k_n\in\Z$, such that $\tau^{k_n}x_n$ lies in a compact set and assume w.l.o.g. that $x_n\to x\in c_\g^1(\R)$. The backward rays $\tau^{k_n} c_n(-\infty,s_n]$ converge w.l.o.g. to a backward ray $c:(-\infty,0]\to X$ with $c(0)=x$ and $c(-\infty)=\g'(-\infty)$ by $\tau_n\g\to \g'$. By Theorem \ref{morse periodic} (3) below, $c$ is a subray of $c_{\g'}^1$ and hence $\tau^{k_n}c_n$ converges to $c_{\g'}^1$. Note that the new sequence $\tilde\tau_n:=\tau^{k_n}\circ\tau_n$ is again positive for $\g$ (the points $x_n$ converge to $\g'(\infty)$) and under $\tilde\tau_n$, the geodesic $c_\g^1$ converges to a minimal geodesic $c^1$ lying at finite distance left of the limit $c_{\g'}^1$ of the $c_n$. But by definition of $c_\g^1$ being the leftmost minimal geodesic in $\M(\g')$, we also obtain $c^1=c_{\g'}^1$. The claim follows.
\end{proof}

\begin{remark}\label{bem periodic approx}
Assume in Lemma \ref{periodic approx}, that additionally $\tau_n\g$ and $\g'$ do not have the same point at $+\infty$ for all but finitely many $n$. Then, as all geodesics in $\G_+(\g(\infty))$ converge under $\tau_n$ to the same periodic geodesic $\g'$, the proof of Lemma \ref{periodic approx}, combined with Lemma \ref{one-sided approx}, actually shows $w(\g(\infty))=0$.

If on the other hand $\tau_n\g$ and $\g'$ do have the same point at $+\infty$, then $\g(\infty)$ is a fixed point of $\Gamma$. We will discuss this situation in the next subsection.
\end{remark}

\subsection{Periodic directions}\label{section periodic}

The following theorem due to Morse completely clarifies the structure of $\RR^\pm(\del)$, if $\del\in S^1$ is fixed by some non-trivial $\tau\in\Gamma$, cf. Figure \ref{fig periodic}. Recall the definition of the bounding geodesics $c_\g^i$ of $\M(\g)$ in Lemma \ref{bounding geodesics morse}.

\begin{thm}[Morse \cite{morse}]\label{morse periodic}
 Let $\tau\in\Gamma-\{\id\}$ be a prime element with hyperbolic axis $\g\in \G$. Then
\begin{enumerate}
 \item the set $\M_{per}(\g)$ of initial vectors to minimal geodesics in $\M(\g)$ that are invariant under $\tau$ contains the bounding geodesics $c_\g^0,c_\g^1$ and any minimal geodesic in $\M(\g)$ invariant under some $\tau^n$ with $n \in\Z-\{0\}$ is also invariant under $\tau$ itself, hence belongs to $\M_{per}(\g)$,

 \item any ray in $\RR_\pm(\g(\pm\infty))-\M_{per}(\g)$ is asymptotic to a minimal geodesic in $\M_{per}(\g)$; moreover, no geodesic in $\M(\g)$ can be asymptotic in both its senses to a single periodic minimal geodesic in $\M_{per}(\g)$,

 \item no ray $c_v$ with $v\in\RR_\pm(\g(\pm\infty))-\M_{per}(\g)$ has $\pi v\in \pi \M_{per}(\g)$,

 \item between any pair of neighboring minimal geodesics $c_0,c_1$ in the closed set $\pi \M_{per}(\g)\subset X$ ($c_0$ lying right of $c_1$ with respect to the orientation of $\g$), there are minimal geodesics $c_\pm$ in $\M(\g)$, such that $c_+(t)$ is asymptotic to $c_0$ for $t\to -\infty$ and asymptotic to $c_1$ for $t\to\infty$; $c_-$ has the opposite behavior. Writing $\M_-(\g) , \M_+(\g)$ for the sets of all initial conditions of such ``heteroclinics'', we have
\[ \M(\g) = \M_{per}(\g) \cup \M_-(\g) \cup \M_+(\g) \]
and the two sets $\pi(\M_{per}(\g) \cup \M_\pm(\g))$ are laminations of $X$.
\end{enumerate}
\end{thm}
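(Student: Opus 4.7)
The plan is to prove the four assertions in turn, leaning on three recurrent ingredients: (i) the non-crossing property of minimal rays from Lemma \ref{crossing minimals}, which endows $\M(\g)$ with a natural total left--right order within the strip of Lemma \ref{bounding geodesics morse}; (ii) that $\tau\g=\g$ makes conjugation by $\tau$ an order-preserving bijection of $\M(\g)$; (iii) the Morse Lemma (Theorem \ref{morse lemma}), confining all relevant rays to a $D$-tube around $\g$ whose quotient by $\langle\tau\rangle$ is a compact annulus.

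For Part (1), $c_\g^1$ and $c_\g^0$ are uniquely characterized as the left- and rightmost elements of $\M(\g)$, so the order-preserving bijection $\tau$ must fix them. If some $c\in\M(\g)$ satisfies $\tau^n c=c$ with $n\neq 0$ but $\tau c\neq c$, then $\tau c$ lies strictly on one side of $c$, so $\{\tau^k c\}_{k\in\Z}$ is strictly monotone, contradicting $\tau^n c=c$. Part (3) follows immediately from Lemma \ref{crossing minimals}: if $v\in\RR_+(\g(\infty))-\M_{per}(\g)$ had $\pi v$ lying on some $c^*\in\M_{per}(\g)$, then $c_v$ and the forward subray of $c^*$ from $\pi v$ would share a basepoint and an endpoint at infinity, hence coincide, forcing $v\in\M_{per}(\g)$.

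For Part (2), given $v\in\RR_+(\g(\infty))-\M_{per}(\g)$, the translates $\tau^n c_v$ are pairwise non-crossing forward rays to $\g(\infty)$; being pairwise distinct (as $v\notin\M_{per}(\g)$), they are WLOG strictly monotone to the right in $n$. In the $\tau$-invariant Morse tube, this monotone family subconverges to a $\tau$-invariant limit $c_\infty\in\M_{per}(\g)$. Passing to the compact annulus $X/\langle\tau\rangle$, the $\omega$-limit set of the projected ray $\bar c_v$ must, by the monotone squeezing of its lifts, be exactly the single closed geodesic $\bar c_\infty$, giving forward-asymptoticity of $c_v$ to $c_\infty$. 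For the ``no double asymptote'' subclaim, if $c\in\M(\g)-\M_{per}(\g)$ were forward and backward asymptotic to the same $c_0\in\M_{per}(\g)$, Part (3) would place $c$ strictly on one side of $c_0$, and a shortcut along $c_0$ across the resulting lens joined to $c$ by short near-perpendiculars would yield a path strictly shorter than a long segment of $c$ (exploiting strict convexity of $F$), contradicting minimality.

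For Part (4), order $\M_{per}(\g)$ left-to-right. Given neighbors $c_0$ (right) and $c_1$ (left) bounding an open strip $S$ containing no other element of $\M_{per}(\g)$, minimal segments from $c_0(-n)$ to $c_1(n)$ subconverge by the Morse Lemma to a minimal geodesic $c_+\in\M(\g)$ lying in the closed strip $\overline{S}$; Parts (2)--(3) together with the absence of periodic leaves strictly inside $S$ then force $c_+$ to be asymptotic to $c_0$ at $-\infty$ and to $c_1$ at $+\infty$. The symmetric construction (starting near $c_1$ at $-\infty$ and $c_0$ at $+\infty$) yields $c_-$. For the decomposition, each $c\in\M(\g)-\M_{per}(\g)$ has by Part (2) asymptotes $c_{-\infty},c_{+\infty}\in\M_{per}(\g)$, and Part (3) prevents $c$ from crossing any periodic leaf, so $c_{-\infty},c_{+\infty}$ must be neighbors and $c\in\M_\pm(\g)$. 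Two leaves of $\pi(\M_{per}(\g)\cup\M_+(\g))$ either lie in different inter-periodic strips (separated by a periodic leaf) or share a common pair of asymptotes and are non-crossing elements of $\M(\g)$; the lamination claim for $\M_{per}(\g)\cup\M_-(\g)$ is analogous. The main obstacle is Part (2): upgrading the monotone convergence of $\tau^n c_v$ to true forward-asymptoticity of $c_v$ demands the cylindrical squeezing argument above, and the ``no double asymptote'' subclaim needs a nontrivial bigon-shortcut argument exploiting strict convexity of $F$.
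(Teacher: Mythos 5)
First, a remark on context: the paper does not prove Theorem \ref{morse periodic} at all --- it is quoted from Morse \cite{morse}, with only the observation that Morse's Riemannian arguments carry over verbatim to the Finsler setting. So there is no in-paper proof to compare against; your attempt must stand on its own, and as written it has genuine gaps.

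The most serious one is Part (3). You claim that if $\pi v$ lies on a periodic minimal geodesic $c^*$, then $c_v$ and the forward subray of $c^*$ from $\pi v$ ``share a basepoint and an endpoint at infinity, hence coincide.'' This is false as a general principle and is exactly the nontrivial content being proved: Lemma \ref{crossing minimals} explicitly permits two asymptotic rays to meet at a common initial point and then diverge, and the paper's entire notion of a \emph{forward unstable} ray (and the Aubry set $\A_+(\del)$) exists precisely because a ray emanating from a point of another ray with the same direction need \emph{not} be a subray of it. Part (3) for periodic geodesics requires a real argument using the $\tau$-translates of the branching ray and a crossing contradiction. Since your Part (2) (``no double asymptote'') and Part (4) both invoke Part (3), the gap propagates. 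Two further points: (a) your ingredient (i), a ``natural total left--right order'' on $\M(\g)$, is false in general --- two minimal geodesics in $\M(\g)$ may cross once (the heteroclinics $c_+$ and $c_-$ of Part (4) typically do, which is why the theorem only asserts that $\M_{per}\cup\M_+$ and $\M_{per}\cup\M_-$ are \emph{separately} laminations); for the specific curves you order (periodic ones, or $\Gamma$-translates of a single ray with distinct basepoints and the same endpoint at infinity) non-crossing does hold, but you must say why (e.g.\ two minimal geodesics cross at most once, while $\tau^n$-invariant ones crossing once would cross infinitely often). (b) Your shortcut argument for the ``no double asymptote'' subclaim does not close: if $c$ is doubly asymptotic to $c_0$, the comparison path (connector + segment of $c_0$ + connector) and the segment of $c$ have lengths that agree up to $o(1)$ by minimality of \emph{both} curves, so no strict inequality emerges from strict convexity alone; the classical proof instead derives a double intersection of $c$ with a translate $\tau^k c$, which is forbidden for minimizers. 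The remaining parts (the characterization of $c_\g^0,c_\g^1$ as extremal hence $\tau$-fixed, the monotone convergence of $\tau^{-n}c_v$ to a $\tau$-invariant limit in the Morse tube, and the construction of heteroclinics as limits of minimal segments between neighboring periodic leaves) follow the standard Morse--Hedlund architecture and are sound in outline, though only sketched.
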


Note that Morse proved the above statements in the Riemannian setting. However, one finds that his arguments do not rely on the Riemannian character of the metric $F$.

\begin{figure}\centering
\includegraphics[scale=0.5]{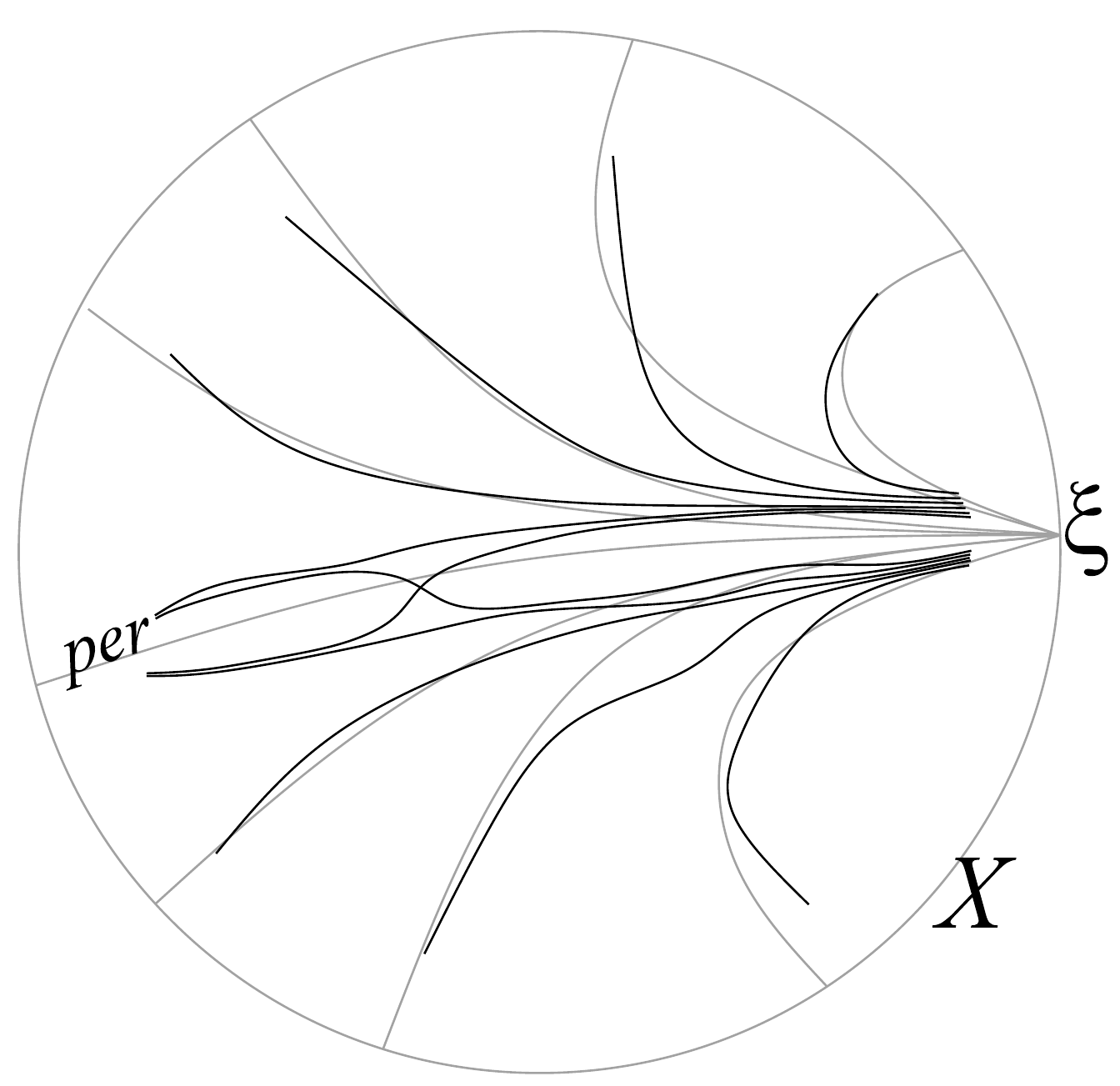}
\caption{The structure of $\RR_+(\del)$ in $X\subset \C$ in Theorem \ref{morse periodic} for $\del\in S^1$ being a fixed point of some $\tau\in\Gamma-\{\id\}$. Minimal geodesics with respect to $F$ are depicted in black, their corresponding background geodesics in gray, ``per'' labels the hyperbolic axis of $\tau$. \label{fig periodic}}
\end{figure}

Theorem \ref{morse periodic} has the following corollary, which we stated as Proposition \ref{width periodic intro} in the introduction.

\begin{cor}\label{width periodic}
Let $\del\in S^1$ be a fixed point of a group element $\tau\in\Gamma-\{\id\}$ and let $\g\in\G$ be the hyperbolic axis of $\tau$. Then
\begin{enumerate}
\item For the width we have
\begin{align*}
 w(\g) & = i(\g) = w(\del) , \\
 w(\g') & = 0 \qquad \forall \g'\in \G_+(\del)-\{\g\}.
\end{align*}
In particular, $w(\del)=0$ if and only if there is only one minimal geodesic in $\M(\g)$.

\item If $S$ is the closed strip between $c_\g^0,c_\g^1$, then
\[ \A_+(\del) = \M_{per}(\g) ~ \bigcup ~ \RR_+(\del) \cap \pi^{-1}(X-S) . \]
\end{enumerate}
\end{cor}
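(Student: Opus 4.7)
The proof naturally splits into two parts. Part~(1) is a width calculation exploiting the $\tau$-equivariance of the bounding geodesics and Lemma~\ref{periodic approx}; Part~(2) combines Morse's Theorem~\ref{morse periodic} with an ad hoc construction of competing rays via limits of minimal segments.

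For Part~(1), by Theorem~\ref{morse periodic}(1) both $c_\g^0, c_\g^1$ lie in $\M_{per}(\g)$, so $\tau$ acts on each by $F$-translation with common period $T>0$; hence $t\mapsto d_h(c_\g^0(\R), c_\g^1(t))$ is $T$-periodic and $w(\g) = \liminf = \inf = i(\g)$. For $\g'\in\G_+(\del)-\{\g\}$, I apply Lemma~\ref{periodic approx} to $\tau_n := \tau^{-n}$: since $\tau^{-1}$ has $\del$ as repelling and $\g(-\infty)$ as attracting fixed point, a suitable sequence $x_n\in \g'$ with $x_n\to \del$ has $\tau^{-n} x_n$ in a compact fundamental domain, $\tau^{-n}\g'\to \g$ in the endpoint topology, and $\tau^{-n}\g'\ne \g$ since $\g$ is the unique axis of $\tau$. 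Lemma~\ref{periodic approx} gives $w(\g')=0$, so $w(\del) = w(\g) = i(\g)$. Finally $w(\del)=0\iff i(\g)=0\iff c_\g^0\cap c_\g^1\ne\emptyset$ (by $T$-periodicity and continuity) $\iff c_\g^0 = c_\g^1$ (by the non-intersection property of bounding geodesics in Lemma~\ref{bounding geodesics morse}) $\iff \M(\g)=\{c_\g^0\}$.

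For Part~(2), Proposition~\ref{unstable Aubry} identifies $\A_+(\del)$ with the set of forward unstable rays. The inclusion $\M_{per}(\g)\subset\A_+(\del)$ is immediate: for $v\in\M_{per}(\g)$ any ray $c'$ from $c_v(t_0)$ to $\del$ passes through $\pi\M_{per}(\g)$, so $c'\in\M_{per}(\g)$ by Theorem~\ref{morse periodic}(3), and the lamination property in Theorem~\ref{morse periodic}(4) makes $c'$ a subray of $c_v$. For the inclusion $\RR_+(\del)\cap\pi^{-1}(X-S)\subset\A_+(\del)$, say $\pi v$ is right of $c_\g^0$: Theorem~\ref{morse periodic}(3) keeps $c_v$ and any competing ray $c'$ from $c_v(t_0)$ strictly right of $c_\g^0$, and by Theorem~\ref{morse periodic}(2) both are asymptotic to $\M_{per}(\g)\subset S$, hence to $c_\g^0$. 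Their Busemann functions then agree (up to constants) with the Busemann function $u$ of $c_\g^0$, so $v, \dot c'(0)\in\J_+(u)$; since $c_v(t_0) = \pi\phi_F^{t_0} v$ with $t_0>0$, Lemma~\ref{lemma fathi} gives differentiability of $u$ at $c_v(t_0)$, so the $u$-calibrated direction there is unique, forcing $\dot c'(0) = \dot c_v(t_0)$ and $c' = c_v|_{[t_0, \infty)}$.

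For the reverse inclusion, suppose $v\in\A_+(\del)$ with $\pi v\in S$ and $v\notin\M_{per}(\g)$. Theorem~\ref{morse periodic}(2)-(3) places $\pi v$ in an open gap $G$ of $\pi\M_{per}(\g)$ bounded by neighbouring periodic geodesics $c_0, c_1\in\M_{per}(\g)$, with $c_v$ asymptotic to, say, $c_1$. I derive a contradiction by constructing a competing ray from $c_v(t_0)$ asymptotic to $c_0$: for $n\in\N$, let $\sigma_n$ be a minimal $F$-segment from $c_v(t_0)$ to $c_0(n)$; the Morse Lemma~\ref{morse lemma} places each $\sigma_n$ in a $D$-tube around the hyperbolic segment from $c_v(t_0)$ to $c_0(n)$, whose tail near $c_0(n)\to\del$ lies in a small neighbourhood of $c_0$. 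By Arzela--Ascoli a subsequence of initial vectors $\dot\sigma_n(0)$ converges to some $w$ with $c_w$ a minimal ray to $\del$. By Theorem~\ref{morse periodic}(2), $c_w$ is asymptotic to some element of $\M_{per}(\g)$, which must be $c_0$ by the uniform proximity of the $\sigma_n$-tails to $c_0$. Since $c_v|_{[t_0, \infty)}$ is asymptotic to $c_1\ne c_0$, $w\ne \dot c_v(t_0)$ and $c_w$ is a competing non-subray, contradicting the forward instability of $c_v$; hence $v\in\M_{per}(\g)$. The main technical obstacle is justifying rigorously that $c_w$ is asymptotic to $c_0$ rather than to $c_1$, which requires a careful combination of the Morse Lemma with Theorem~\ref{morse periodic}(2).
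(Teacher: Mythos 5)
Your Part (1) is correct. The periodicity of $t\mapsto d_h(c_\g^0(\R),c_\g^1(t))$ (which needs only $\tau$-invariance of the set $c_\g^0(\R)$, not a common period) gives $w(\g)=i(\g)$, and your application of Lemma \ref{periodic approx} with $\tau_n=\tau^{\mp n}$ (sign chosen so that the sequence is positive for $\g'$) legitimately yields $w(\g')=0$; the paper instead reads this off from Theorem \ref{morse periodic}(2)--(3), observing that both bounding geodesics of $\g'$ are forward asymptotic to the same periodic bounding geodesic of $\g$. In Part (2), your proof of $\RR_+(\del)\cap\pi^{-1}(X-S)\subset\A_+(\del)$ leans on the unproved assertion that a ray forward asymptotic to $c_\g^0$ has, up to an additive constant, the Busemann function of $c_\g^0$. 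This is true but requires an argument (existence of the full Busemann limit via monotonicity of $t\mapsto t-d_F(x,c(t))$, plus stability of horofunctions under $d_F$-asymptotic sequences). The paper avoids it by the mechanism of Proposition \ref{w=0 implies unique weak KAM}: two rays on the same side of $c_\g^0$, both asymptotic to it, come arbitrarily close to one another near $+\infty$, and Lemma \ref{crossing minimals} then forces one to be a subray of the other. Either supply the Busemann lemma or switch to that direct argument.

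The genuine gap is in the reverse inclusion of Part (2), and it is exactly the one you flag yourself. A limit of minimal segments $\sigma_n$ from $c_v(t_0)$ to $c_0(n)$ is controlled only on initial pieces $\sigma_n|_{[0,T]}$ for fixed $T$; the endpoint condition $\sigma_n(\cdot)\in c_0(\R)$ imposes nothing on the asymptotics of the limit ray $c_w$, which could perfectly well be asymptotic to $c_1$ --- indeed could be $c_v|_{[t_0,\infty)}$ itself --- in which case no competitor is produced and the contradiction evaporates. The missing ingredient is Theorem \ref{morse periodic}(4), which you never invoke in this direction: between the neighbouring periodic geodesics $c_0,c_1$ there already exist heteroclinic minimal geodesics $c_\pm$, one asymptotic to $c_0$ as $t\to-\infty$ and to $c_1$ as $t\to+\infty$, the other with the opposite behaviour. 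If $c_v$ is forward asymptotic to $c_0$, take the heteroclinic $c_+$ whose \emph{backward} end hugs $c_0$; its translates $\tau^nc_+$ hug $c_0$ arbitrarily far forward, so some $\tau^nc_+(\R)$ meets $c_v(0,\infty)$, and the forward ray of $\tau^nc_+$ from the intersection point is a ray to $\del$ which diverges from $c_v$ (it converges to $c_1$), contradicting forward instability. This is the paper's argument; without input of this kind your step ``$c_w$ is asymptotic to $c_0$'' does not follow from the Morse Lemma alone.
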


\begin{proof}
Theorem \ref{morse periodic} shows that any ray $c_v$ with initial condition $\pi v\in X-S$ is asymptotic to the periodic bounding geodesic $c_\g^i$ with $i=0$ or $i=1$ corresponding to the connected component of $\pi v$ in $X-S$. (1) follows and, using $w(\g')=0$, the proof of Proposition \ref{w=0 implies unique weak KAM} shows that any ray $c_v$ with $\pi v\in X-S$ is forward unstable and hence belongs to $\A_+(\del)$. Theorem \ref{morse periodic} (3) shows that also the periodic geodesics in $\M_{per}(\g)$ are forward unstable, i.e. $\A_+(\del)$ contains the set on the right hand side. Now let $v\in \RR_+(\del)-\M_{per}(\g)$, such that $\pi v\in S$. Then $\pi v$ lies in an open strip $S_0\subset S$ bounded by two neighboring periodic minimal geodesics $c_0,c_1$ in $\M_{per}(\g)$, and there are two heteroclinic minimal geodesics $c_-,c_+$ between $c_0,c_1$ (Theorem \ref{morse periodic} (4)). If $c_v(t)$ is asymptotic to $c_0$, say, and if $c_+(t)$ is asymptotic to $c_1$ as $t\to\infty$, we find $n\in\Z$, such that $\tau^nc_+(\R)$ intersects $c_v(0,\infty)$. This shows that $c_v$ is not forward unstable, hence does not belong to the Aubry set $\A_+(\del)$ by Proposition \ref{unstable Aubry}.
\end{proof}

\subsection{Recurrent directions}

In this subsection we prove Theorem \ref{thm intro recurrent} from the introduction (cf. Theorem \ref{recurrent thm 1} and its Corollaries \ref{recurrent thm 2}, \ref{recurrent thm 3}). Recall that a geodesic ray $c:[0,\infty)\to X$ is forward recurrent, if there exists a sequence $\tau_n\in\Gamma$ and a sequence of times $t_n\to \infty$, such that $\tau_n c[t_n,\infty)$ converges to $c[0,\infty)$ with respect to the euclidean Hausdorff metric in $\C$. This definition is equivalent to the usual definition of forward recurrence of the projected geodesic in $SM$.

The idea behind the results of this subsection is that the structure of $\RR_+(\del)$, if $\del$ is the endpoint of a forward recurrent geodesic $\g\in\G$, is very similar to the case where $\del$ is fixed by some $\tau\in\Gamma-\{\id\}$. If $w(\del)=0$, then $\RR_+(\del)$ has a simple structure by Proposition \ref{w=0 implies unique weak KAM}, hence we assume $w(\del)>0$.

For the whole subsection, fix $\del\in S^1$ with $w(\del)>0$.

\begin{thm}\label{recurrent thm 1}
If $\g\in \G_+(\del)$ is forward recurrent, then
\begin{enumerate}
\item there is a forward unstable geodesic in $\M(\g)$,

\item $w(\g)=i(\g)=w(\del)$.
\end{enumerate} 
\end{thm}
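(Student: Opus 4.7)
The plan is to handle (2) first as a quick application of the width semicontinuity of Lemma \ref{width semi-cont}, and then attack (1) by extracting a candidate unstable geodesic as a limit under the recurrence sequence, with the instability claim being the main obstacle.

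For (2), I would unpack the forward recurrence of $\g$ as a sequence $\tau_n \in \Gamma$ with $t_n \to \infty$ and $\tau_n \g[t_n,\infty) \to \g[0,\infty)$ in the euclidean Hausdorff sense. Since a hyperbolic geodesic in $\G$ is determined by a point and its forward endpoint on $S^1$, this upgrades to convergence $\tau_n \g \to \g$ in $\G$, and $\tau_n$ is positive for $\g$ in the sense of Definition \ref{def positive gamma}: taking $x_n := \g(t_n)$, we have $x_n \to \g(\infty)$ while $\tau_n x_n \to \g(0)$ stays in a compact set. Lemma \ref{width semi-cont} with $\g' = \g$ then yields $w(\del) = w(\g(\infty)) \leq i(\g)$. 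The reverse chain $i(\g) \leq w(\g) \leq w(\del)$ is immediate from Remark \ref{width bounded} together with the definition of $w(\del)$, so $w(\g) = i(\g) = w(\del)$.

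For (1), I would set $v_n := \tau_n \dot c_\g^0(t_n)$. The Morse Lemma places $\pi v_n = \tau_n c_\g^0(t_n)$ within $d_h$-distance $D$ of $\tau_n \g(t_n) \to \g(0)$, so on a subsequence $v_n \to v^* \in SX$. The geodesic $c_{v^*}$, being a limit of minimal geodesics whose endpoints satisfy $\tau_n \g(\pm\infty) \to \g(\pm\infty)$, lies in $\M(\g)$. I claim $c_{v^*}$ is forward unstable. Suppose instead some minimal ray $c'$ has $c'(0) = c_{v^*}(a)$ with $a > 0$, $c'(\infty) = \del$ and $\dot c'(0) \neq \dot c_{v^*}(a)$. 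Lemma \ref{crossing minimals} then forces a $d_h$-separation of the approximants $c_{v_n}$ from $c'$ past times $a$ and $\delta > 0$ on the respective rays, for large $n$. Pulling back by the isometries $\tau_n^{-1}$ produces a family of minimal rays $c_n' := \tau_n^{-1} c'$, each starting arbitrarily close to $c_\g^0(t_n+a)$ (since $\tau_n c_\g^0(t_n+a) \to c_{v^*}(a)$), ending at $\tau_n^{-1}\del \to \del$, and separated in $d_h$ from $c_\g^0$ past time $t_n + a$.

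The main obstacle is to turn this family of ``ghost competitors'' into a contradiction with the extremality of $c_\g^0$ as the rightmost element of $\M(\g)$ from Lemma \ref{bounding geodesics morse}. My plan is to argue that $c'$ sits on a definite side of $c_{v^*}$ (say the right, by left/right symmetry), that this side is preserved by the orientation-preserving isometries $\tau_n^{-1}$, and hence that the $c_n'$ all sit strictly to the right of $c_\g^0$ past time $t_n + a$; then, anchoring backward geodesic segments from $c_\g^0(-N)$ to points on $c_n'$ and passing to the limit $N, n \to \infty$ via Morse and a diagonal argument, one extracts a minimal geodesic in $\M(\g)$ strictly to the right of $c_\g^0$, the desired contradiction. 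An alternative I would also pursue works in weak KAM language: normalise the shifted Busemann functions $u_0 \circ \tau_n^{-1}$ (where $u_0$ is the Busemann of $c_\g^0$) to a common base value, extract a $C^0_{loc}$-limit $u^* \in \H_+(\del)$ (using $\tau_n^{-1}\del \to \del$), observe that $c_{v^*}$ is $u^*$-calibrated by construction, and then use Lemma \ref{lemma fathi} applied to $u^*$ and to the Busemann function of $c'$ to rule out the competitor. In either route, the delicate point is controlling the limit behaviour of the pulled-back rays to force the desired contradiction.
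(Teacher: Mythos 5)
Part (2) of your proposal is correct and is exactly the paper's argument: recurrence gives a positive sequence $\tau_n$ with $\tau_n\g\to\g$, Lemma \ref{width semi-cont} gives $w(\del)\leq i(\g)$, and the trivial chain $i(\g)\leq w(\g)\leq w(\del)$ closes the loop.

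Part (1) has a genuine gap, and it starts with the choice of candidate. You propose to show instability of $c_{v^*}=\lim\tau_n c_\g^0$, which is in general an \emph{interior} geodesic of the strip between $c_\g^0$ and $c_\g^1$ (the paper explicitly does not know whether the bounding geodesics are themselves recurrent, so $c_{v^*}\neq c_\g^0$ must be allowed). For an interior geodesic a competitor ray $c'$ may leave $c_{v^*}$ to either side while remaining inside the big strip, so nothing you extract from it can contradict the extremality of $c_\g^0$ in Lemma \ref{bounding geodesics morse}; your appeal to ``left/right symmetry'' does not repair this, and your pulled-back rays $c_n'=\tau_n^{-1}c'$ are controlled only relative to $\tau_n^{-1}c_{v^*}$, not relative to $c_\g^0$ (they also end at $\tau_n^{-1}\del$, whose convergence to $\del$ needs its own argument). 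The weak KAM variant fails for the same structural reason: when $w(\del)>0$, two weak KAM solutions $u^*$ and the Busemann function of $c'$ need not coincide, so having a $u^*$-calibrated ray and a differently calibrated ray through the same point is not a contradiction.

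The paper instead proves that the \emph{bounding} geodesic $c_\g^0$ is forward unstable, and the two ingredients you are missing are: (i) Lemma \ref{partial instability bounding}, which forces any competitor $c$ starting on $c_\g^0$ to enter the strip on the $c_\g^1$ side, eliminating the two-sided ambiguity; and (ii) a quantitative consequence of part (2): the limits $c_0,c_1$ of $\tau_nc_\g^i$ lie in $\M(\g)$ and satisfy $\inf_t d_h(c_0(\R),c_1(t))=i(\g)=w(\g)$, which forces $c_0$ to come arbitrarily close to $c_\g^0$ along a sequence of times $t_m\to\infty$. Since Lemma \ref{crossing minimals} keeps the competitor $c[\e,\infty)$ at a uniformly positive distance from $c_\g^0$, for large $m,n$ a point of $\tau_nc_\g^0$ is trapped in the thin strip between $c_\g^0$ and $c$, and Lemma \ref{crossing minimals} then yields the contradiction. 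Without step (ii) — i.e.\ without using the equality $w(\g)=i(\g)$ you proved in part (2) as a dynamical input for part (1) — there is no mechanism forcing the recurrence images of $c_\g^0$ back into the forbidden region, and the argument does not close.
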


Presently, we do not know if $c_\g^0,c_\g^1$ are recurrent, if $\g$ is recurrent.

\begin{proof}
Let $\tau_n\in\Gamma$ be positive for $\g$, such that $\tau_n\g\to \g$. By Lemma \ref{width semi-cont}, $i(\g)\geq w(\del)$. On the other hand, $i(\g) \leq w(\g)\leq w(\del)$ by definition, so (2) follows. Moreover, the minimal geodesics $\tau_n c_\g^i$ have limits $c_i$ in $\M(\g)$ with
\[(*) \qquad  \inf_{t\in\R}d_h(c_0(\R),c_1(t)) = i(\g) \]
($\leq i(\g)$ follows from $\dot c_i\in\M(\g)$; if the infimum would be $< i(\g)$, then also $w(\g)<i(\g)$). Next, we observe that w.l.o.g., $\g$ is non-periodic and using Lemma \ref{one-sided approx}, the approximation $\tau_n\g\to \g$ can be assumed to satisfy $\tau_n\g\cap \g\neq \emptyset$ for all $n$. Assume that $\tau_n\g(\pm\infty) > \g(\pm\infty)$ in the counterclockwise orientation of $S^1$, the other case being analogous. If $c:[0,\infty)\to X$ is a ray initiating from $c_\g^0(\R)$, it cannot lie right of $c_\g^0$ by Lemma \ref{partial instability bounding}. Suppose $c$ is not a subray of $c_\g^0$, then for $\e>0$, $c[\e,\infty)$ has a uniformly positive distance from $c_\g^0(\R)$ by Lemma \ref{crossing minimals}. By $(*)$, there exist times $t_m\to \infty$, such that $d_h(c_0(t_m),c_\g^0(\R))\to 0$. For large enough $m$, we then find a point $c_0(t_m)$ and hence for large $n$ a point $x\in \tau_nc_\g^0(\R)$ in the closed strip bounded by $c_\g^0(\R), c[0,\infty)$ (using the asymptotic behavior of $\tau_nc_\g^0$). A contradiction is reached using Lemma \ref{crossing minimals}, cf. Figure \ref{fig recurrent unstable}. This shows that $c_\g^0$ is forward unstable and hence (1) holds.
\end{proof}

\begin{figure}[!htb]\centering
\includegraphics[scale=0.7]{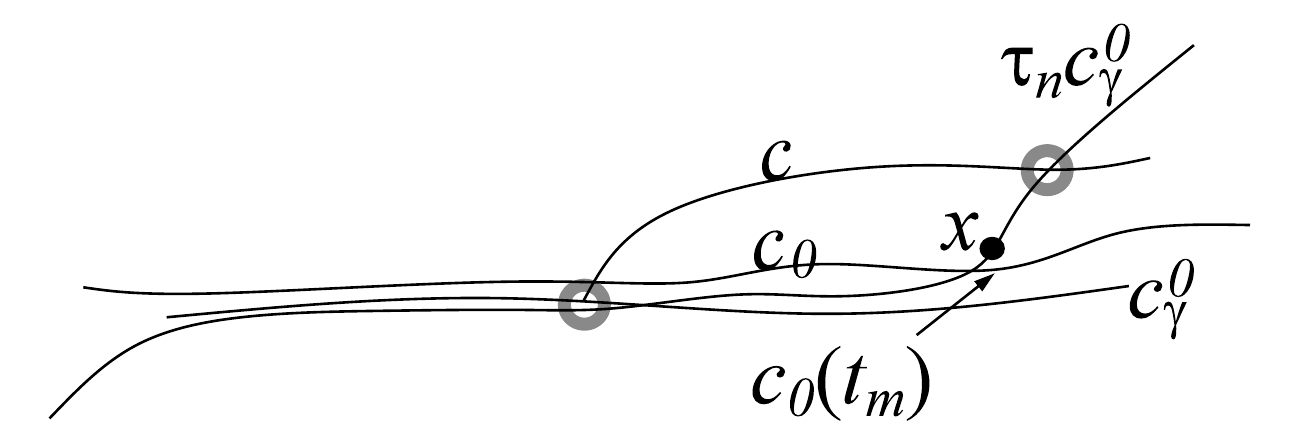}
\caption{The argument for the instability of $c_\g^0$. \label{fig recurrent unstable}}
\end{figure}

\begin{remark}\label{bem recurrent thm 1}
We saw in the proof of Theorem \ref{recurrent thm 1} that one bounding geodesic $c_\g^i$ is positively unstable, say $c_\g^0$. Let $A\subset S^1$ be the points $\del'$, such that $\g(-\infty)<\del'< \g(\infty)=\del$ in the counterclockwise order of $S^1$. Then for all $\g'\in\G_+(\del)$ with $\g(-\infty)\in A$ we have $w(\g')=0$ and hence all forward rays initiating in the connected component $X_0$ of $X-c_\g^0(\R)$ touching $A\subset S^1$ with endpoint $\del$ are forward unstable (by the arguments in the proof of Proposition \ref{w=0 implies unique weak KAM}).

Proof: Under a positive sequence $\tau_n$ for $\g$ with $\tau_n\g\to \g$, both bounding geodesics $c_{\g'}^i$ have limits in $\M(\g)$ right of the limit $c_0=\lim \tau_nc_\g^0$ by the instability of $c_\g^0$. We saw in the proof of Theorem \ref{recurrent thm 1}, that $c_0$ and $c_\g^0$ come close to each other near $+\infty$. Also the limits of $\tau_nc_{\g'}^i$ both come close to $c_\g^0$, since $\tau_nc_\g^0$ and $\tau_nc_{\g'}^i$ do not intersect. $w(\g')=0$ follows.
\end{remark}

\begin{cor}\label{recurrent thm 2}
There are at most two positively recurrent hyperbolic geode\-sics in $\G_+(\del)$.
\end{cor}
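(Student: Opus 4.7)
The plan is to combine Theorem~\ref{recurrent thm 1}(2) with Remark~\ref{bem recurrent thm 1} to show that each forward recurrent $\g \in \G_+(\del)$ rules out a whole open arc on $S^1 \setminus \{\del\}$ adjacent to $\g(-\infty)$: no further forward recurrent geodesic in $\G_+(\del)$ can have its $-\infty$ endpoint in that arc. Given such a ``forbidden arc'' attached to each recurrent geodesic, the corollary becomes a one-line pigeonhole argument on $S^1 \setminus \{\del\}$.

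More precisely, fix a forward recurrent $\g \in \G_+(\del)$. By Remark~\ref{bem recurrent thm 1} (applied to a positive sequence $\tau_n$ for $\g$ witnessing recurrence), one of the two open arcs $A_\g$ of $S^1 \setminus \{\g(-\infty), \del\}$ has the property that every $\g' \in \G_+(\del)$ with $\g'(-\infty) \in A_\g$ satisfies $w(\g') = 0$. On the other hand, Theorem~\ref{recurrent thm 1}(2) forces $w(\g') = w(\del) > 0$ whenever $\g' \in \G_+(\del)$ is forward recurrent. Together these imply that no forward recurrent $\g' \in \G_+(\del)$ can have $\g'(-\infty) \in A_\g$.

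Now suppose for contradiction that $\g_1, \g_2, \g_3 \in \G_+(\del)$ are three distinct forward recurrent geodesics. Their $-\infty$ endpoints give three distinct points of $S^1 \setminus \{\del\}$, which is homeomorphic to an open interval; after relabeling I can assume that $\g_2(-\infty)$ strictly separates $\g_1(-\infty)$ from $\g_3(-\infty)$ along this interval. Applying the previous step to $\g_2$, the forbidden arc $A_{\g_2}$ is one of the two components of $(S^1 \setminus \{\del\}) \setminus \{\g_2(-\infty)\}$, and since $\g_2(-\infty)$ separates the other two endpoints, $A_{\g_2}$ must contain at least one of $\g_1(-\infty), \g_3(-\infty)$. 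But then that $\g_i$ cannot be forward recurrent, contradicting our standing assumption.

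The main obstacle is not in the corollary itself but in cashing in Remark~\ref{bem recurrent thm 1}: all the substance lives there (and in Theorem~\ref{recurrent thm 1}), after which the argument is pure combinatorics. The one small subtlety to verify is that ``one side'' in Remark~\ref{bem recurrent thm 1} really reaches all the way to $\del$ along $S^1$, so that regardless of which of the two arcs $A_{\g_2}$ turns out to be, exactly one of $\g_1(-\infty), \g_3(-\infty)$ is forced inside it.
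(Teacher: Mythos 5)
Your proposal is correct and is essentially the paper's own argument: pick the middle geodesic $\g_2$ of the three, use Remark \ref{bem recurrent thm 1} to get $w(\g')=0$ on one whole side of $\g_2$ (an arc reaching from $\g_2(-\infty)$ to $\del$), and contradict $w(\g_1)=w(\g_3)=w(\del)>0$ from Theorem \ref{recurrent thm 1}(2). The subtlety you flag is indeed settled by the explicit description of the arc $A$ in Remark \ref{bem recurrent thm 1}.
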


\begin{proof}
Suppose the contrary and let $\g_1,\g_2,\g_3\in \G$ with $\g_i(\infty)=\del$ all be forward recurrent, distinct and such that $\g_i(-\infty)$ are increasing with respect to the counterclockwise orientation of $S^1$. But Remark \ref{bem recurrent thm 1} shows that $w(\g')=0$ for all $\g'$ on one side of $\g_2$, contradicting $w(\g_1)=w(\g_3)=w(\del)>0$ in Theorem \ref{recurrent thm 1} (2).
\end{proof}

The next corollary shows that for all but countably many $\del\in S^1$ with $w(\del)>0$, such that there is a forward recurrent $\g\in\G_+(\del)$, the set $\RR_+(\del)$ has the same structure as in the case where $\del$ is fixed by some $\tau\in\Gamma-\{\id\}$, cf. Corollary \ref{width periodic} (the only difference being that there can be more forward unstable rays between $c_\g^0,c_\g^1$).

\begin{cor}\label{recurrent thm 3}
If $\del$ satisfies the conclusion of Theorem \ref{thm countable unstable} (i.e. all bounding geodesics $c_\g^i$ with $\g(\infty)=\del$ are forward unstable), then there can be at most one forward recurrent direction in $\G_+(\del)$ and if there is a forward recurrent direction $\g\in \G_+(\del)$, then
\begin{enumerate}
\item for the width we have
\begin{align*}
 w(\g) & = i(\g) = w(\del), \\
 w(\g') & = 0 \qquad \forall \g'\in \G_+(\del)-\{\g\},
\end{align*}

\item if $S$ is the open strip between $c_\g^0,c_\g^1$, then
\[ \A_+(\del) ~\supset ~ \RR_+(\del) \cap \pi^{-1}( X-S) . \]
\end{enumerate}
\end{cor}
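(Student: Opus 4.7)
My plan is to combine Theorem \ref{recurrent thm 1} with a two-sided application of Remark \ref{bem recurrent thm 1}, exploiting the standing hypothesis on $\del$ which ensures that \emph{both} bounding geodesics $c_\g^0, c_\g^1$ of any $\g\in\G_+(\del)$ are forward unstable. Part (1) and the uniqueness of the recurrent direction follow quickly; the main work is part (2), where I adapt the argument of Proposition \ref{w=0 implies unique weak KAM} to a suitable $\g''\ne \g$.

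For part (1), Theorem \ref{recurrent thm 1} applied to the forward recurrent $\g$ gives $w(\g)=i(\g)=w(\del)$ directly. Remark \ref{bem recurrent thm 1}, applied to the unstable $c_\g^0$, yields $w(\g')=0$ for every $\g'\in\G_+(\del)$ with $\g'(-\infty)$ on one of the two arcs of $S^1-\{\g(\pm\infty)\}$; the symmetric argument with $c_\g^1$ covers the other arc, so $w(\g')=0$ for every $\g'\in\G_+(\del)-\{\g\}$. If $\g_2\in\G_+(\del)-\{\g\}$ were also forward recurrent, Theorem \ref{recurrent thm 1} applied to $\g_2$ would give $w(\g_2)=w(\del)>0$, contradicting the vanishing just established; hence the recurrent direction is unique.

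For part (2), take $v\in\RR_+(\del)\cap\pi^{-1}(X-S)$; by Proposition \ref{unstable Aubry} it suffices to show $c_v$ is forward unstable. If $\pi v$ lies on $c_\g^0$ or $c_\g^1$, this is immediate from the forward instability of that bounding geodesic together with $c_v(\infty)=c_\g^i(\infty)=\del$. So suppose $\pi v\in X-\bar S$. The key geometric observation is that any ray $c:[0,\infty)\to X$ initiating in $X-\bar S$ with $c(\infty)=\del$ stays in $X-\bar S$: the only way for $c$ to meet $c_\g^i$ at some $t_1>0$ is transversely (uniqueness of geodesics rules out a tangential meeting since that would force $c(0)\in c_\g^i\subset\bar S$), but then $c|_{[t_1,\infty)}$ would be a ray initiating on $c_\g^i$ with endpoint $\del$, forced by instability of $c_\g^i$ to be a subray of $c_\g^i$ and hence tangent at $c(t_1)$, contradicting the transverse crossing. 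In particular, both $c_v$ and any competitor $c'$ with $c'(0)=c_v(t_0)$, $c'(\infty)=\del$ stay in $X-\bar S$.

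Now pick any $u\in\H_+(\del)$; Proposition \ref{bounding geodesics for rays} provides $\g''\in\G_+(\del)$ such that $c_v(t_0)$ lies between the $u$-calibrated bounding geodesics $c_{\g'',u}^0, c_{\g'',u}^1$, which lie in $\M(\g'')$ and hence in the closed strip between $c_{\g''}^0, c_{\g''}^1$. Since $c_v(t_0)\in X-\bar S$ we must have $\g''\ne\g$, so part (1) gives $w(\g'')=0$. The argument of Proposition \ref{w=0 implies unique weak KAM} then shows that $c_{\g''}^0, c_{\g''}^1$ are themselves forward unstable. By Lemma \ref{partial instability bounding}, both $c_v|_{[t_0,\infty)}$ and $c'$ lie in the closed strip between $c_{\g''}^0, c_{\g''}^1$, whose hyperbolic width collapses at $\del$ thanks to $w(\g'')=0$; combining this with Lemma \ref{crossing minimals} forces $c'$ to be a subray of $c_v$, so $c_v$ is forward unstable and $v\in\A_+(\del)$. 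The main obstacle is precisely this last step: weaving together the barrier argument (from instability of $c_\g^0, c_\g^1$) with the narrow-strip collapse (from $w(\g'')=0$) to kill all non-subray competitors for $c_v$.
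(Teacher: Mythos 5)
Your proof is correct and follows essentially the same route as the paper's (very terse) proof: Theorem \ref{recurrent thm 1} combined with a two-sided application of Remark \ref{bem recurrent thm 1} (legitimate here since the standing hypothesis makes both bounding geodesics of $\g$ forward unstable), with your part (2) being a careful writeup of that remark's appeal to the argument of Proposition \ref{w=0 implies unique weak KAM}. The only cosmetic difference is that you deduce uniqueness of the recurrent direction directly from part (1) rather than via Corollary \ref{recurrent thm 2}; both rest on the same mechanism.
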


\begin{proof}
We now by Corollary \ref{recurrent thm 2} that there can be at most two recurrent directions $\G_+(\del)$, which both have maximal width. Arguing as in Remark \ref{bem recurrent thm 1} and using our assumption that the bounding geodesics of the two directions cannot intersect, the width of one of the two recurrent directions would vanish. Items (i) and (ii) follow from Theorem \ref{recurrent thm 1} and Remark \ref{bem recurrent thm 1}. 
\end{proof}

\subsection{Simple directions}

In this short subsection we prove Theorem \ref{thm simple}, i.e. if $\del\in S^1$ is not a fixed point of $\Gamma$ and $\G_+(\del)$ contains a geodesic $\g$, which is simple (i.e. $\g,\tau \g$ are disjoint for all $\tau\in\Gamma-\{\id\}$), then $w(\del)=0$.

\begin{proof}[Proof of Theorem \ref{thm simple}]
Let $\g\in\G$ be simple and $\del=\g(\infty)$, such that $\del$ is not fixed under any $\tau\in\Gamma-\{\id\}$. Choose any positive sequence $\tau_n\in\Gamma$, such that $\tau_n\g$ converges to some $\g'\in\G$. Since $\g$ is simple, also $\g'$ is simple. If $\g'$ is the axis of some $\tau\in\Gamma-\{\id\}$, apply Lemma \ref{periodic approx} and Remark \ref{bem periodic approx}, which shows $w(\del)=0$ in this case. If $\g'$ is not an axis, there exist $\tau_n'\in\Gamma$ positive for $\g'$, such that $\tau_n'\g'$ converge to some $\g''$ with $\tau_n'\g'\cap \g''=\emptyset$, since $\g'$ is simple. Apply Lemma \ref{one-sided approx}, showing $w(\g')=0$. Now apply Lemma \ref{width semi-cont} to obtain $w(\del) \leq i(\g')\leq w(\g')=0$.
\end{proof}

\subsection{Proof of Theorem \ref{thm A}}\label{section (A)}

On the 2-torus, the only directions that admit more than one weak KAM solution are the rational directions, i.e. those which have a periodic geodesic.

\abs

{\bf Question.} Do we have $w(\del)=0$ for all $\del\in S^1$ not being fixed under any non-trivial group element of $\Gamma$?\abs

In this subsection we show that the answer is affirmative, if we make an additional assumption.

\abs

{\bf Assumption (A).} For any $\del\in S^1$, the union $L(\del)$ of all bounding geodesics $c_\g^0,c_\g^1$ with $g\in\G_+(\del)$ is a lamination of $X$, i.e. the bounding geodesics pairwise do not intersect. \abs

(A) is fulfilled, e.g., if $F$ has no conjugate points (in this case all geodesics are unstable, cf. Theorem 12.1 in \cite{morse_hedlund}) and Theorem \ref{thm countable unstable} shows that (A) is always ``almost'' fulfilled.

\begin{thm}\label{thm assumption A}
If $F$ satisfies the assumption (A), then $w(\del)=0$ for all $\del\in S^1$ not being fixed under any non-trivial group element of $\Gamma$.
\end{thm}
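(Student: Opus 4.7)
I would argue by contradiction: suppose $\del\in S^1$ is not fixed under any $\tau\in\Gamma-\{\id\}$ yet $w(\del)>0$. The strategy is to extract a forward recurrent limit hyperbolic geodesic $\g_*$ inheriting positive width, and then use the classification results of Section \ref{section width} (Theorems \ref{dense width 0}, \ref{thm simple}, \ref{thm intro recurrent} together with Corollary \ref{width periodic}) to whittle it down to the ``non-simple, non-axis, forward recurrent, non-periodic endpoint'' case, which can finally be contradicted via (A). Concretely, pick $\g\in\G_+(\del)$ with $w(\g)>0$ and consider $\pi_*\dot\g(0)\in SM$. Since $M$ is compact, the $\om$-limit set of $\pi_*\dot\g(0)$ under the hyperbolic geodesic flow is a nonempty closed invariant set, so it contains a minimal subset in which every orbit is forward and backward recurrent. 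Lifting a point of such a minimal subset gives a forward recurrent $\g_*\in\G$, and by construction there is a sequence $\tau_n\in\Gamma$ positive for $\g$ with $\tau_n\g\to\g_*$; Lemma \ref{width semi-cont} then gives $0<w(\del)\le i(\g_*)$.

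I next classify $\g_*$. If $\g_*$ were the axis of some $\tau\in\Gamma-\{\id\}$, Lemma \ref{periodic approx} applied to $\tau_n\g\to\g_*$ would yield $w(\g)=0$ unless $\tau_n\g=\g_*$ for some $n$, which in turn would force $\del$ to be periodic --- a contradiction in either subcase, so $\g_*$ is non-axis. If $\del_*:=\g_*(\infty)$ were fixed by some $\tau'\in\Gamma-\{\id\}$, then since $\g_*$ is not the axis of $\tau'$, Corollary \ref{width periodic}(1) would force $w(\g_*)=0$, contradicting $i(\g_*)>0$; hence $\del_*$ is non-periodic. Theorem \ref{recurrent thm 1}(2) then gives $w(\g_*)=i(\g_*)=w(\del_*)>0$, and Theorem \ref{thm simple} forces $\g_*$ to be non-simple: there exists $\tau_0\in\Gamma-\{\id\}$ with $\tau_0\g_*\ne\g_*$ and $\tau_0\g_*\cap\g_*\ne\emptyset$.

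The final step is to bring (A) into play at $\del_*$. By Corollary \ref{recurrent thm 3}, $\g_*$ is the \emph{unique} forward recurrent geodesic in $\G_+(\del_*)$, and every other $\g'\in\G_+(\del_*)$ satisfies $w(\g')=0$. However, $\tau_0\g_*$ is itself forward recurrent with $w(\tau_0\g_*)=w(\g_*)>0$ by $\Gamma$-invariance, while it lives in $\G_+(\tau_0\del_*)$ with $\tau_0\del_*\ne\del_*$. Choosing $\tau'_n\in\Gamma$ positive for $\g_*$ with $\tau'_n\g_*\to\g_*$ and setting $\sigma_n:=\tau_0\tau'_n$, one obtains a positive sequence for $\g_*$ with $\sigma_n\g_*\to\tau_0\g_*$ and with $\sigma_n\g_*\cap\tau_0\g_*=\tau_0(\tau'_n\g_*\cap\g_*)\ne\emptyset$ for large $n$ (else Lemma \ref{one-sided approx} applied to $\tau'_n$ would collapse $w(\g_*)$). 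The plan is then to exploit the lamination property of $L(\del_*)$ to track the bounding geodesics $c_{\g_*}^i$ against their $\tau_0$-translates $c_{\tau_0\g_*}^i=\tau_0 c_{\g_*}^i$ across the transverse crossing $\g_*\cap\tau_0\g_*$, and --- using the Morse-Lemma-bounded proximity of bounding geodesics to their hyperbolic backgrounds --- to force $w(\g_*)=0$, the desired contradiction.

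The principal obstacle is precisely this final step. Assumption (A) is a \emph{per-direction} lamination hypothesis, while non-simplicity of $\g_*$ produces transverse crossings of the $\Gamma$-orbit of $\g_*$ across the two distinct directions $\del_*$ and $\tau_0\del_*$; this is the configuration left untouched by Lemmas \ref{one-sided approx}, \ref{periodic approx} (which handle disjointness and periodic-axis approximation) and by Theorems \ref{dense width 0}, \ref{thm simple} (which handle dense orbits and simple geodesics). Closing the argument requires a delicate geometric analysis coupling the lamination coherence of $L(\del_*)$ with the geometry of two transversely intersecting recurrent hyperbolic geodesics and their respective $F$-minimal bounding strips.
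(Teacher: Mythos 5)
There is a genuine gap, and it sits exactly where you acknowledge it: the final step, where assumption (A) must actually produce a contradiction, is not carried out. Your reductions up to that point are sound (extracting a forward recurrent limit $\g_*$ with $i(\g_*)\geq w(\del)>0$, ruling out the axis case via Lemma \ref{periodic approx}, the periodic-endpoint case via Corollary \ref{width periodic}, and simplicity via Theorem \ref{thm simple}), but the configuration you arrive at --- a non-simple recurrent $\g_*$ crossed transversely by a translate $\tau_0\g_*$ --- does not by itself clash with anything: $\tau_0\g_*$ lies in $\G_+(\tau_0\del_*)$ with $\tau_0\del_*\neq\del_*$, so the uniqueness of recurrent geodesics in $\G_+(\del_*)$ from Corollary \ref{recurrent thm 3} is never violated, and assumption (A), being a per-direction statement, says nothing directly about how $L(\del_*)$ and $L(\tau_0\del_*)$ interact. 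The ``delicate geometric analysis'' you defer is the entire content of the theorem.

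The paper's mechanism is different and is worth internalizing: it is a width-doubling argument against a global supremum. One sets $W:=\sup\{w(\g):\g$ not an axis$\}$, assumes $W>0$, and picks $\g$ with $W-\e\leq w(\g)\leq W$. Pushing $\g_n=\tau_n\g\to\g'$ (non-axis, with $\g_n\cap\g'\neq\emptyset$), Lemma \ref{width semi-cont} gives $i(\g')\geq W-\e$, and one tracks an intersection point $x_n$ of $c_{\g'}^1$ with $c_{\g_n}^0$. Pushing again by a positive sequence $\tau_n'$ for $\g'$ with $\tau_n'\g'\to\g''$ and $\tau_n'x_n$ bounded, the translated curves $\tau_n'c_{\g_n}^0$ limit onto a minimal geodesic that, \emph{by (A)}, cannot cross the bounding geodesics of $\g''$ and hence must itself lie in $\M(\g'')$. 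This stacks two strips of width $\geq W-\e$ inside $\M(\g'')$ near $+\infty$, so $w(\g'')\geq 2(W-\e)>W$, forcing $\g''$ to be an axis; Lemma \ref{periodic approx} then yields $w(\g')=0$, contradicting $i(\g')\geq W-\e>0$. Note that (A) enters precisely to guarantee that the second strip's bounding geodesic lands in $\M(\g'')$ rather than crossing out of it; without a quantity like $W$ that can be ``doubled,'' working at a single direction $\del_*$ as you propose gives no way to convert the lamination hypothesis into a numerical contradiction.
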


The idea in the following proof is similar to an idea in Section 3 of \cite{coudene}.

\begin{proof}
\begin{figure}[!htb]\centering
\includegraphics[scale=0.63]{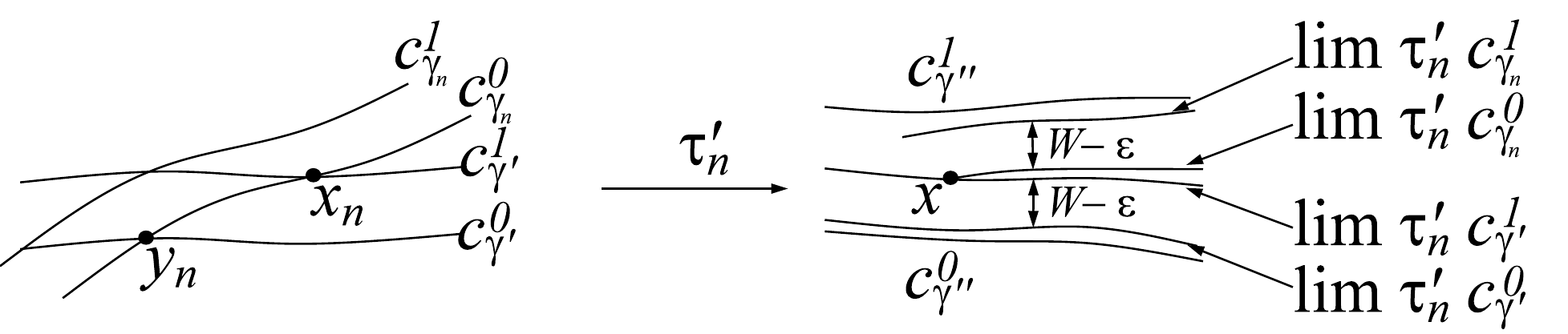}
\caption{The objects in the proof of Theorem \ref{thm assumption A}. \label{fig aussmption_A}}
\end{figure}

Figure \ref{fig aussmption_A} depicts the following arguments. Set
\begin{align*}
W := \sup\{ w(\g) : \text{$\g\in \G$, such that $\g$ is not the axis of any $\tau\in\Gamma-\{\id\}$} \} ,
\end{align*}
assume $W>0$ and choose a small $\e>0$ and $\g\in\G$ with
\[ W-\e\leq w(\g)\leq W. \]
Let $\tau_n\in\Gamma$ be a positive sequence for $\g$, such that $\g_n := \tau_n\g\to \g'$ for some $\g'\in\G$. We may assume w.l.o.g., that $\g_n\cap \g'\neq \emptyset$ and $\g'$ is not an axis, since otherwise $w(\g)=0$ (Lemmata \ref{one-sided approx} and \ref{periodic approx}). We assume moreover that $\g_n(\pm\infty)>\g'(\pm\infty)$ for all $n$ in the counterclockwise orientation of $S^1$, the other case being analogous. By Lemma \ref{width semi-cont}, we have
\[ W \geq w(\g') \geq i(\g') \geq w(\g) \geq W-\e. \]
There is a point of intersection $x_n$ of the geodesics $c_{\g'}^1(\R), c_{\g_n}^0(\R)$. Let $\tau_n'\in\Gamma$ be a positive sequence for $\g'$, such that $\tau_n'\g'\to \g''\in \G$ and such that $\tau_n'x_n$ converges to some $x\in X$. Let $y_n$ be a point of intersection of $c_{\g'}^0(\R), c_{\g_n}^0(\R)$, then $d_h(x_n,y_n)\to\infty$ (the limit of $c_{\g_n}^0$ lies left of $c_{\g'}^0$). For $n\to\infty$, the segment of $\tau_n'c_{\g_n}^0$ between $\tau_n'x_n,\tau_n'y_n$ becomes a backward ray in the strip between $\lim \tau_n'c_{\g'}^i$ in $\M(\g'')$ and since by the assumption (A), no minimal geodesics of different type can intersect, the limit of $\tau_n'c_{\g_n}^0$ also belongs to $\M(\g'')$. But now we have in the forward end of $\M(\g'')$ to strips of width $\geq W-\e$, one bounded by $\lim\tau_n'c_{\g'}^i$ and the other bounded by $\lim\tau_n'c_{\g_n}^i$, showing $w(\g'')\geq 2(W-\e)$ and hence $\g''$ is an axis. Lemma \ref{periodic approx} then shows $w(\g')=0$, a contradiction.
\end{proof}

\appendix

\section{Weak KAM solutions in dimension two}\label{appendix}

In this appendix we prove results about weak KAM solutions, which are special in dimension two. Even though the results are not used in the paper, the techniques might prove useful in the future. As before, we only assume that $F,g_h$ are uniformly equivalent for the Morse Lemma to hold.

Our first lemma is true in any dimension, replacing $S^1$ with the so-called Gromov boundary.

\begin{lemma}\label{J(u) semi-cont}
The sets $\H_+\cap \{u(x_0)=0\}$ for fixed $x_0\in X$ are sequentially compact in the $C_{loc}^0$ topology and if $u,u_n\in \H_+$ with $u_n\to u$ in $C_{loc}^0$, then
\[ \lim_{n\to\infty} \J_+(u_n) := \{ v\in SX ~|~ \exists v_n\in\J_+(u_n) : v_n\to v \} = \J_+(u). \]
Moreover, the asymptotic direction $\om:\H_+\to S^1$ is continuous.
\end{lemma}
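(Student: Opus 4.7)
The plan is to establish the three assertions in sequence. For the \textbf{sequential compactness}, observe first that every $u\in\H_+$ is $c_F$-Lipschitz with respect to $d_h$: from $u(y)-u(x)\leq d_F(x,y)\leq c_F d_h(x,y)$ and the symmetry of $d_h$. Hence $\H_+\cap\{u(x_0)=0\}$ is equicontinuous and uniformly bounded on compacta, and Arzel\`a-Ascoli extracts a $C^0_{loc}$-convergent subsequence $u_n\to u$. The dominance condition passes to pointwise limits. For the calibrated-ray condition, fix $x\in X$, select for each $n$ a $u_n$-calibrated ray starting at $x$ with initial velocity $w_n\in SX$, and pass to a further subsequence where $w_n\to w$ and $\om(u_n)\to\del\in S^1$. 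Continuity of $\phi_F^t$ and Lemma \ref{as dir cont rays} show that $c_w$ is a forward ray with $c_w(\infty)=\del$, and the identity $u_n(c_{w_n}(t))-u_n(x)=t$ passes to $u(c_w(t))-u(x)=t$, so $u\in\H_+(\del)$.

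Next the set equality $\mathrm{Lim\,sup}_n\,\J_+(u_n)=\J_+(u)$. The inclusion $\subset$ (upper semi-continuity) is a direct limit argument: if $v_{n_k}\to v$ with $v_{n_k}\in\J_+(u_{n_k})$, then $c_{v_{n_k}}(t)\to c_v(t)$ locally uniformly and the calibration identity passes to the limit, so $v\in\J_+(u)$. The inclusion $\supset$ is the main obstacle: given $v\in\J_+(u)$, picking arbitrary $w_n\in\J_+(u_n)\cap T_{\pi v}X$ (non-empty by condition (2) of the weak KAM definition) and taking a subsequential limit $w_n\to w$ only gives $w\in\J_+(u)\cap T_{\pi v}X$, a set that need not be a singleton at a non-differentiability point of $u$. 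To circumvent this, approximate $v$ by the translates $v^\e:=\phi_F^\e v\in\J_+(u)$ for small $\e>0$: by Lemma \ref{lemma fathi}, $u$ is differentiable at $\pi v^\e=c_v(\e)$, so $\J_+(u)\cap T_{\pi v^\e}X=\{v^\e\}$, and the naive argument at $\pi v^\e$ now forces $w_n^\e\to v^\e$ along a subsequence. Since $v^\e\to v$ as $\e\to 0$ and $\mathrm{Lim\,sup}_n$ is closed under sequential limits (a standard diagonal extraction), one concludes $v\in\mathrm{Lim\,sup}_n\J_+(u_n)$.

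Finally, \textbf{continuity of $\om$} follows by combining the previous step with Lemma \ref{as dir cont rays}: given $u_n\to u$, pick $v\in\J_+(u)$ and extract a subsequence with $v_{n_k}\in\J_+(u_{n_k})$ and $v_{n_k}\to v$, so that $\om(u_{n_k})=c_{v_{n_k}}(\infty)\to c_v(\infty)=\om(u)$. Since every subsequence of $\om(u_n)$ admits a further subsequence converging to $\om(u)$, the full sequence converges. The crux of the whole argument is the approximation along calibrated rays, which exploits Lemma \ref{lemma fathi} to compensate for the possible failure of the graph property of $\J_+(u)$ at points of non-differentiability.
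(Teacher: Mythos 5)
Your proposal is correct and follows essentially the same route as the paper: Arzel\`a--Ascoli via the equi-Lipschitz bound $u(y)-u(x)\leq c_F d_h(x,y)$, passing the calibration identity to the limit for the inclusion $\subset$, and for the inclusion $\supset$ pushing along the calibrated ray to $c_v(\e)$ where Lemma \ref{lemma fathi} forces the approximating vectors to converge to $\dot c_v(\e)$, then using closedness of the limit set. The continuity of $\om$ via Lemma \ref{as dir cont rays} also matches the paper's argument.
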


\begin{proof}
By $u(y)-u(x)\leq d_F(x,y)\leq c_F\cdot d_h(x,y)$ with $c_F$ as in the proof of Lemma \ref{existence horofunction}, all $u\in\H_+$ are equi-Lipschitz. It now follows from the Arzela-Ascoli Theorem that any sequence $u_n\in \H_+$ has a convergent subsequence with limit in $C^0(X)$. Assume $u_n\to u$. If $x,y\in X$, then
\[ u(y)-u(x) = \lim u_n(y)-u_n(x) \leq \lim d_F(x,y) = d_F(x,y) . \]
Let now $v_n\in\J_+(u_n)$, such that $v_n\to v$ for some $v\in SX$. Then by $|u_n(x)-u_n(y)|\leq c_F d_h(x,y)$ and $c_{v_n}(t)\to c_v(t)$, we have $u_n(c_v(t))-u_n(c_{v_n}(t))\to 0$ and hence
\begin{align*}
& u(c_v(t))-u(\pi v) = \lim u_n(c_v(t))-u_n(\pi v) \\
= ~&\lim u_n(c_v(t))-u_n(c_{v_n}(t)) + u_n(c_{v_n}(t)) -u_n(\pi v) + u_n(\pi v_n)- u_n(\pi v_n) \\
= ~&\lim u_n(c_{v_n}(t)) - u_n(\pi v_n) = t.
\end{align*}
Fixing $x=\pi v_n$ shows that for all $x$ we find a ray $c_v:[0,\infty)\to X$ with $\pi v=x$, such that $u(c_v(t))-u(x)=t$. Moreover, if we take a subsequence of $u_n$, such that $\om(u_n)\to \del$ for some $\del\in S^1$, then Lemma \ref{as dir cont rays} shows that $c_v(\infty)=\del$ for any so obtained $v$ and hence $u\in\H_+(\del)$.

The same arguments show that $\lim \J_+(u_n)\subset \J_+(u)$. Conversely, if $v\in \J_+(u)$, let $t>0$ and $v_n\in\J_+(u_n)$, such that $\pi v_n = c_v(t)$. Lemma \ref{lemma fathi} shows $\lim v_n=\dot c_v(t)$ and since $\lim \J_+(u_n)$ is closed, we have $v\in \lim \J_+(u)$, hence $\lim \J_+(u_n) = \J_+(u)$.

To show the continuity of $\om$, let $u_n\to u$, then $\om(u_n)=c_{v_n}(\infty)$ for any $v_n\in\J_+(u_n)$ and any limit $v$ of $\{v_n\}$ lies in $\J_+(u)\subset \RR_+(\om(u))$. Lemma \ref{as dir cont rays} shows that $\om(u_n)=c_{v_n}(\infty) \to c_v(\infty)=\om(u)$.
\end{proof}

Due to $\dim M=2$, we can always find special weak KAM solutions. They are linked to the bounding geodesics $c_\g^0,c_\g^1$ of $\M(\g)$ from Lemma \ref{bounding geodesics morse}.

\begin{prop}[Bounding weak KAM solutions] \label{bounding horofunctions}
Let $x_0\in X$. For each $\del\in S^1$, there exist two unique $u_+^0(\del),u_+^1(\del)\in\H_+(\del)\cap \{u(x_0)=0\}$ with the following property: for all sequences $u_n\in \H_+\cap \{u(x_0)=0\}$ with $\om(u_n)\to \del$ and $\om(u_n)\neq \del$, any $C_{loc}^0$ limit lies in $\{u_+^0(\del),u_+^1(\del)\}$. More precisely, assuming the counterclockwise orientation of $S^1$, we have
\[ \lim_{n\to\infty}u_n = \begin{cases} u_+^0(\del) & : \om(u_n) < \del \\ u_+^1(\del) & : \om(u_n) > \del \end{cases}. \]
Analogously, for $u_n\in \H_-$ with $\om(u_n)\to\del$ we have
\[ \lim_{n\to\infty}u_n = \begin{cases} u_-^1(\del) & : \om(u_n) < \del \\ u_-^0(\del) & : \om(u_n) > \del \end{cases}. \]
\end{prop}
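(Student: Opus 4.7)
The proof will have three parts: extracting subsequential limits by compactness, pinning down any such limit via a two-dimensional geometric argument, and concluding uniqueness via Corollary \ref{J=J' then u=u'}.

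For existence, I plan to invoke Lemma \ref{J(u) semi-cont}: the set $\H_+ \cap \{u(x_0)=0\}$ is sequentially compact in $C_{loc}^0$ and $\om$ is continuous. Given any sequence $u_n$ with $\om(u_n) \to \del$ and $\om(u_n) \neq \del$, after passing to a subsequence on which the approach is consistently one-sided (say $\om(u_n) < \del$ in the counterclockwise order of $S^1$), Lemma \ref{J(u) semi-cont} provides a $C_{loc}^0$-convergent further subsequence with limit $u \in \H_+(\del) \cap \{u(x_0) = 0\}$.

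The crucial step will be showing that this limit is independent of the sequence chosen from one fixed side. For each $x \in X$, set
\[ T_x(\del) := \{ v \in S_x X : c_v \text{ is a ray with } c_v(\infty) = \del \}. \]
Lemma \ref{as dir cont rays} shows $T_x(\del)$ is closed in the oriented circle $S_xX$. Exploiting $\dim X = 2$, two distinct rays from the common starting point $x$ cannot meet again --- a second crossing would, by strict convexity of $F$ and minimality, produce two distinct minimizing segments between common endpoints --- so as $v$ moves counterclockwise in $S_x X$, the rays $c_v$ sweep out $X$ without recrossing. This forces $v \mapsto c_v(\infty)$ to be monotonic (for the induced orientations), and $T_x(\del)$ to be a closed arc in $S_x X$ with well-defined clockwise-extremal endpoint $v_x^0(\del)$ and counterclockwise-extremal endpoint $v_x^1(\del)$. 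If $u$ is a $C_{loc}^0$-limit of $u_n$ with $\om(u_n) < \del$, I fix $x \in X$, pick $v_n \in \J_+(u_n) \cap T_xX$ and extract a convergent subsequence $v_n \to v$. Then $v \in T_x(\del)$ by Lemma \ref{as dir cont rays} and $v \in \J_+(u)$ by Lemma \ref{J(u) semi-cont}; monotonicity together with $c_{v_n}(\infty) = \om(u_n) < \del$ approaching $\del$ from the clockwise side forces $v = v_x^0(\del)$.

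This pins down $\J_+(u) \cap T_x X = \{v_x^0(\del)\}$ for $x$ on a full-measure set (where $u$ is differentiable, by Rademacher's theorem as in the proof of Corollary \ref{J=J' then u=u'}), so the (locally Lipschitz) set $\J_+(u)$ is the same for every approximating sequence from the $<\del$ side. Corollary \ref{J=J' then u=u'} and the normalization $u(x_0) = 0$ then determine $u$ uniquely; call it $u_+^0(\del)$. The $>\del$ side yields the symmetric $u_+^1(\del)$. The $\H_-$ statement follows by reversing time, and the swap between $u_-^0, u_-^1$ and the sides of approach reflects the orientation reversal inherent in passing from forward to backward rays. The main obstacle is establishing the arc structure of $T_x(\del)$ and the monotonicity of $v \mapsto c_v(\infty)$ --- this is where dimension 2 enters essentially, as in higher dimensions $S_xX$ is a sphere without a natural cyclic order and the dichotomy of ``two sides'' disappears.
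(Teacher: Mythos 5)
Your proposal is correct in substance but organizes the key two-dimensional step differently from the paper. Both proofs share the same frame: extract one-sided subsequential limits via the compactness and continuity of $\om$ in Lemma \ref{J(u) semi-cont}, and reduce uniqueness of the limit to equality of the calibrated sets plus Corollary \ref{J=J' then u=u'}. Where you diverge is in how the calibrated set of a one-sided limit is pinned down. The paper takes two limits $u,u'$ from the same side, picks $v\in\J_+(u)$, approximates $\dot c_v(t)$ by vectors $w_n\in\J_+(u_n')$ over the point $c_v(t)$, and rules out $w=\lim w_n\neq\dot c_v(t)$ by a direct application of Lemma \ref{crossing minimals} to the configuration of the rays $c_{v_n}$, $c_{w_n}$ (both with directions on the same side of $\del$) --- a two-case picture along a single calibrated ray. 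You instead front-load the planar topology into a fiberwise statement: the cyclic order of ray directions in $S_xX$ weakly matches the cyclic order of endpoints on $S^1$, so $T_x(\del)$ has extremal directions $v_x^i(\del)$ and any limit of calibrated vectors approaching from the clockwise side must be $v_x^0(\del)$; this identifies $\grad_Fu$ a.e.\ independently of the sequence. Your route is slightly more structural (it also produces the extremal vector fields $x\mapsto v_x^i(\del)$, which the paper only obtains implicitly via Proposition \ref{bounding calibrated}), at the cost of having to prove the monotonicity/arc claim, which you only sketch. One local repair is needed there: ``a second crossing would produce two distinct minimizing segments between common endpoints'' is not by itself a contradiction, since Finsler minimizers between two points need not be unique; the correct argument is that a second meeting point would yield a broken minimizer (concatenate $c_v|_{[0,s]}$ with the continuation of $c_{v'}$), contradicting that minimizers are smooth geodesics --- or simply an appeal to Lemma \ref{crossing minimals}, which the paper states precisely to exclude successive intersections of rays. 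With that fix, and with the observation that your argument only needs $v_x^0(\del)\in\J_+(u)$ for all $x$ together with Lemma \ref{lemma fathi} on the differentiability set, the proof goes through.
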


One can easily deduce, that for the forward Aubry set of direction $\del$ from Subsection \ref{section unstable} we have
\[ \A_+(\del) = \J_+(u^0_+(\del))\cap \J_+(u^1_+(\del)). \]

\begin{proof}
\begin{figure}[htb!]\centering
\includegraphics[scale=0.7]{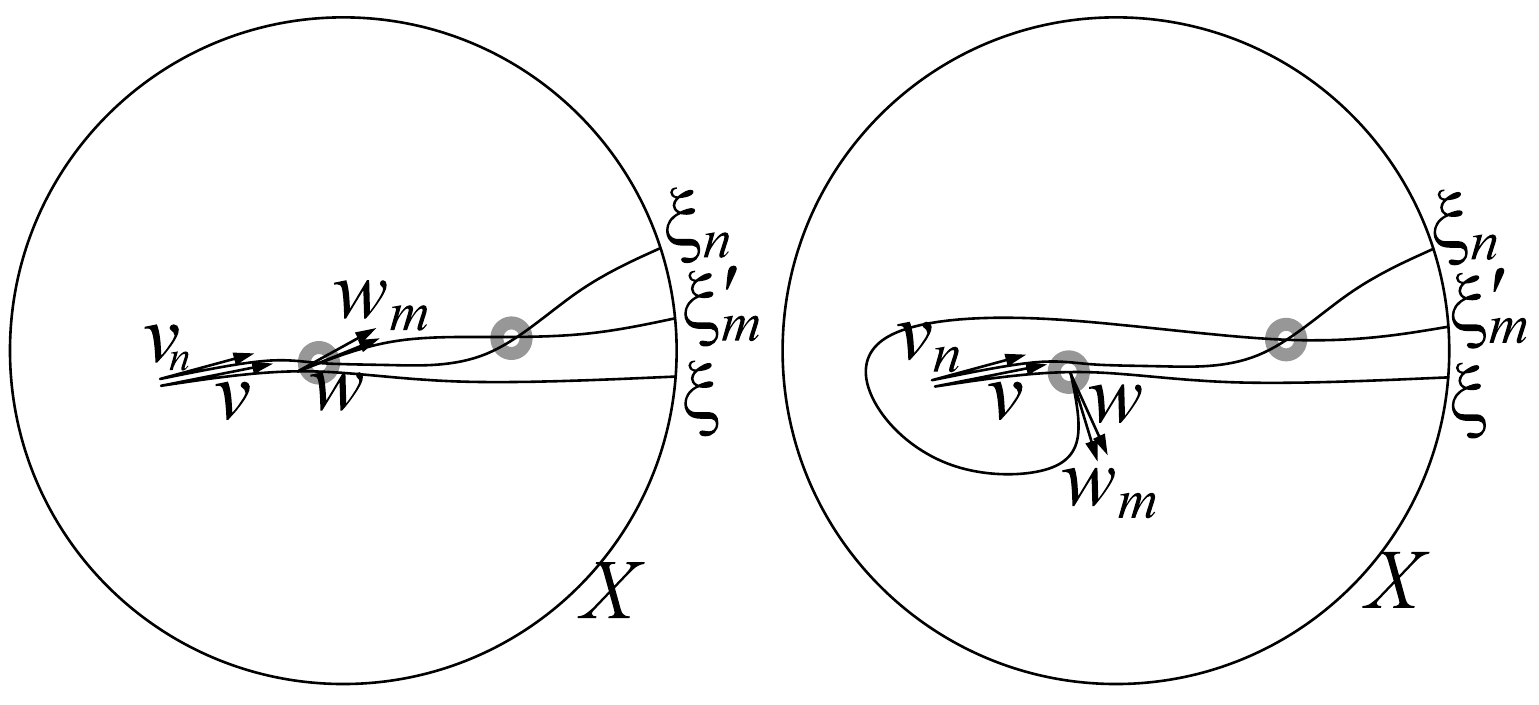}
\caption{The argument in the proof of Proposition \ref{bounding horofunctions}. Left: $w$ starts off $c_v$ to the left. Right: $w$ starts off $c_v$ to the right. The gray circles indicate the (almost) intersections that contradict Lemma \ref{crossing minimals}. \label{fig bounding_KAM}}
\end{figure}

Let $\del_n,\del_n'\to\del$ and assume that in the counterclockwise orientation of $S^1$, we have $\del_n,\del_n'> \del$. Choose any forward weak KAM solutions $u_n\in \om^{-1}(\del_n), u_n'\in \om^{-1}(\del_n')$ vanishing in a fixed reference point $x_0\in X$ and let $u, u'\in \H_+(\del)$ be limit functions of the $u_n, u_n'$, respectively (after passing to a subsequence, using continuity of $\om$ and compactness of $\H_+\cap\{u(x_0)=0\}$). We claim that $\J_+(u)=\J_+(u')$, which also shows $u=u'$ by Corollary \ref{J=J' then u=u'}. By symmetry, we need to show only $\J_+(u)\subset \J_+(u')$.

Let $v\in\J_+(u)$ and $t>0$. We claim $\dot c_v(t)\in\J_+(u')$, which shows $v\in \J_+(u')$, since $t>0$ is arbitrary and $\J_+(u')$ is closed. By Lemma \ref{J(u) semi-cont}, there exist $v_n\in\J_+(u_n)\subset \RR_+(\del_n)$ with $v_n\to v$. By $\pi\J_+(u_n')=X$, we find analogously $w_n\in\J_+(u_n')\subset\RR_+(\del_n')$ with $\pi w_n=c_v(t)$, converging to a vector $w\in\J_+(u')$. Using Lemma \ref{crossing minimals}, the sketches in Figure \ref{fig bounding_KAM} show a contradiction by considering two cases, if $w\neq \dot c_v(t)$.
\end{proof}

\begin{remark}
The above proof shows: If $v,v_n\in\RR_+$, such that $v_n\to v$ with $c_{v_n}(\infty)\neq c_v(\infty)=:\del$, then $v\in \J_+(u_+^0(\del))\cup \J_+(u_+^1(\del))$. One can think of $\J_+(u_+^0(\del))$ as the set of rightmost vectors in $\RR_+(\del)$, $\J_+(u_+^1(\del))$ as the leftmost vectors in $\RR_+(\del)$.
\end{remark}

We show how $u_+^i(\del)$ are related to the bounding geodesics in Lemma \ref{bounding geodesics morse}.

\begin{prop}\label{bounding calibrated}
If $\del\in S^1$, $\g\in\G_+(\del)$ and $i\in\{0,1\}$, then the bounding geodesic $c_\g^i$ is $u_-^i(\del)$- and $u_+^i(\del)$-calibrated.
\end{prop}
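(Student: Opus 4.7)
Focus on the forward statement $c_\g^0\in\J(u_+^0(\del))$; the case $i=1$ is symmetric (approaching from the opposite side of $\del$), and the backward calibrations by $u_-^i$ follow by the fully parallel argument applied to backward weak KAM solutions in $\H_-$ instead of $\H_+$. By Proposition~\ref{bounding horofunctions}, $u_+^0(\del)$ is the $C_{loc}^0$-limit of any sequence of normalized weak KAM solutions $u_n\in\H_+(\del_n)\cap\{u(x_0)=0\}$ with $\del_n\to\del$ and $\del_n<\del$ counterclockwise, and Lemma~\ref{J(u) semi-cont} then gives $\lim_n\J_+(u_n)=\J_+(u_+^0(\del))$. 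Hence it suffices to construct, for each $t_0\in\R$, a sequence $v_n^{t_0}\in\J_+(u_n)$ with $\pi v_n^{t_0}=c_\g^0(t_0)$ and $v_n^{t_0}\to\dot c_\g^0(t_0)$, since this forces $\dot c_\g^0(t_0)\in\J_+(u_+^0(\del))$ at every $t_0$.

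Using $\pi\J_+(u_n)=X$, pick $v_n^{t_0}\in\J_+(u_n)$ with $\pi v_n^{t_0}=c_\g^0(t_0)$; the corresponding $u_n$-calibrated ray $c_{v_n^{t_0}}$ is then a minimal ray from $c_\g^0(t_0)$ to $\del_n$. Extract a convergent subsequence $v_n^{t_0}\to v_*^{t_0}$; Lemma~\ref{as dir cont rays} gives $c_{v_*^{t_0}}(\infty)=\del$, and Lemma~\ref{partial instability bounding} forces the limit ray $c_{v_*^{t_0}}$ to lie entirely in the closed strip $S$ between $c_\g^0$ and $c_\g^1$. The crux is to show $v_*^{t_0}=\dot c_\g^0(t_0)$. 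Suppose not. Since $c_\g^0$ is the right boundary of $S$ (our convention from Lemma~\ref{bounding geodesics morse}) and $c_{v_*^{t_0}}\subset S$ is distinct from $c_\g^0|_{[t_0,\infty)}$, the ray $c_{v_*^{t_0}}$ enters $\Int S$ for small positive time (strictly left of $c_\g^0$), and so does $c_{v_n^{t_0}}$ for large $n$. On the other hand, $\del_n<\del$ places $\del_n$ in the connected component of $S^1\setminus\{\g(-\infty),\del\}$ lying to the right of $\bar S$, and since $\bar S\cap S^1=\{\g(\pm\infty)\}$, the Morse Lemma forces $c_{v_n^{t_0}}$ to eventually leave $\bar S$ through $c_\g^0$, producing a second intersection $c_\g^0(t_0+s_n')$, $s_n'>0$. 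Combining this two-point intersection with Lemma~\ref{crossing minimals}---applied to sequences of perturbing geodesics meeting $c_\g^0$---yields a contradiction: any bounded limit of $s_n'$ would give a transverse intersection of $c_{v_*^{t_0}}$ with $c_\g^0$ at positive time, while $s_n'\to\infty$ together with the Morse estimate forces $c_{v_n^{t_0}}$ to accumulate on $c_\g^0$, contradicting $v_*^{t_0}\neq\dot c_\g^0(t_0)$. Hence $v_*^{t_0}=\dot c_\g^0(t_0)$, and since every convergent subsequence shares this limit, the full sequence converges.

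This yields $\dot c_\g^0(t_0)\in\J_+(u_+^0(\del))$ for every $t_0\in\R$, i.e.\ the required $u_+^0(\del)$-calibration of $c_\g^0$; the case $c_\g^1$ with $u_+^1(\del)$ is symmetric (approximation from $\del_n>\del$), and the backward calibrations by $u_-^i$ are obtained by the completely parallel construction for backward weak KAM solutions and backward rays. The main obstacle is the identification step in the previous paragraph: one must carefully reconcile the one-sided geometric placement of the approximating rays $c_{v_n^{t_0}}$ (inherited from the position of $\del_n$ on the right of $\bar S$ via the Morse Lemma) with the strip confinement $c_{v_*^{t_0}}\subset S$ from Lemma~\ref{partial instability bounding}, while invoking Lemma~\ref{crossing minimals} to rigorously rule out limit vectors $v_*^{t_0}$ that point nontrivially into $\Int S$.
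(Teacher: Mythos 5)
Your proof is correct and follows essentially the same route as the paper's: approximate from one side with directions $\del_n$, identify the limit vector as $u_+^i(\del)$-calibrated via Proposition~\ref{bounding horofunctions} and Lemma~\ref{J(u) semi-cont}, and pin it down to $\dot c_\g^i$ using Lemma~\ref{partial instability bounding} together with the no-successive-intersection principle of Lemma~\ref{crossing minimals}. The only cosmetic difference is that the paper derives the contradiction from the limit ray being forced \emph{outside} the strip $S$ (on the side of the $\del_n$), whereas you confine the limit to $S$ and instead make the approximating rays double-cross $c_\g^0$ on their way out to $\del_n$; both are valid and your bounded/unbounded case split for $s_n'$ is unnecessary, since a single transverse re-intersection already contradicts Lemma~\ref{crossing minimals}.
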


\begin{proof}
E.g. for $x=c_\g^1(t)$, consider any sequence of $v_n\in \RR_+(\del_n)\cap T_xX$, where $\del_n\to \del$, such that $\del_n>\del$ in the counterclockwise orientation of $S^1$. Proposition \ref{bounding horofunctions} and Lemma \ref{J(u) semi-cont} show that $v:= \lim v_n\in \J(u_+^1(\del))$ and if $v\neq \dot c_\g^1(t)$, $c_v$ has to lie asymptotically left of $c_\g^1$, which is prohibited by Lemma \ref{partial instability bounding}.
\end{proof}

We close by the following nice property of the bounding weak KAM solutions. Assume that $F$ is invariant under $\Gamma$ and let $p:X\to M$ be the covering map with differential $p_*:TX\to TM$. Write $\Om$ for the non-wandering set of $\phi_F^t$ in $p_*\M\subset SM$.

\begin{cor}
 The non-wandering set $\Om$ is contained in
\[ \bigcup_{\del\in S^1} \bigcup_{i=0,1} p_*\J(u_+^i(\del)) . \]
\end{cor}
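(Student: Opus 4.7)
The plan is to lift $\bar v\in\Om$ to $v\in \M\subset SX$ (using $\Gamma$-invariance of $\M$) and set $\del^\pm:=c_v(\pm\infty)$. By the definition of non-wandering in $SM$ together with $\Gamma$-equivariance of the geodesic flow, I choose sequences $v_n\to v$ in $SX$, deck transformations $\tau_n\in\Gamma$ and times $t_n\to\infty$ with $w_n := d\tau_n\,\phi_F^{t_n}v_n\to v$. Then every $w_n\in\M$, and Lemma \ref{as dir cont rays} gives $c_{w_n}(\pm\infty)=\tau_n c_{v_n}(\pm\infty)\to \del^\pm$.

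For the core step I produce a forward weak KAM solution that calibrates $c_v$ on all of $\R$. Let $u_n$ be the normalized Busemann function of the ray $c_{w_n}|_{[0,\infty)}$; since $c_{w_n}$ is a full minimal geodesic, the computation in the proof of Lemma \ref{existence horofunction} actually shows that $u_n$ calibrates $c_{w_n}$ on all of $\R$, not just on $[0,\infty)$. Passing to a subsequence, $u_n\to u\in\H_+(\del^+)$ in $C^0_{loc}$, and Lemma \ref{J(u) semi-cont} yields $\phi_F^{-s}v\in\J_+(u)$ for every $s\geq 0$, so $v\in\J(u)$.

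The next step is to identify $u$ with one of $u_+^0(\del^+),u_+^1(\del^+)$. Generically, $\om(u_n)=\tau_n c_{v_n}(\infty)\neq \del^+$ along a subsequence and, after further subsequencing, all these directions lie on a fixed side of $\del^+$ in $S^1$; Proposition \ref{bounding horofunctions} then forces $u=u_+^i(\del^+)$ for the corresponding $i$, whence $v\in\J(u_+^i(\del^+))$. If instead $\tau_n c_{v_n}(\infty)=\del^+$ eventually but $\tau_n c_{v_n}(-\infty)\neq \del^-$ infinitely often on one side, the background geodesics $\g_n$ of $c_{w_n}$ converge to $\g$ from that side while sharing $\del^+$; both bounding geodesics $c_{\g_n}^0,c_{\g_n}^1$ of $\M(\g_n)$ then collapse to the same bounding geodesic $c_\g^i$ of $\M(\g)$, so $w_n\in\M(\g_n)$ lying between them with $w_n\to v$ forces $v=\dot c_\g^i(s)$ for some $s\in\R$, and Proposition \ref{bounding calibrated} delivers $v\in\J(u_+^i(\del^+))$.

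The remaining subcase, where both $\tau_n c_{v_n}(\pm\infty)$ stabilize to $\del^\pm$, is the main obstacle. Here discreteness of $\Gamma$, together with $\tau_n\neq\id$ and the Morse Lemma applied to $\tau_n c_{v_n}(t_n)\to\pi v$ with $t_n\to\infty$, forces $\g$ to be the axis of a non-trivial $\tau\in\Gamma$ (the $\tau_n$ are necessarily diverging translations along $\g$). Theorem \ref{morse periodic} then classifies $\M(\g)=\M_{per}(\g)\cup\M_+(\g)\cup\M_-(\g)$; since distinct $\tau$-invariant minimal geodesics in $\M_{per}(\g)$ lie in distinct $\Gamma$-orbits (the centralizer in $\Gamma$ of the hyperbolic $\tau$ is cyclic), the heteroclinic orbits in $\M_\pm(\g)$ project to wandering orbits in $SM$, so $\bar v\in\Om$ forces $v\in\M_{per}(\g)$. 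Corollary \ref{width periodic} then places $v\in\A_+(\del^+)$, and Theorem \ref{morse periodic} (3) (rays from $\pi\M_{per}(\g)$ asymptotic to $\del^+$ must themselves be periodic minimal) shows $\phi_F^{-s}v\in\A_+(\del^+)\subset\J_+(u_+^i(\del^+))$ for all $s\geq 0$, promoting to $v\in\J(u_+^i(\del^+))$.
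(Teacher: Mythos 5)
Your first step and your case (i) reproduce the paper's core argument and are correct: the Busemann functions of the full minimal geodesics $c_{w_n}$ calibrate on all of $\R$, their $C^0_{loc}$ limit $u$ satisfies $v\in\J(u)$, and if $\om(u_n)\neq\del^+$ along a subsequence then Proposition \ref{bounding horofunctions} forces $u=u_+^i(\del^+)$. The gap is in how you dispose of the complementary case. In case (iii) you assert that $\tau_n c_{v_n}(\pm\infty)=\del^\pm$ forces $\g$ to be an axis. It does not: this equality only says $c_{v_n}\in\M(\tau_n^{-1}\g)$, hence $\tau_n^{-1}\g\to\g$, i.e.\ $\g$ is accumulated by a diverging sequence of its own translates --- exactly what happens for every recurrent non-periodic $\g$. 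Concretely, take $F=g_h$ and $\g$ with dense backward orbit; choose $\sigma_n\in\Gamma$ and $r_n\to\infty$ with $d\sigma_n\phi_F^{-r_n}v\to v$ and set $v_n:=d\sigma_n\phi_F^{-r_n}v$, $\tau_n:=\sigma_n^{-1}$, $t_n:=r_n$. Then $w_n=v$ for all $n$, you are squarely in your case (iii), and $\g$ is not an axis, so the periodic classification you invoke is unavailable. Your case (ii) likewise rests on the unproved assertion that $c_{\g_n}^0$ and $c_{\g_n}^1$ collapse to one and the same bounding geodesic of $\M(\g)$; when $\M(\g)$ contains a strip of positive width this is not clear (the rightmost and leftmost geodesics of $\M(\g_n)$ may be forward asymptotic to different periodic geodesics inside the strip).

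The repair is the paper's one-line observation: look at the \emph{other} approximating sequence. If $c_{w_n}(\infty)=\tau_n c_{v_n}(\infty)=\del^+$ for all large $n$, then $c_{v_n}(\infty)=\tau_n^{-1}\del^+$, and since the $\tau_n$ are non-trivial (they must diverge because $t_n\to\infty$), these can all equal $\del^+$ only if $\del^+$ is fixed by some $\tau\in\Gamma-\{\id\}$. So either $\del^+$ is a fixed point of $\Gamma$ --- the genuinely periodic case, to be handled by Subsection \ref{section periodic} together with Proposition \ref{bounding calibrated} and Corollary \ref{width periodic} --- or, along a subsequence, $c_{v_n}(\infty)\neq\del^+$, and your case (i) argument runs verbatim with the Busemann functions of the full minimal geodesics $c_{v_n}\to c_v$ in place of those of $c_{w_n}$. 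This eliminates your cases (ii) and (iii) entirely. Even in the periodic case you should be more careful: the claim that the heteroclinics in $\M_\pm(\g)$ are wandering in $p_*\M$ is asserted rather than proved, and the cleaner route is to show directly which of the sets $\J(u_+^i(\del^+))$ contains them, or to use $\M_{per}(\g)\subset\A_+(\del^+)\subset\J_+(u_+^i(\del^+))$ from Corollary \ref{width periodic}.
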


Hence, if one is interested only in recurrent dynamics (e.g. in the supports of invariant measures), one can restrict the attention to the dynamics in the above set, which in the universal cover consists of only two $\phi_F^t$-invariant graphs over $0_X$ for each direction $\del\in S^1$. In another paper \cite{minimal_preprint}, we exploited this idea to calculate the topological entropy of $\phi_F^t|_\M$.

\begin{proof}
 If $v\in\Om$, then there exist by definition $w_n\in p_*\M$ and $t_n\to\infty$, such that $w_n,\phi_F^{t_n}w\to v$ in $SM$. Lifted to $X$, we find $\tau_n\in\Gamma$, such that $\tilde w_n$ and $(\tau_n)_*\phi_F^{t_n}\tilde w_n$ converge to $\tilde v$. If the asymptotic direction $\del:=c_{\tilde v}(\infty)$ is a fixed point of $\Gamma$, then $\tilde v$ belongs to the set above by the results in Subsecction \ref{section periodic}. In the other case, if $\del$ is not fixed by $\Gamma$, the asymptotic directions of $c_{\tilde w_n}$ and $c_{(\tau_n)_*\phi_F^{t_n}\tilde w_n}$ cannot all be equal to $\del$ by the presence of the non-trivial $\tau_n$. Proposition \ref{bounding horofunctions} now proves the claim.
\end{proof}


\begin{thebibliography}{99}


\bibitem[Ban94]{bangert1} V. Bangert -- \emph{Geodesic rays, Busemann functions and monotone twist maps}. Calculus of Variations and Partial Differential Equations 2.1 (1994), 49-63.

\bibitem[CIPP98]{contreras1} G. Contreras, R. Iturriaga, G. P. Paternain, M. Paternain -- \emph{Lagrangian graphs, minimizing measures and \mane's critical values}. Geometric and Functional Analysis 8 (1998), 788-809.

\bibitem[CS14]{coudene} Y. Coud\'ene, B. Schapira -- \emph{Generic measures for geodesic flows in non-positively curved manifolds}. arXiv:1401.5282v1 [math.DS] (2014).

\bibitem[Fat08]{fathi} A. Fathi -- \emph{Weak KAM theorem in Lagrangian dynamics, preliminary version number 10}. Preprint (2008), available online at \url{http://www.math.u-bordeaux1.fr/~pthieull/Recherche/KamFaible/publications.html}.

\bibitem[HM42]{morse_hedlund} G. A. Hedlund, H. M. Morse -- \emph{Manifolds without conjugate points}. Transactions of the American Mathematical Society 51 (1942), 362-386.

\bibitem[Hed32]{hedlund} G. A. Hedlund -- \emph{Geodesics on a two-dimensional Riemannian manifold with periodic coefficients}. Annals of Mathematics 33.4 (1932), 719-739.

\bibitem[Kli71]{klingenberg} W. Klingenberg -- \emph{Geod\"atischer Flu{\ss} auf Mannigfaltigkeiten vom hyperbolischen Typ}. Inventiones Mathematicae 14 (1971), 63-82.

\bibitem[KOS13]{minimal_preprint} G. Knieper; C. Ogouyandjou; J. P. Schr\"oder -- \emph{Topological entropy of minimal geodesics and volume growth on surfaces}. arXiv:1308.2127 [math.DG] (2013).

\bibitem[Mor24]{morse} H. M. Morse -- \emph{A fundamental class of geodesics on any closed surface of genus greater than one}. Transactions of the American Mathematical Society 26.1 (1924), 25-60.

\bibitem[Sch14]{paper1} J. P. Schr\"oder -- \emph{Global minimizers for Tonelli Lagrangians on the 2-torus}. Preprint (2014), available online at \url{http://www.ruhr-uni-bochum.de/ffm/Lehrstuehle/Lehrstuhl-X/jan.html}.

\bibitem[Sor10]{sorrentino} A. Sorrentino -- \emph{Lecture notes on Mather's theory for Lagrangian systems}. arXiv: 1011.0590 [math.DS] (2010).

\bibitem[Wal00]{walters} P. Walters -- \emph{An introduction to ergodic theory}. Graduate Texts in Mathematics 79, Springer Verlag (2000).

\bibitem[Zau62]{zaustinsky} E. M. Zaustinsky -- \emph{Extremals on compact {\it E}-surfaces}. Transactions of the American Mathematical Society 102.3 (1962), 433-445.

\end{thebibliography}

\end{document}